\documentclass[12pt,a4paper]{amsart}
\usepackage{esint}
\usepackage{amssymb}
\usepackage{MnSymbol}
\usepackage{esint}
\usepackage{comment}
\usepackage{graphicx}

\usepackage[
hypertexnames=false, colorlinks, 
linkcolor=black,citecolor=black,urlcolor=black, linktocpage=true
]%
{hyperref} 
\hypersetup{bookmarksdepth=3}

	\setlength{\textwidth}{15.5cm}			  %
	\setlength{\textheight}{22cm}			  %
	\setlength{\topmargin}{-.5cm}			  %
	\setlength{\oddsidemargin}{6mm}			  %
	\setlength{\evensidemargin}{6mm}		  %
	\setlength{\abovedisplayskip}{3mm}		  %
	\setlength{\belowdisplayskip}{3mm}		  %
	\setlength{\abovedisplayshortskip}{0mm}	  %
	\setlength{\belowdisplayshortskip}{2mm}	  %
	\setlength{\baselineskip}{12pt}			  %
	\setlength{\normalbaselineskip}{12pt}	  %
	\normalbaselines						  %

\newtheorem{thm}{Theorem}[section]
\newtheorem{lm}[thm]{Lemma}
\newtheorem{cor}[thm]{Corollary}
\newtheorem{prop}[thm]{Proposition}

\theoremstyle{definition}

\newtheorem*{df*}{Definition}

\theoremstyle{remark}
\newtheorem{rem}[thm]{Remark}
\newtheorem*{rem*}{Remark}

\numberwithin{equation}{section}

\newcommand{\ci}[1]{_{ {}_{\scriptstyle #1}}}
\newcommand{\ti}[1]{_{\scriptstyle \text{\rm #1}}}


\newcommand{\dd}{{\mathrm{d}}}

\newcommand{\mathd}{\mathrm{d}}

\newcommand{\cD}{\mathcal{D}}

\newcommand{\cX}{\mathcal{X}}

\newcommand{\cH}{\mathcal{H}}

\newcommand{\cW}{\mathcal{W}}
\newcommand{\cV}{\mathcal{V}}

\newcommand{\C}{\mathbb{C}}
\newcommand{\R}{\mathbb{R}}
\newcommand{\Z}{\mathbb{Z}}

\newcommand{\Q}{\mathbb{Q}}

\newcommand{\cU}{\mathcal{U}}

\newcommand{\ch}{\operatorname{ch}}

\newcommand{\1}{\mathbf{1}}

\newcommand{\wt}{\widetilde}

\newcommand{\cz}{Calder\'{o}n--Zygmund\ }

\newcommand{\La}{\langle }
\newcommand{\Ra}{\rangle }

\newcommand{\bs}[1]{\boldsymbol{#1}}
\newcommand{\bfD}{\bs{\mathcal{D}}}

\newcommand{\fdot}{\,\cdot\,}

\newcommand{\supp}{\operatorname{supp}}

\DeclareMathOperator*{\esssup}{ess\,sup}

\newcommand{\dist}{\mathrm{dist}}
\newcommand{\fourier}[1]{\mathcal{F}\left[{#1}\right]}

\def\cyr{\fontencoding{OT2}\fontfamily{wncyr}\selectfont}
\DeclareTextFontCommand{\textcyr}{\cyr}


\newenvironment{entry}
{\begin{list}{X}%
		{%
			\setlength{\labelwidth}{55pt}%
			\setlength{\leftmargin}{\labelwidth}
			\addtolength{\leftmargin}{\labelsep}%
			\setlength{\itemsep}{.4pc}
	}%
}%
{\end{list}}

\title{Matrix-weighted little BMO spaces in two parameters}
\author{S.~Kakaroumpas and O.~Soler i Gibert}


\begin{document}

\begin{abstract}
In this paper we set up a theory of two-matrix weighted little BMO in two parameters. We prove that being a member of this class is equivalent to belonging uniformly in each variable to two-matrix weighted (one-parameter) BMO, a class studied extensively by J.~Isralowitz, S.~Pott, S.~Treil and others. Using this equivalence, we deduce lower and upper bounds in terms of the two-matrix weighted little BMO norm of the symbol for the norm of commutators with Journ\'e operators. 
\end{abstract}

\maketitle

\setcounter{tocdepth}{1}
\tableofcontents
\setcounter{tocdepth}{2}

\section*{Notation}

\begin{entry}
\item[$\1\ci{E}$] characteristic function of a set $E$;

\item[$\mathd x$] integration with respect to Lebesgue measure; 

\item[$|E|$] $n$-dimensional Lebesgue measure of a measurable set $E\subseteq\R^n$;

\item[$\La f\Ra\ci{E}$] average with respect to Lebesgue measure, $\La f\Ra\ci{E}:=\frac{1}{|E|}\int_{E}f(x)\mathd x$;

\item[$\strokedint_{E} \mathd x$] integral with respect to the normalized Lebesgue measure on a set $E$ of positive finite measure, $\strokedint_{E} f(x)\mathd x := \frac{1}{|E|} \int_{E} f(x)\mathd x = \La f\Ra\ci{E}$;

\item[$|e|$] usual Euclidean norm of a vector $e\in\C^d$;

\item[$\La e,f\Ra$] usual Euclidean pairing of vectors $e,f\in\C^d$;

\item[$M_{d}(\C)$] set of all $d\times d$-matrices with complex entries;

\item[$|A|$] usual matrix norm (i.e.~largest singular value) of a matrix $A\in M_{d}(\C)$;

\item[$I_{d}$] the identity $d\times d$-matrix;

\item[$A^{\ast}$] adjoint (complex-conjugate transpose) matrix to a matrix $A\in M_{d}(\C)$;

\item[$\Vert f \Vert\ci{\cX}$] norm of the element $f$ in a Banach space $\cX;$

\item[$\mu(E)$] $\mu(x)\mathd x$-measure of a set $E$, $\mu(E):=\int_{E}\mu(x)\mathd x$;

\item[$L^{p}(W)$] matrix-weighted Lebesgue space, $\|f\|\ci{L^p(W)}^p := \int|W(x)^{1/p}f(x)|^p \mathd x$;

\item[$(f,g)$] usual $L^2$-pairing, $(f,g) := \int \La f(x),g(x)\Ra \mathd x$, where $f,g$ take values in $\C^d$;

\item[$p'$] H\"{o}lder conjugate exponent to $p$, $1/p+1/p'=1$;

\item[$W'$] Muckenhoupt dual weight to weight $W$ corresponding to exponent $p$, $W':=W^{-p'/p}=W^{-1/(p-1)}$ (the exponent $p$ will always be clear from the context); 

\item[$I_{-},\,I_{+}$] left, respectively right half of an interval $I\subseteq\R$.
\end{entry}

The notation $x\lesssim\ci{a,b,\ldots}  y$ means $x\leq Cy$ with a constant $0<C<\infty$ depending \emph{only} on the quantities $a, b, \ldots$; the notation $x\gtrsim\ci{a,b,\ldots} y$ means $y\lesssim\ci{a,b,\ldots} x.$
We use  $x\sim\ci{a,b,\ldots} y$ if \emph{both} $x\lesssim\ci{a,b,\ldots} y$ and $x\gtrsim\ci{a,b,\ldots} y$ hold.

\section{Introduction}
The study of characterizations of the space $\text{BMO}(\R^n)$ is
a classical topic of interest due to the role of this space as a natural substitute
of the space $L^\infty$ in many areas of analysis.
Recall that the space $\text{BMO}(\R^n)$ consists of all locally integrable functions $b$ on $\R^n$ such that
\begin{equation*}
\Vert b\Vert\ci{\text{BMO}(\R^n)}:=\sup_{Q}\strokedint_{Q}|b(x)-\La b\Ra\ci{Q}|\dd x<\infty,
\end{equation*}
where the supremum is taken over all cubes $Q$ with sides parallel to the coordinate axes in $\R^n$, and where $\langle b \rangle_Q$ denotes the average of $b$ over $Q$. 
One such result is a theorem of Z.~Nehari~\cite{nehari},
which states that the norm of the Hankel operator $\mathrm{H}_b$ with symbol $b$
acting on the Hardy space $\cH^2$ is equivalent to the $\text{BMO}$ norm of $b.$
The real variable version of this theorem is the commutator theorem~\cite{coifman-rochberg-weiss} of
R.~R.~Coifman, R.~Rochberg and G.~Weiss, which relates
the norm of commutators between Riesz transforms and multiplication by a function $b$
with the $\text{BMO}$ norm of $b.$

Since then, the knowledge on $L^p$ boundedness of commutators between
general Calderón-Zygmund operators and multiplication by a function
and its relation to different versions of the space $\text{BMO}$ has been greatly extended.
In this article, we will consider various generalizations of the space $\text{BMO}$
at the same time.

The main goal of this paper is to develop a theory of two matrix-weighted little BMO. Let $1<p<\infty$. Consider two biparameter $d\times d$-matrix valued $A_p$ weights $U,V$ on $\R^n\times\R^m$. Let $B\in L^1\ti{loc}(\R^{n+m};M_{d}(\C)),$ that is a locally integrable function on $\R^{n+m}$ that takes values on the complex $d \times d$ matrices, and where by locally integrable we mean that the matrix norm $|B(x)|$ is a locally integrable real valued function. We define the two-matrix weighted little BMO norm $\Vert B\Vert\ci{\text{bmo}(U,V,p)}$ by
\begin{equation*}
\Vert B\Vert\ci{\text{bmo}(U,V,p)}:=\sup_{R}\left(\strokedint_{R}|V(x)^{1/p}(B(x)-\La B\Ra\ci{R})\cU\ci{R}^{-1}|^{p}\mathd x\right)^{1/p},
\end{equation*}
where the supremum is taken over all rectangles $R$ in $\R^n\times\R^m$ (with sides parallel to the coordinate axes), and all reducing operators are taken with respect to exponent $p$ (see Section~\ref{subsec:ReducingOperators} for definitions). We also define
\begin{equation*}
\Vert B\Vert\ci{\wt{\text{bmo}}(U,V,p)}:=\sup_{R}\strokedint_{R}|\cV\ci{R}(B(x)-\La B\Ra\ci{R})\cU\ci{R}^{-1}|\mathd x,
\end{equation*}
where the supremum is taken over all rectangles $R$ in $\R^n\times\R^m$ (with sides parallel to the coordinate axes), and all reducing operators are taken with respect to exponent $p$.
Our first result states that both resulting spaces are equivalent.

\begin{thm}
\label{thm:equivalences little bmo}
Let $p,U,V,B$ be as above. Then, there holds
\begin{equation*}
\Vert B\Vert\ci{\emph{bmo}(U,V,p)}\sim\Vert B\Vert\ci{\wt{\emph{bmo}}(U,V,p)},
\end{equation*}
where all implied constants depend only on $n,m,d,p,[U]\ci{A_p(\R^m\times\R^n)}$ and $[V]\ci{A_p(\R^m\times\R^n)}$.
\end{thm}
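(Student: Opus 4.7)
The plan is to prove the two inequalities separately, using operator-norm submultiplicativity $|AB|\leq|A||B|$ to pass between the pointwise weight $V(x)^{1/p}$ and the $R$-reducing operator $\cV\ci{R}$, with the matrix $A_p$ condition accounting for the discrepancy.

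\textbf{Easy direction ($\wt{\text{bmo}}\lesssim \text{bmo}$).} Fix a rectangle $R$, insert $V(x)^{-1/p}V(x)^{1/p}$ and use
\begin{equation*}
|\cV\ci{R}(B(x)-\La B\Ra\ci{R})\cU\ci{R}^{-1}| \leq |\cV\ci{R}V(x)^{-1/p}|\cdot|V(x)^{1/p}(B(x)-\La B\Ra\ci{R})\cU\ci{R}^{-1}|,
\end{equation*}
then average over $R$ and apply Hölder with exponents $p',p$. Writing $V(x)^{-1/p}=V'(x)^{1/p'}$ with $V':=V^{-p'/p}\in A_{p'}$, the reducing operator identity for $V'$ at exponent $p'$ applied to the columns of $\cV\ci{R}$, together with the Frobenius--operator norm equivalence, gives
\begin{equation*}
\strokedint_{R}|\cV\ci{R}V(x)^{-1/p}|^{p'}\dd x \;\sim\; |\cV'\ci{R}\cV\ci{R}|^{p'},
\end{equation*}
where $\cV'\ci{R}$ is the reducing operator of $V'$ at exponent $p'$. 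This last quantity is controlled by the matrix $A_p$ characteristic of $V$, closing this direction.

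\textbf{Harder direction ($\text{bmo}\lesssim \wt{\text{bmo}}$).} Symmetrically,
\begin{equation*}
|V(x)^{1/p}(B(x)-\La B\Ra\ci{R})\cU\ci{R}^{-1}| \leq |V(x)^{1/p}\cV\ci{R}^{-1}|\cdot|\cV\ci{R}(B(x)-\La B\Ra\ci{R})\cU\ci{R}^{-1}|.
\end{equation*}
Invoking a reverse Hölder inequality for the matrix $A_p$ weight $V$, I would find $s>1$ depending only on $n,m,d,p$ and $[V]\ci{A_p}$ such that $(\strokedint_{R}|V(x)^{1/p}\cV\ci{R}^{-1}|^{ps}\dd x)^{1/(ps)}\lesssim 1$. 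Hölder with exponents $s,s'$ then yields
\begin{equation*}
\left(\strokedint_{R}|V(x)^{1/p}(B(x)-\La B\Ra\ci{R})\cU\ci{R}^{-1}|^{p}\dd x\right)^{1/p} \lesssim \left(\strokedint_{R}|\cV\ci{R}(B(x)-\La B\Ra\ci{R})\cU\ci{R}^{-1}|^{ps'}\dd x\right)^{1/(ps')},
\end{equation*}
reducing the problem to an $L^{ps'}\to L^{1}$ self-improvement for $\wt{\text{bmo}}$.

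\textbf{Main obstacle.} The self-improvement step is the crux: since $\cV\ci{R},\cU\ci{R}$ vary with $R$, the scalar function $|\cV\ci{R}(B-\La B\Ra\ci{R})\cU\ci{R}^{-1}|$ changes with $R$ and the classical scalar John--Nirenberg theorem does not apply verbatim. I plan to handle this by comparing reducing operators across dyadic subrectangles: for $R'\subseteq R$ of comparable scale, the matrix $A_p$ condition bounds $|\cV\ci{R}\cV\ci{R'}^{-1}|$ and $|\cU\ci{R'}\cU\ci{R}^{-1}|$, so $\cV\ci{R},\cU\ci{R}$ may be swapped with $\cV\ci{R'},\cU\ci{R'}$ at bounded cost, at the price of an extra oscillation $|\cV\ci{R}(\La B\Ra\ci{R'}-\La B\Ra\ci{R})\cU\ci{R}^{-1}|$ which is itself controlled by $\Vert B\Vert\ci{\wt{\text{bmo}}(U,V,p)}$. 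A good-$\lambda$ argument or a Calderón--Zygmund decomposition adapted to biparameter rectangles should then deliver the required $L^q$ improvement for all finite $q$.
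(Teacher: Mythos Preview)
Your easy direction ($\wt{\text{bmo}}\lesssim\text{bmo}$) is correct and identical to the paper's Lemma~3.3.

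For the hard direction the paper takes a completely different route that avoids any biparameter John--Nirenberg. It establishes a two-sided slicing characterization: $\Vert B\Vert_{\text{bmo}(U,V,p)}$ is comparable to $\max\big(\esssup_{x_2}\Vert B(\cdot,x_2)\Vert_{\text{BMO}(U_{x_2},V_{x_2},p)},\ \esssup_{x_1}\Vert B(x_1,\cdot)\Vert_{\text{BMO}(U_{x_1},V_{x_1},p)}\big)$. Lemma~3.2 shows that the sliced one-parameter $\text{BMO}$ norms dominate $\text{bmo}$; Lemma~3.4 (the delicate part, a Lebesgue-differentiation argument tracking reducing operators along rectangles shrinking in one variable) shows that $\wt{\text{bmo}}$ dominates the sliced one-parameter $\wt{\text{BMO}}$ norms; and the one-parameter equivalence $\wt{\text{BMO}}\sim\text{BMO}$ is then imported wholesale from Isralowitz--Pott--Treil. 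The continuous statement is deduced from the dyadic one via the $1/3$-trick.

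Your plan leaves a genuine gap at the self-improvement step. The sentence ``a good-$\lambda$ argument or a Calder\'on--Zygmund decomposition adapted to biparameter rectangles should then deliver the required $L^q$ improvement'' is exactly where the difficulty lives. Two concrete obstructions: (i) dyadic rectangles are not totally ordered by inclusion, so there is no clean notion of maximal stopping subrectangle and no pairwise disjoint Calder\'on--Zygmund family; (ii) the reducing operators change with the rectangle, and $|\cV\ci{R}\cV\ci{R'}^{-1}|\cdot|\cU\ci{R'}\cU\ci{R}^{-1}|$ is \emph{not} uniformly bounded over all $R'\subseteq R$ (only over those with $|R'|\sim|R|$), so the oscillation hypothesis at a deep stopping rectangle $R'$, which is stated in the $(\cV\ci{R'},\cU\ci{R'})$-frame, cannot be transported back to the fixed $(\cV\ci{R},\cU\ci{R})$-frame at bounded cost. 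Your own remark that the comparison works ``for $R'\subseteq R$ of comparable scale'' concedes exactly this point. The one-parameter version of (ii) already requires nontrivial work in Isralowitz--Pott--Treil; the paper's slicing strategy exists precisely to avoid redoing that in two parameters.
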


Using Theorem \ref{thm:equivalences little bmo} we deduce lower and upper bounds for commutators with Journ\'e operators.

\begin{thm}
  \label{thm:MatrixBMOLowerBound}
  Consider a function $B \in L^1\ti{loc}(\R^{n+m};M_{d}(\C)),$ let $1 < p < \infty$ and let $U$ and $V$ be biparameter $d \times d$-matrix valued $A_p$ weights. Let $R_j^{i},$ be the Riesz transform acting on the $i$-th variable and in the $j$-direction with respect to that variable. Then, there holds
  \begin{equation*}
  \Vert B\Vert\ci{\emph{bmo}(U,V,p)} \lesssim
  \max_{\substack{1\leq j\leq n\\ 1\leq k \leq m}} \Vert[R^1_jR^2_k,B]\Vert\ci{L^{p}(U)\rightarrow L^{p}(V)},
  \end{equation*}
  where the implied constant depends only on $n,m,d,p,[U]\ci{A_p(\R^n\times\R^m)}$ and $[V]\ci{A_p(\R^n\times\R^m)}$. 
\end{thm}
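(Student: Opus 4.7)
The plan is to combine a biparameter Uchiyama-type translate construction with the matrix linearization strategy standard in matrix-weighted commutator theory. By Theorem~\ref{thm:equivalences little bmo} it is enough to bound $\Vert B\Vert\ci{\wt{\text{bmo}}(U,V,p)}$. Fix a rectangle $R = R_1 \times R_2$ with $R_1 \subseteq \R^n$ a cube and $R_2 \subseteq \R^m$ a cube. Using $|A| = \sup_{|e|=|f|=1}|\La Ae, f\Ra|$ together with expansion in an orthonormal basis of $\C^d$ (losing only a factor of $d^2$), one reduces to estimating, for each pair of unit basis vectors $e, f \in \C^d$, the scalar integral
\[
\int_R \sigma(x)\, \La (B(x) - \La B\Ra\ci{R}) g,\, h \Ra\, \mathd x,
\]
where $g := \cU\ci{R}^{-1} e$ and $h := \cV\ci{R}^{\ast} f$ are fixed vectors in $\C^d$ and $\sigma$ is a measurable selector with $|\sigma| \leq 1$. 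Since $B - \La B\Ra\ci{R}$ has zero average on $R$, I may replace $\sigma$ by $\sigma_0 := (\sigma - \La \sigma \Ra\ci{R})\1\ci{R}$, which satisfies $|\sigma_0| \leq 2$ and $\int_R \sigma_0 = 0$ without changing the integral.

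Next, for each pair of directions $(j,k)$, choose a translate $\wt R_1$ of $R_1$ along the $j$-th coordinate axis in $\R^n$ and a translate $\wt R_2$ of $R_2$ along the $k$-th coordinate axis in $\R^m$, each shifted by a fixed multiple of its sidelength so that $\wt R := \wt R_1 \times \wt R_2$ is disjoint from $R$. For $x \in R$ and $y \in \wt R$ each one-parameter Riesz kernel has constant sign and magnitude comparable to $|R_1|^{-1}$, respectively $|R_2|^{-1}$, so for $\varphi := \1\ci{\wt R}\, g$
\[
R^1_j R^2_k \varphi(x) \approx c_0\, g,\qquad R^1_j R^2_k (B \varphi)(x) \approx c_0\, \La B\Ra\ci{\wt R}\, g \qquad (x \in R),
\]
for a nonzero constant $c_0 = c_0(j,k,n,m)$. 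Pairing $[R^1_j R^2_k, B]\varphi$ against $\psi := \sigma_0\, h$, the principal contribution collapses to $-c_0 \int_R \sigma_0(x)\, \La (B(x) - \La B\Ra\ci{R}) g,\, h\Ra\, \mathd x$, since the additive constant $\La B\Ra\ci{\wt R}$ is annihilated by $\int_R \sigma_0 = 0$; this cancellation is the whole point of the mean-zero modification of $\sigma$.

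Finally, H\"{o}lder's inequality and the commutator bound yield
\[
\biggl| \int \La [R^1_j R^2_k, B] \varphi,\, \psi \Ra\, \mathd x \biggr| \leq \Vert [R^1_j R^2_k, B]\Vert\ci{L^p(U) \to L^p(V)}\, \Vert \varphi\Vert\ci{L^p(U)}\, \Vert \psi\Vert\ci{L^{p'}(V')},
\]
and standard reducing-operator computations give $\Vert \varphi\Vert\ci{L^p(U)} \lesssim |R|^{1/p}$ (using comparability of $\cU\ci{\wt R}$ with $\cU\ci{R}$ on the nearby rectangle $\wt R$, a biparameter matrix $A_p$ property) and $\Vert \psi\Vert\ci{L^{p'}(V')} \lesssim |R|^{1/p'}$. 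Dividing by $c_0 |R|$, taking maxima over $(j,k)$ and basis pairs, and finally the supremum over $R$ closes the argument. The principal obstacle is to rigorously absorb the error terms from the kernel approximation on $R \times \wt R$ and the non-local tail contributions from $y \notin \wt R$; these are standard two-parameter Calder\'on--Zygmund tail estimates, but matching them against the matrix-weighted norm $\Vert \psi\Vert\ci{L^{p'}(V')}$ calls for careful use of the biparameter matrix $A_p$ hypothesis on $U$ and $V$.
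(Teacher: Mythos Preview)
Your approach is genuinely different from the paper's, and the sketch has a real gap that you underestimate. The paper does \emph{not} use a translated-rectangle test function at all. Instead it invokes the Isralowitz--Pott--Treil tensorization: from $U,V,B$ it builds a $(2d)\times(2d)$ weight $W$ via
\[
\Phi=\begin{pmatrix}V^{1/p}&V^{1/p}B\\0&U^{1/p}\end{pmatrix},\qquad W^{1/p}=(\Phi^{\ast}\Phi)^{1/2},
\]
so that $[W]_{A_p,R}^{1/p}\sim [U]_{A_p,R}^{1/p}+[V]_{A_p,R}^{1/p}+\Vert B\Vert_{\text{bmo}^1(U,V,p),R}$. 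The key lemma then bounds $|\cW_R'\cW_R|$ (equivalently, the $L^p(W)$ norm of the averaging operator $A_R$) by $\max_{j,k}\Vert \1_R R_j^1R_k^2\1_R\Vert_{L^p(W)\to L^p(W)}$. This is achieved by writing the averaging kernel $\1_{R\times R}/|R|$ exactly as a finite sum of Riesz kernels multiplied by rescaled Wiener-algebra functions, and using that multiplication of a bounded kernel by a Wiener-algebra modulation preserves $L^p(W)$-boundedness. A final rescaling $B\mapsto rB$, $r\to\infty$ isolates the commutator term. No approximation error ever appears.

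The gap in your sketch is precisely the ``kernel approximation on $R\times\wt R$'' that you flag but dismiss as standard. Writing $K(x,y)=\bar K+\delta K(x,y)$ on $R\times\wt R$, the error contribution is
\[
\int_R\int_{\wt R}\delta K(x,y)\,\overline{\sigma_0(x)}\,\langle(B(y)-B(x))g,h\rangle\,\mathd y\,\mathd x,
\]
and $|\delta K|$ is \emph{comparable} to $|\bar K|$ when $\wt R$ is adjacent to $R$. Expanding $B(y)-B(x)$ through $\langle B\rangle_{\wt R}$ and $\langle B\rangle_R$, the error contains terms of the form $|R|\int_{\wt R}|\cV_R(B(y)-\langle B\rangle_{\wt R})\cU_R^{-1}|\,\mathd y$, which are again little-bmo oscillations---the very quantity you are trying to bound---so the argument is circular. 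Pushing $\wt R$ to distance $N\ell$ makes $|\delta K|/|\bar K|\lesssim 1/N$, but the reducing-operator comparability $|\cU_{\wt R}\cU_R^{-1}|$ and the cross term $|\cV_R(\langle B\rangle_{\wt R}-\langle B\rangle_R)\cU_R^{-1}|$ then acquire factors polynomial in $N$ (through chains of length $\sim\log N$ in the $A_p$ condition), and these compete with the $1/N$ gain; one cannot choose $N$ to make the net error coefficient in front of $\Vert B\Vert_{\wt{\text{bmo}}}$ small. Also, your remark about ``tail contributions from $y\notin\wt R$'' is a red herring: $\varphi=\1_{\wt R}g$ is compactly supported on $\wt R$, so there are no such tails.
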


\begin{thm}
\label{thm:MatrixBMOUpperBound}
Let $T$ be any Journ\'e operator on $\R^n\times\R^m$. Let $1<p<\infty$, let $U,V$ be biparameter $d\times d$-matrix valued $A_p$ weights on $\R^n\times\R^m$, and let $B\in L^1\ti{loc}(\R^{n+m};M_d(\C))$. Then
\begin{equation*}
\Vert[T,B]\Vert\ci{L^{p}(U)\rightarrow L^{p}(V)}\lesssim\Vert B\Vert\ci{\emph{bmo}(U,V,p)},
\end{equation*}
where the implied constant depends only on $T,n,m,p,d,[U]\ci{A_p(\R^n\times\R^m)}$ and $[V]\ci{A_p(\R^n\times\R^m)}$.
\end{thm}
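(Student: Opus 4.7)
The plan is to apply a dyadic representation theorem for Journ\'e operators in the spirit of Martikainen, reducing matters to bounding commutators with model operators --- biparameter Haar shifts and partial paraproducts --- uniformly (with at most polynomial growth) in their complexities. For each model operator, one then decomposes $[S,B]$ into a finite list of biparameter paraproducts with symbol $B$ and closes the matrix-weighted estimate by inserting reducing operators at every Haar scale, absorbing them into Carleson-type bounds supplied by the little BMO norm.

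First, I would invoke the representation theorem to write $T$ as an average of Haar shifts $S^{i,j}_{\sh}$ and partial paraproducts over random pairs of dyadic grids, with coefficients exhibiting exponential decay in the complexities $(i,j)$. By linearity and the triangle inequality, this reduces the problem to proving, for each model operator $S$ of complexity $(i,j)$, a bound
\begin{equation*}
\Vert [S,B] \Vert\ci{L^p(U)\to L^p(V)} \lesssim (1+\max(i,j))^{C}\,\Vert B\Vert\ci{\text{bmo}(U,V,p)},
\end{equation*}
for some fixed $C$ depending only on $n,m,d,p$ and the matrix $A_p$ characteristics. Summing in $(i,j)$ then gives the desired commutator bound.

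Second, for a fixed model operator I would expand $B$ in the biparameter Haar basis and split $[S,B]f = S(Bf)-B(Sf)$ into model paraproducts following the scheme of Holmes--Petermichl--Wick in the biparameter scalar-weighted setting and of Isralowitz in the one-parameter two-matrix-weighted setting. The expansion yields finitely many paraproduct types (classical, mixed, and ``full''), each a bilinear form built from Haar coefficients $B\ci{R}^{\e}$ of the symbol paired with Haar-type projections of $f$. At this stage I would insert the reducing operators, rewriting the coefficients as $\cV\ci{R}\, B\ci{R}^{\e}\, \cU\ci{R}^{-1}$ times appropriately normalized Haar pieces, and then use Theorem~\ref{thm:equivalences little bmo} to replace the two-matrix weighted little BMO norm by the equivalent reducing-operator form $\Vert B\Vert\ci{\wt{\text{bmo}}(U,V,p)}$, which is tailor-made for such manipulations.

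Third, one controls each paraproduct by combining: a biparameter matrix-weighted Carleson embedding (encoding the hypothesis that $\cV\ci{R} B\ci{R}^{\e} \cU\ci{R}^{-1}$ behaves like the coefficients of a biparameter BMO function), together with matrix-weighted biparameter square function and maximal function bounds acting on $U^{1/p}f$. The main obstacle will be precisely this step: unlike in the scalar setting, one cannot freely commute the weights past the symbol, so reducing operators must be propagated through every Haar scale and matched across both parameters, and the biparameter matrix-weighted Carleson embedding with reducing operators --- which has no purely scalar or one-parameter analogue --- must be established. The equivalence in Theorem~\ref{thm:equivalences little bmo} is exactly what makes this propagation tractable, by allowing one to pivot between a John--Nirenberg style $L^p$-average formulation and a reducing-operator average formulation of the little BMO norm as needed at each step.
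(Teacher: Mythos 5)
Your approach is fundamentally different from the one in the paper, and as outlined it runs into a gap that the authors themselves flag as currently out of reach. The paper does not decompose $[T,B]$ at all: it proves Lemma~\ref{l: upper bounds}, the biparameter version of the Isralowitz--Pott--Treil tensorization trick, which packages the two $d\times d$ weights $U,V$ and the symbol $B$ into the single block upper-triangular $(2d)\times(2d)$ matrix $\Phi = \bigl(\begin{smallmatrix} V^{1/p} & V^{1/p}B \\ 0 & U^{1/p} \end{smallmatrix}\bigr)$ and the associated weight $W^{1/p}=(\Phi^{\ast}\Phi)^{1/2}$. One then shows $[W]\ci{A_p(\R^n\times\R^m)}\lesssim [U]\ci{A_p}+[V]\ci{A_p}+\Vert B\Vert\ci{\text{bmo}^1(U,V,p)}^{p}$, applies the known one-weight matrix-weighted $L^p(W)$ bound for (the canonical $(2d)$-dimensional vector-valued extension of) the Journ\'e operator $T$ itself from [matrix journe], reads off the $(1,2)$-block of $\Phi(T\otimes I_2)\Phi^{-1}$ to recover $[T,B]$, and finishes with Corollary~\ref{c: equivalences of little bmo} to replace $\text{bmo}^1$ by $\text{bmo}$. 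No dyadic representation theorem, no paraproduct decomposition of the commutator, and no Carleson embedding are needed at this stage; the entire difficulty is outsourced to the already-known weighted bound for $T$.

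The concrete gap in your route is the third step. When you expand $[S,B]$ for a biparameter shift or partial paraproduct $S$ using Haar coefficients of $B$ in the Holmes--Petermichl--Wick scheme, you get not only pure paraproducts (cancellative or noncancellative in \emph{both} parameters simultaneously) but also mixed ones (cancellative in one parameter, noncancellative in the other). For pure paraproducts the paper proves two-matrix weighted bounds (Proposition~\ref{prop:pureparaproducts}) via the $H^1$--BMO duality and the square function/maximal function bounds from [matrix journe]. For mixed paraproducts, however, the paper explicitly states that one needs a Fefferman--Stein vector-valued inequality for the matrix-weighted maximal operator, which they do not have and defer to a subsequent work. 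So the ``biparameter matrix-weighted Carleson embedding with reducing operators'' that you correctly identify as the crux is, in this matrix-weighted biparameter setting, not something you can simply ``establish'' with the tools available here --- it is the obstruction that prompted the authors to take the tensorization route instead. Unless you supply that missing ingredient, your argument does not close.
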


\subsection{Historical background}
Since we take various generalizations of the classical BMO space at once, we review the definitions and known related results for each of these separately.

\textbf{Weighted BMO spaces.} B.~Muckenhoupt and R.~L.~Wheeden \cite{muckenhoupt-wheeden} considered and studied the weighted BMO norm
\begin{equation}
\label{weighted BMO}
\Vert b\Vert\ci{\text{BMO}(\nu)}:=\sup_{Q}\frac{1}{\nu(Q)}\int_{Q}|b(x)-\La b\Ra\ci{Q}|\dd x,
\end{equation}
where the supremum is taken over all cubes $Q$ in $\R^n$ and $\nu$ is a weight. Such weighted BMO spaces characterize the boundedness of commutators over weighted Lebesgue spaces. Concretely, I.~Holmes, M.~Lacey and B.~D.~Wick \cite{holmes-lacey-wick} showed, extending earlier results~\cite{bloom},~\cite{coifman-rochberg-weiss} for the unweighted case, that if $1<p<\infty$, $\mu,\lambda$ are $A_p$ weights on $\R^n$ and $\nu=\mu^{1/p}\lambda^{-1/p}$, then for any \cz operator $T$ on $\R^n$ one has the inequality
\begin{equation*}
\Vert [T,b]\Vert\ci{L^{p}(\mu)\rightarrow L^{p}(\lambda)}\leq C\Vert b\Vert\ci{\text{BMO}(\nu)},
\end{equation*}
where the constant $0<C<\infty$ depends on $n,p,T$ and $[\mu]\ci{A_p(\R^n)},[\lambda]\ci{A_p(\R^n)}$, and if $R^{(1)}, \ldots, R^{(n)}$ are the Riesz transforms on $\R^n$, then
\begin{equation*}
\sum_{i=1}^{n}\Vert[R^{(i)},b]\Vert\ci{L^{p}(\mu)\rightarrow L^{p}(\lambda)}\geq c\Vert b\Vert\ci{\text{BMO}(\nu)},
\end{equation*}
where the constant $0<c<\infty$ depends only on $n,p$ and $[\mu]\ci{A_p(\R^n)},[\lambda]\ci{A_p(\R^n)}$. The commutator $[T,b]$ acts here by
\begin{equation*}
[T,b](f):=T(bf)-b T(f).
\end{equation*}
We refer to Subsection \ref{s:Ap} for precise definition of $A_p$ weights. One of the crucial results of \cite{holmes-lacey-wick} was the following weighted John--Nirenberg type inequality,
\begin{equation}
\label{holmes-lacey-wick John-Nirenberg}
c\Vert b\Vert\ci{\text{BMO}(\nu)}\leq\sup_{Q}\left(\frac{1}{\mu(Q)}\int_{Q}|b(x)-\La b\Ra\ci{Q}|^p\lambda (x)\dd x\right)^{1/p}\leq C\Vert b\Vert\ci{\text{BMO}(\nu)},
\end{equation}
where the supremum is taken over all cubes $Q\subseteq\R^n$, the constants $0<c,C<\infty$, depend on $n,p$ and $[\mu]\ci{A_p(\R^n)},[\lambda]\ci{A_p(\R^n)}$. In \cite{holmes-lacey-wick} it was also shown that $\text{BMO}(\nu)$ is the dual to a certain weighted Hardy space.

\textbf{Matrix-weighted BMO spaces.} Due to the lack of commutativity of matrices, one has to formulate all matrix-weighted BMO spaces directly in a two-weight fashion. Namely, let $1<p<\infty$ and consider two $d \times d$-matrix valued $A_p$ weights $U,V$ on $\R^n$. Let $B\in L^1\ti{loc}(\R^n;M_{d}(\C)),$ that is a locally integrable complex $d \times d$-matrix valued function on $\R^n$ in the sense that the matrix norm $|B(x)|$ is a locally integrable real valued function. We define the two matrix-weighted Bloom BMO norm $\Vert B\Vert\ci{\text{BMO}(U,V,p)}$ by
\begin{equation}
\label{matrix one-parameter two-weight BMO}
\Vert B\Vert\ci{\text{BMO}(U,V,p)}:=\sup_{Q}\left(\strokedint_{Q}|V(x)^{1/p}(B(x)-\La B\Ra\ci{Q})\cU^{-1}\ci{Q}|^{p}\mathd x\right)^{1/p},
\end{equation}
where the supremum is taken over all cubes $Q$ in $\R^n$ with sides parallel to the coordinate hyperplanes, and where $\cU_Q$ denotes the reducing operator of $U$ over the cube $Q$ taken with respect to exponent $p$ (see Section~\ref{subsec:ReducingOperators} for the corresponding definition and a brief exposition). This is the direct matrix-valued analog of the middle term of \eqref{holmes-lacey-wick John-Nirenberg}. As explained in \cite{isralowitz-pott-treil}, the direct matrix-valued analog of \eqref{weighted BMO} is
\begin{equation}
\label{matrix one-parameter ``one-weight'' BMO}
\Vert B\Vert\ci{\wt{\text{BMO}}(U,V,p)}:=\sup_{Q}\strokedint_{Q}|\cV\ci{Q}(B(x)-\La B\Ra\ci{Q})\cU^{-1}\ci{Q}|\mathd x,
\end{equation}
where the supremum is taken over all cubes $Q$ in $\R^n$ with sides parallel to the coordinate hyperplanes, and all reducing operators are taken with respect to exponent $p$. These matrix-weighted BMO spaces and variants of them were studied extensively in \cite{isralowitz duality}, \cite{isralowitz-kwon-pott}, culminating in \cite{isralowitz-pott-treil} which gave a complete theory of such spaces. In particular, \cite{isralowitz-pott-treil} established a matrix-valued analog of \eqref{holmes-lacey-wick John-Nirenberg},
\begin{equation}
\label{matrix valued BMO equivalence}
\Vert B\Vert\ci{\wt{\text{BMO}}(U,V,p)}\sim\Vert B\Vert\ci{\text{BMO}(U,V,p)}\sim\Vert B^{\ast}\Vert\ci{\text{BMO}(V',U',p')},
\end{equation}
where all implied constants depend only on $n,d,p,[U]\ci{A_p(\R^n)}$ and $[V]\ci{A_p(\R^n)}$. Moreover, work \cite{isralowitz duality} established in addition an $H^1$-BMO duality type result in the matrix-weighted setting for $p=2$.

Isralowitz--Pott--Treil \cite{isralowitz-pott-treil} showed that $\text{BMO}(U,V,p)$ is the correct space to characterize boundedness between matrix-weighted spaces of commutators of the form $[T,B]$, where $T$ is a \cz operator on $\R^n$ (see Subsection \ref{subsection: vector valued extensions} for the precise definition of $[T,B]$ in this context). They used a clever tensorization trick that allows one to reduce $d\times d$-matrix weighted estimates for the commutator $[T,B]$ to \emph{higher-dimensional} $(2d)\times(2d)$-matrix weighted estimates for the operator $T$. We refer to \cite{isralowitz-pott-treil} for details. It is worth noting that the results of \cite{isralowitz-pott-treil} were extended a little later by Cardenas--Isralowitz \cite{cardenas-isralowitz} to fractional matrix-weighted BMO spaces and commutators of fractional integral operators.

\textbf{Biparameter BMO spaces.} The study of \emph{biparameter} BMO spaces (i.~e.~invariant under rescaling each variable separately) was initiated by S.~Y.~A.~Chang \cite{chang} and R.~Fefferman \cite{fefferman}. These works investigated extensively the biparameter product BMO space $\text{BMO}(\R\times\R)$ on the product space $\R\times\R$, which consists of all locally integrable functions $b$ on $\R^2$ such that
\begin{equation*}
\Vert b\Vert\ci{\text{BMO}(\R\times\R)}:=\sup_{\Omega}\bigg(\frac{1}{|\Omega|}\sum_{\substack{R\in\bfD\\R\subseteq\Omega}}\left|(b,w\ci{R})\right|^2\bigg)^{1/2}<\infty,
\end{equation*}
where the supremum is taken over all (say) bounded Borel subsets $\Omega$ of $\R^2$ with nonzero measure,
where $\bfD$ stands for the family of all dyadic rectangles of $\R^2,$
and $(w\ci{R})\ci{R\in\bfD}$ is some (mildly regular) wavelet system adapted to $\bfD$. Here and below we denote
\begin{equation*}
(b,w\ci{R}):=\int_{\R^2}b(x)w\ci{R}(x)\dd x.
\end{equation*}
These works by Chang and Fefferman established in particular that $\text{BMO}(\R\times\R)$ is the dual to the biparameter Hardy space.

One can also define a different (nonequivalent) biparameter BMO space
by an oscillation expression analogous to that used in classical BMO.
To this end, let $b$ be a locally integrable function on $\R^{n}\times\R^{m}$. We say that $b$ belongs to the little BMO space $\mathrm{bmo}(\R^n\times\R^m)$ if
\begin{equation*}
\Vert b\Vert\ci{\text{bmo}(\R^{n}\times\R^{m})}:=\sup_{R}\frac{1}{|R|}\int_{R}|b(x)-\La b\Ra\ci{R}|\dd x<\infty,
\end{equation*}
where the supremum ranges over all rectangles $R$ in $\R^{n}\times\R^{m}$ with sides parallel to the coordinate planes but of arbitrary eccentricity, and where $\langle b \rangle_R$ denotes the average of $b$ over the rectangle $R$. Given now a weight $\nu$ on $\R^{n}\times\R^{m}$ and a locally integrable function $b$ on $\R^{n}\times\R^{m}$, I.~Holmes, S.~Petermichl and B.~D.~Wick \cite{holmes-petermichl-wick} introduced and studied the weighted little BMO space $\text{bmo}(\nu)$ which consists of all locally integrable functions $b$ on $\R^{n}\times\R^{m}$ such that
\begin{equation*}
\Vert b\Vert\ci{\text{bmo}(\nu)}:=\sup_{R}\frac{1}{\nu(R)}\int_{R}|b(x)-\La b\Ra\ci{R}|\dd x<\infty,
\end{equation*}
where the supremum ranges over all rectangles $R$ in $\R^{n}\times\R^{m}$ with sides parallel to the coordinate planes but of arbitrary eccentricity. In \cite{holmes-petermichl-wick} it is shown (extending an earlier result of S.~H.~Ferguson and C.~Sadosky \cite{ferguson-sadosky} for the tensor product of two Hilbert transforms in the unweighted case) that if $1<p<\infty$, $\mu,\lambda$ are biparameter $A_p$ weights on $\R^{n}\times\R^{m}$ and $\nu=\mu^{1/p}\lambda^{-1/p}$, then for any Journ\'e operator $T$ on $\R^{n}\times\R^{m}$ one has the inequality
\begin{equation}
\label{hpw-upper}
\Vert [T,b]\Vert\ci{L^{p}(\mu)\rightarrow L^{p}(\lambda)}\leq C\Vert b\Vert\ci{\text{bmo}(\nu)},
\end{equation}
where the constant $0<C<\infty$ depends on $n,m,p,T$ and $[\mu]\ci{A_p(\R^{n}\times\R^{m})},[\lambda]\ci{A_p(\R^{n}\times\R^{m})}$. Moreover, \cite{holmes-petermichl-wick} establishes that, with the same conditions on $p$, $\mu$ and $\lambda$ defined on $\R^{n}\times\R^{n}$ and if $R^{(1)}, \ldots, R^{(n)}$ are the Riesz transforms on $\R^n$, then
\begin{equation*}
\sum_{i,j=1}^{n}\Vert[R^{(i)}\otimes R^{(j)},b]\Vert\ci{L^{p}(\mu)\rightarrow L^{p}(\lambda)}\geq c\Vert b\Vert\ci{\text{bmo}(\nu)},
\end{equation*}
where the constant $0<c<\infty$ depends only on $n,p$ and $[\mu]\ci{A_p(\R^n\times\R^n)},[\lambda]\ci{A_p(\R^n\times\R^n)}$. Holmes--Petermichl--Wick \cite{holmes-petermichl-wick} provided through an application of the Lebesgue differentiation theorem a characterization of weighted little BMO in terms of usual (one-parameter) BMO, namely that
\begin{equation}
\label{holmes-petermichl-wick little BMO}
\Vert b\Vert\ci{\text{bmo}(\nu)} \sim \max\bigg(\esssup_{x_2\in\R^{m}}\Vert b(\fdot,x_2)\Vert\ci{\text{BMO}(\nu(\fdot,x_2))},~\esssup_{x_1\in\R^{n}}\Vert b(x_1,\fdot)\Vert\ci{\text{BMO}(\nu(x_1,\fdot))}\bigg)
\end{equation}
with the comparability constants depending only on $n,m$ and $[\nu]\ci{A_2(\R^{n}\times\R^{m})}$, allowing them to deduce weighted John--Nirenberg type inequalities for weighted little BMO analogous to estimate \eqref{holmes-lacey-wick John-Nirenberg}.

\subsection*{Structure of the article}
This article is structured in the following way.
In Section~\ref{s:background} we review the general background, definitions and properties of the objects that are studied here.
Section~\ref{s:MatrixWeightedLittleBMO} is devoted to the properties of the little BMO spaces and the proof of their equivalence
stated in Theorem~\ref{thm:equivalences little bmo}.
Finally, in Sections~\ref{s: lower matrix little BMO bounds} and~\ref{s: upper matrix little BMO bounds} we show, respectively,
Theorems~\ref{thm:MatrixBMOLowerBound} and~\ref{thm:MatrixBMOUpperBound}.

\subsection*{Acknowledgments}
The authors would like to thank Stefanie Petermichl for suggesting the problem treated here.

\section{Background}
\label{s:background}

In this section we fix some important definitions and notation, and we recall known facts about matrix-valued weights that we will need in the sequel.

\subsection{Tensor products of operators}

If $f$ is any function on a product space $\R^n\times\R^m$, then for all $x_1\in\R^n$, we denote by $f_{x_1}$ the function on $\R^m$ given by
\begin{equation*}
f_{x_1}(x_2):=f(x_1,x_2).
\end{equation*}
Moreover, for all $x_2\in\R^m$, we denote by $f_{x_2}$ the function on $\R^n$ given by
\begin{equation*}
f_{x_2}(x_1):=f(x_1,x_2).
\end{equation*}
We will be often denoting $f_{x_1},f_{x_2}$ by $f(x_1,\fdot),f(\fdot,x_2)$ respectively.

Let $T_1$ be an operator acting on (suitable) $\C^d$-valued functions $f$ on $\R^n$ such that $T_1f$ is again a $\C^d$-valued function on $\R^n$, and let $T_2$ be an operator acting on (suitable) $\C^d$-valued functions $f$ on $\R^m$ such that $T_2f$ is again a $\C^d$-valued function on $\R^m$. The tensor product $T_1\otimes T_2$ is defined as the operator acting on (suitable) $\C^d$-valued functions $f$ on the product space $\R^n\times\R^m$ by
\begin{equation*}
(T_1\otimes T_2)f(x_1,x_2):=T_2(g(x_1,\fdot))(x_2),\qquad g(x_1,y_2):=T_1(f(\fdot,y_2))(x_1),
\end{equation*}
for $x_1\in\R^n$ and $x_2,y_2\in\R^m$.

\subsection{Vector-valued extensions of linear operators}
\label{subsection: vector valued extensions}

Let $T$ be a linear operator acting on (suitable) scalar-valued (i.e.~$\C$-valued) functions $f$ on $\R^n$, in such a way that $Tf$ is also a scalar-valued function on $\R^n$. Then, we define the (canonical) $d$-vector valued extension of $T$ acting on (suitable) $\C^d$-valued functions on $\R^n$ as follows. Given any (suitable) $\C^d$-valued function $f$ on $\R^n$, we set
\begin{equation*}
Tf:=(Tf_1,\ldots,Tf_d),
\end{equation*}
where $f=(f_1,\ldots,f_d)$, i.e. $f_i$ is the scalar-valued function on $\R^n$ obtained as the $i$-th coordinate projection of $f$, for all $i=1,\ldots,d$; observe that $Tf$ is itself a 
$\C^d$-valued function on $\R^n$. Notice that abusing notation, we have denoted $T$ and its vector-valued extension by the same symbol; we will always be doing so in the sequel.

Given a linear operator $T$ acting on $\R^d$-valued functions on $\R^n$ and a $d\times d$-matrix valued function $B$ on $\R^n$, we can consider the commutator $[T,B]$ acting on $\R^d$-valued functions on $\R^n$ by
\begin{equation*}
[T,B](f):=T(Bf)-B T(f).
\end{equation*}
 
\subsection{Dyadic grids} \label{s:dyadicgrid} For definiteness, in what follows intervals in $\R$ will always be assumed to be left-closed, right-open and bounded. A cube in $\R^n$ will be a set of the form $Q=I_1\times\ldots\times I_n$, where $I_k,~k=1,\ldots,n$ are intervals in $\R$ of the same length; we denote by $\ell(Q):=|I_1|$ the sidelength of $Q$. A rectangle in $\R^n\times\R^m$ (with sides parallel to the coordinate axes) will be a set of the form $R=R_1\times R_2$, where $R_1$ is a cube in $\R^n$ and $R_2$ is a cube in $\R^m$.

A collection $\cD$ of intervals in $\R$ will be said to be a \emph{dyadic grid} in $\R$ if one can write $\cD=\bigcup_{k\in\Z}\cD_{k}$, such that the following hold:
\begin{itemize}
\item for all $k\in\Z$, $\cD_k$ forms a partition of $\R$, and all intervals in $\cD_k$ have the same length $2^{-k}$

\item for all $k\in\Z$, every $J\in\cD_{k}$ can be written as a union of exactly $2$ intervals in $\cD_{k+1}$.
\end{itemize}
We say that $\cD$ is the \emph{standard dyadic grid} in $\R$ if
\begin{equation*}
\cD:=\lbrace [m 2^{k},(m+1) 2^{k}):~k,m\in\Z\rbrace.
\end{equation*}

A collection $\cD$ of cubes in $\R^n$ will be said to be a \emph{dyadic grid} in $\R^n$ if for some dyadic grids $\cD^1,\ldots,\cD^n$ in $\R$ one can write $\cD=\bigcup_{k\in\Z}\cD_{k}$, where
\begin{equation*}
\cD_{k}=\lbrace I_1\times\ldots\times I_{n}:~ I_{i}\in\cD^i,~|I_{i}|=2^{-k},~i=1,\ldots,n\rbrace.
\end{equation*}
We say that $\cD$ is the \emph{standard dyadic grid} in $\R^n$ if
\begin{equation*}
\cD:=\lbrace [m_1 2^{k},(m_1+1) 2^{k}) \times \dots \times [m_n 2^{k},(m_n+1) 2^{k}):~k,m_1,\ldots,m_n\in\Z\rbrace.
\end{equation*}
If $\cD$ is a dyadic grid in $\R^n$, then we denote
\begin{equation*}
\ch_{i}(Q):=\lbrace K\in\cD:~K\subseteq Q,~|K|=2^{-in}|Q|\rbrace,\qquad Q\in\cD,~i=0,1,2,\ldots.
\end{equation*}

A collection $\bfD$ is said to be a \emph{product dyadic grid} in $\R^n\times\R^m$ if for a dyadic grid $\cD^1$ in $\R^n$ and a dyadic grid $\cD^2$ in $\R^m$ we have
\begin{equation*}
\bfD:=\lbrace R_1\times R_2:~R_i\in\cD_i,~i=1,2\rbrace,
\end{equation*}
and in this case we write (slightly abusing the notation) $\bfD=\cD^1\times\cD^2$.

If $\bfD$ is a product dyadic grid in $\R^n\times\R^m$, then we denote
\begin{equation*}
\ch_{i}(R):=\lbrace Q_1\times Q_2:~Q_i\in\ch_{i_{j}}(R_{j}),~j=1,2\rbrace,\qquad R\in\bfD,~i=(i_1,i_2),~i_1,i_2=0,1,2,\ldots.
\end{equation*}
We say that $R$ is the $i$-th ancestor of $P$ in $\bfD$ if $P\in\ch_{i}(R)$.

\subsection{Expressing matrix norms in terms of the norms of the columns}

In the sequel we denote by $(e_1,\ldots,e_d)$ the standard basis of $\C^d$. We will be often using the fact that
\begin{equation}
\label{equivalence_matrix_norm_columns}
|A|\sim_{d}\sum_{k=1}^{d}|Ae_k|,\quad\forall A\in M_{d}(\C),
\end{equation}
without explicitly mentioning it. Note also that for all $0<p<\infty$ and for nonnegative numbers $x_1,\ldots,x_d$ we have the estimate
\begin{equation*}
\min(1,d^{p-1})\sum_{i=1}^{d}x_i^{p}\leq\left(\sum_{i=1}^{d}x_i\right)^{p}\leq\max(1,d^{p-1})\sum_{i=1}^{d}x_i^{p}.
\end{equation*}
In particular, if $F_k:\R^n\rightarrow M_d(\C)$ is a sequence of Lebesgue-measurable functions and $0<p,q,r<\infty$, then we have
\begin{align}
\label{interchange Lebesgue norms and d sum}
&\left(\int_{\R^n}\left(\sum_{k=1}^{\infty}|F_k(x)|^r\right)^{p}\mathd x\right)^{q}
\sim_{d,p,q,r}\left(\int_{\R^n}\left(\sum_{k=1}^{\infty}\left(\sum_{i=1}^{d}|F_k(x)e_i|\right)^r\right)^{p}\mathd x\right)^{q}\\
\nonumber&\sim_{d,p,q,r}\left(\int_{\R^n}\left(\sum_{k=1}^{\infty}\sum_{i=1}^{d}|F_k(x)e_i|^r\right)^{p}\mathd x\right)^{q}
\sim_{d,p,q}\left(\int_{\R^n}\sum_{i=1}^{d}\left(\sum_{k=1}^{\infty}|F_k(x)e_i|^r\right)^{p}\mathd x\right)^{q}\\
\nonumber&\sim_{d,q}\sum_{i=1}^{d}\left(\int_{\R^n}\left(\sum_{k=1}^{\infty}|F_k(x)e_i|^r\right)^{p}\mathd x\right)^{q}.
\end{align}

\subsection{Matrix-valued weighted Lebesgue spaces}

A function $W$ on $\R^n$ is said to be a \emph{$d\times d$-matrix valued weight} if it is a locally integrable $M_{d}(\C)$-valued function such that $W(x)$ is a positive-definite matrix for a.e.~$x\in\R^n$. Given a $d\times d$-matrix valued weight $W$ on $\R^n$ and $1<p<\infty$, we define
\begin{equation*}
\Vert f\Vert\ci{L^{p}(W)}:=\left(\int_{\R^n}|W(x)^{1/p}f(x)|^{p}dx\right)^{1/p},
\end{equation*}
for all $\C^d$-valued measurable functions $f$ on $\R^n$.

\subsection{Reducing operators}
\label{subsec:ReducingOperators}
Let $1<p<\infty$. Let $E$ be a bounded measurable subset of $\R^n$ of nonzero measure. Let $W$ be a $M_{d}(\C)$-valued function on $E$ that is integrable over $E$ (meaning that $\int_{E}|W(x)|\mathd x<\infty$) and that takes a.e.~values in the set of positive-definite $d\times d$-matrices. It is proved in \cite[Proposition 1.2]{goldberg} (in the generality of abstract norms on $\C^d$) as an application of the standard properties of John ellipsoids that there exists a (not necessarily unique) positive-definite matrix $\cW\ci{E}\in M_{d}(\C)$, called \emph{reducing operator} of $W$ over $E$ with respect to the exponent $p$, such that
\begin{equation}
\label{reducing_matrices_definition}
\left(\strokedint_{E}|W(x)^{1/p}e|^{p}\mathd x\right)^{1/p}\leq |\cW\ci{E} e|\leq\sqrt{d}\left(\strokedint_{E}|W(x)^{1/p}e|^{p}\mathd x\right)^{1/p},\qquad\forall e\in\C^d.
\end{equation}
If $d=1$, i.e. $W$ is scalar-valued, then one can clearly take (and we will always be taking) $\cW\ci{E}:=(W)\ci{E}^{1/p}$. Moreover, if $p=2$, then
\begin{equation*}
\strokedint_{E}|W(x)^{1/2}e|^{2}\mathd x=\strokedint_{E}\La W(x)e,e\Ra \mathd x=\La (W)\ci{E}e,e\Ra=|( W)\ci{E}^{1/2}e|^2,\qquad\forall e\in\C^d,
\end{equation*}
and thus in this special case one can take (and we will always be taking) $\cW\ci{E}:=(W)\ci{E}^{1/2}$.

Assume in addition now that the function $W' \coloneq W^{-1/(p-1)}$ is also integrable over $E$. Then we let $\cW'\ci{E}$ be the reducing matrix of $W'$ over $E$ corresponding to the exponent $p':=p/(p-1)$, so that
\begin{equation*}
|\cW'\ci{E} e|\sim_{d}\left(\strokedint_{E}|W'(x)^{1/p'}e|^{p'}\mathd x\right)^{1/p'}=\left(\strokedint_{E}|W(x)^{-1/p}e|^{p'}\mathd x\right)^{1/p'},\qquad\forall e\in\C^d.
\end{equation*}
Note that one can take (and we will always be taking) $\cW''\ci{E}=\cW\ci{E}$. Observe that
\begin{align*}
|\cW\ci{E}\cW\ci{E}'|\sim_{p,d}\left(\strokedint_{E}\left(\strokedint_{E}|W(x)^{1/p}W(y)^{-1/p}|^{p'}\mathd y \right)^{p/p'}\mathd x\right)^{1/p}.
\end{align*}
For a detailed exposition of reducing operators we refer for example to \cite{matrix journe}. Here we note just the following estimates that we will need in the sequel. A proof of part (1) can be found, for example, in \cite{matrix journe}. A proof of part (2) can be found in \cite{isralowitz-kwon-pott}.

\begin{lm}
\label{l: replace inverse by prime}
Let $W$ be a $M_{d}(\C)$-valued function on $E$, such that $W$ and $W'=W^{-1/(p-1)}$ are integrable over $E$ for some $1<p<\infty$. Set
\begin{align*}
&C\ci{E}:=\strokedint_{E}\left(\strokedint_{E}|W(x)^{1/p}W(y)^{-1/p}|^{p'}\mathd y \right)^{p/p'}\mathd x.
\end{align*}
We consider reducing operators of $W$ with respect to exponent $p$ and of $W'$ with respect to exponent $p'$.

\begin{enumerate}
\item[(1)] There holds
\begin{equation*}
|\cW\ci{E}^{-1} e|\leq |\cW\ci{E}'e|\lesssim_{p,d}C\ci{E}^{1/p}|\cW\ci{E}^{-1}e|,\qquad\forall e\in\C^d.
\end{equation*}

\item[(2)] There holds
\begin{equation*}
|\La W^{1/p}\Ra\ci{E}e|\leq|\cW\ci{E}e|\lesssim_{p,d}C\ci{E}^{d/p}|\La W^{1/p}\Ra\ci{E}e|,\qquad\forall e\in\C^d.
\end{equation*}
\end{enumerate}
\end{lm}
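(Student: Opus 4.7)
My plan is to prove each of the four inequalities (two per part) separately. For the lower bound in part (1), I will exploit the $L^p$--$L^{p'}$ duality associated with reducing operators. Since $\cW\ci{E}$ is positive definite, one has $|\cW\ci{E}^{-1} e| = \sup_{f\neq 0} |\La e, f \Ra|/|\cW\ci{E} f|$. For any $f$, I will expand $\La e,f\Ra = \strokedint_E \La W(x)^{-1/p}e, W(x)^{1/p}f\Ra \, \mathd x$ using $W^{-1/p}W^{1/p}=I$, then apply pointwise Cauchy--Schwarz followed by Hölder in the integral to obtain
$|\La e, f\Ra| \leq \bigl(\strokedint_E |W(x)^{-1/p}e|^{p'} \mathd x\bigr)^{1/p'} \bigl(\strokedint_E |W(x)^{1/p}f|^p \mathd x\bigr)^{1/p}.$
By \eqref{reducing_matrices_definition}, the first factor is bounded by $|\cW'\ci{E}e|$ and the second by $|\cW\ci{E}f|$; dividing and taking the supremum yields $|\cW\ci{E}^{-1}e|\leq |\cW'\ci{E}e|$.

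For the upper bound in part (1), the key step will be the reduction
$|\cW'\ci{E}e| = |\cW'\ci{E}\cW\ci{E}\cdot\cW\ci{E}^{-1}e| \leq |\cW'\ci{E}\cW\ci{E}|\cdot|\cW\ci{E}^{-1}e|,$
so it suffices to show $|\cW'\ci{E}\cW\ci{E}|\lesssim_{p,d} C\ci{E}^{1/p}$. Using \eqref{equivalence_matrix_norm_columns} together with the defining property of $\cW'\ci{E}$ applied to each column of $\cW\ci{E}$, I will rewrite $|\cW'\ci{E}\cW\ci{E}|^{p'} \sim_{d,p} \strokedint_E |W(x)^{-1/p}\cW\ci{E}|^{p'}\, \mathd x$. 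Since $W(x)$ and $\cW\ci{E}$ are both Hermitian, the spectral norm is adjoint-invariant, giving $|W(x)^{-1/p}\cW\ci{E}|=|\cW\ci{E}W(x)^{-1/p}|$. Applying the defining property of $\cW\ci{E}$ on the columns of $W(x)^{-1/p}$ and then exchanging orders of integration via Fubini (coupled with Hölder in one variable), I will recover the double integral defining $C\ci{E}$.

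For part (2), the lower bound is immediate from Jensen's inequality for the matrix norm followed by the power-mean bound:
$|\La W^{1/p}\Ra\ci{E} e| \leq \strokedint_E |W(x)^{1/p}e|\,\mathd x \leq \bigl(\strokedint_E |W(x)^{1/p}e|^{p}\,\mathd x\bigr)^{1/p} \leq |\cW\ci{E}e|.$
The upper bound is the most delicate of the four; the factor $C\ci{E}^{d/p}$ with $d$ in the exponent strongly suggests a $d$-step iteration over directions in $\C^d$. My plan is: at each step, isolate a unit vector $v$ maximizing the ratio $|\cW\ci{E}v|/|\La W^{1/p}\Ra\ci{E}v|$, apply a one-step John-ellipsoid-type comparison of cost $C\ci{E}^{1/p}$, and recurse on the orthogonal hyperplane. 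The hard part will be organizing this iteration so that the bound holds for every fixed $e$ (rather than only in operator norm), with the constants compounding into the $C\ci{E}^{d/p}$ factor; this is why part (2) is technical and is cited from \cite{isralowitz-kwon-pott}.
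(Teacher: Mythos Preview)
The paper does not give its own proof of this lemma; it simply cites \cite{matrix journe} for part (1) and \cite{isralowitz-kwon-pott} for part (2). Your arguments for all of part (1) and for the lower bound in part (2) are correct and are essentially the standard ones in those references: the duality/H\"older trick for $|\cW\ci{E}^{-1}e|\le|\cW'\ci{E}e|$, the reduction of the upper bound in (1) to $|\cW'\ci{E}\cW\ci{E}|\sim_{p,d}C\ci{E}^{1/p}$ (a fact the paper records just above the lemma), and Jensen for $|\La W^{1/p}\Ra\ci{E}e|\le|\cW\ci{E}e|$.

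The gap is in your plan for the upper bound in part (2). The sketch ``isolate a maximizing direction, apply a one-step comparison of cost $C\ci{E}^{1/p}$, recurse on the orthogonal hyperplane'' is not a workable outline: you have not said what the one-step comparison is, nor why passing to the orthogonal complement of the previous maximizer is compatible with either of the two operators $\cW\ci{E}$ and $\La W^{1/p}\Ra\ci{E}$, which have no reason to share eigenspaces. Worse, the scheme is internally inconsistent as stated: if you could show $|\cW\ci{E}v|\le C\ci{E}^{1/p}|\La W^{1/p}\Ra\ci{E}v|$ for the direction $v$ that \emph{maximizes} the ratio, that would already be the full operator-norm bound $|\cW\ci{E}\La W^{1/p}\Ra\ci{E}^{-1}|\le C\ci{E}^{1/p}$, and no iteration (hence no exponent $d$) would be needed. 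The actual argument in \cite{isralowitz-kwon-pott} does iterate $d$ times, but the induction is over eigenvalues rather than over maximizing directions, and each step uses the already-established lower bound together with the $A_p$-type quantity $C\ci{E}$ to compare consecutive eigenvalues; the factor $C\ci{E}^{1/p}$ accumulates through that eigenvalue comparison, not through a directional one. You should either reproduce that argument in detail or, as the paper does, cite it.
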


\subsubsection{Iterating reducing operators}

Let $E,F$ be measurable subsets of $\R^n,\R^m$ respectively with $0<|E|,|F|<\infty$. Let $1<p<\infty$. Let $W$ be a $M_{d}(\C)$-valued integrable function on $E\times F$ taking a.e.~values in the set of positive-definite $d\times d$-matrices, such that $W'=W^{-1/(p-1)}$ is also integrable over $E\times F$. For all $x_1\in E$, set $W_{x_1}(x_2):=W(x_1,x_2)$, $x_2\in F$. For a.e.~$x_1\in E$, denote by $\cW\ci{x_1,F}$ the reducing operator of $W_{x_1}$ over $F$ with respect to the exponent $p$. If $p=2$, then
\begin{equation*}
\cW\ci{x_1,F}:=(W(x_1,\fdot))\ci{F}^{1/2}.
\end{equation*}
and therefore the function $\cW\ci{x_1,F}$ is clearly measurable in $x_1$. In fact, it is not hard to see that for any $1<p<\infty$, one can choose the reducing operator $\cW\ci{x_1,F}$ in a way that is measurable in $x_1$. A proof of that can be found in the appendix of \cite{matrix journe}.

Set $W\ci{F}(x_1):=(\cW\ci{x_1,F})^{p}$, for a.e.~$x_1\in E$. Then, it is proved in \cite{matrix journe} that $W\ci{F}\in L^1(E;M_{d}(\C))$, and that
\begin{equation}
\label{iterate reducing operators}
|\cW\ci{F,E}e|\sim_{p,d}|\cW\ci{E\times F}e|,\qquad\forall e\in\C^d,
\end{equation}
where $\cW\ci{F,E}$ is the reducing operator of $W\ci{F}$ over $E$ with respect to the exponent $p$. Moreover, set $W'\ci{F}(x_1):=(W\ci{F}(x_1))^{-1/(p-1)}=\cW\ci{x_1,F}^{-p'}$, for a.e.~$x_1\in E$. It is proved in \cite{matrix journe} that $W\ci{F}'\in L^1\ti{loc}(E;M_{d}(\C))$. We denote by $\cW\ci{F,E}'$ the reducing operator of $W\ci{F}'$ over $E$ with respect to the exponent $p'$.

Of course, analogous facts hold if one ``iterates" the operation of taking reducing operators in the reverse order.

\subsection{Matrix \texorpdfstring{$A_p$}{Ap} weights}
\label{s:Ap}

\subsubsection{One-parameter matrix \texorpdfstring{$A_p$}{Ap} weights}

Let $W$ be a $d\times d$-matrix valued weight on $\R^n$, i.e. $W$ is a locally integrable $M_{d}(\C)$-valued function on $\R^n$ that takes a.e.~values in the set of positive-definite $d\times d$-matrices. We say that $W$ is a (one-parameter) $d\times d$-matrix valued $A_p$ weight if
\begin{equation*}
[W]\ci{A_p(\R^n)}:=\sup_{Q}\strokedint_{Q}\left(\strokedint_{Q}|W(x)^{1/p}W(y)^{-1/p}|^{p'}\mathd y \right)^{p/p'}\mathd x<\infty,
\end{equation*}
where the supremum is taken over all cubes $Q$ in $\R^{n}$. Note that if $W$ is a $d\times d$-matrix valued $A_p$ weight on $\R^n$, then $W':=W^{-1/(p-1)}$ is a $d\times d$-matrix valued $A_{p'}$ weight on $\R^n$ with $[W']\ci{A_{p'}(\R^n)}^{1/p'}\sim_{p,d}[W]\ci{A_p(\R^n)}^{1/p}$, and
\begin{equation*}
[W]\ci{A_p(\R^n)}\sim_{p,d}\sup_{Q}|\cW'\ci{Q}\cW\ci{Q}|^{p},
\end{equation*}
where the reducing matrices for $W$ correspond to exponent $p$, and those for $W'$ correspond to exponent $p'$.

If $\cD$ is any dyadic grid in $\R^n$, we define
\begin{equation*}
[W]\ci{A_p,\cD}:=\sup_{Q\in\cD}\strokedint_{Q}\left(\strokedint_{Q}|W(x)^{1/p}W(y)^{-1/p}|^{p'}\mathd y \right)^{p/p'}\mathd x,
\end{equation*}
and we say that $W$ is a (one-parameter) $d\times d$-matrix valued $\cD$-dyadic $A_p$ weight if $[W]\ci{A_p,\cD}<\infty$.

\subsubsection{Biparameter matrix \texorpdfstring{$A_p$}{Ap} weights}

Let $W$ be a $d\times d$-matrix valued weight on $\R^n\times\R^m$. We say that $W$ is a biparameter $d\times d$-matrix valued $A_p$ weight if
\begin{equation*}
[W]\ci{A_p(\R^n\times\R^m)}:=\sup_{R}\strokedint_{R}\left(\strokedint_{R}|W(x)^{1/p}W(y)^{-1/p}|^{p'}\mathd y \right)^{p/p'}\mathd x<\infty,
\end{equation*}
where the supremum is taken over all rectangles $R$ in $\R^{n}$ (with sides parallel to the coordinate axes). Note that if $W$ is a $d\times d$-matrix valued biparameter $A_p$ weight on $\R^n\times\R^m$, then $W':=W^{-1/(p-1)}$ is a $d\times d$-matrix valued biparameter $A_{p'}$ weight on $\R^n\times\R^m$ with $[W']\ci{A_{p'}(\R^n\times\R^m)}^{1/p'}\sim_{d,p}[W]\ci{A_p(\R^n\times\R^m)}^{1/p}$, and
\begin{equation*}
[W]\ci{A_p(\R^n\times\R^m)}\sim_{p,d}\sup_{R}|\cW'\ci{R}\cW\ci{R}|^{p},
\end{equation*}
where the reducing matrices for $W$ correspond to exponent $p$, and those for $W'$ correspond to exponent $p'$.

If $\bfD$ is any product dyadic grid in $\R^n\times\R^m$, we define
\begin{equation*}
[W]\ci{A_p,\bfD}:=\sup_{R\in\bfD}\strokedint_{R}\left(\strokedint_{R}|W(x)^{1/p}W(y)^{-1/p}|^{p'}\mathd y \right)^{p/p'}\mathd x;
\end{equation*}
we say that $W$ is a biparameter $d\times d$-matrix valued $\bfD$-dyadic $A_p$ weight if $[W]\ci{A_p,\bfD}<\infty$.

The following result, proved in \cite{matrix journe}, will play an important role below.

\begin{lm}
\label{l: two-weight-biparameter A_p implies uniform A_p in each coordinate}
Let $W$ be a $d\times d$-matrix valued weight on $\R^{n+m}$.
\begin{enumerate}

\item[(1)] Let $\bfD=\cD^1\times\cD^2$ be any product dyadic grid in $\R^n\times\R^m$. Then, there holds
\begin{equation*}
[W(\fdot,x_2)]\ci{A_p,\cD^1}\lesssim_{d,p}[W]\ci{A_p,\bfD},\qquad\text{for a.e.~} x_2\in \R^m,
\end{equation*}
\begin{equation*}
[W(x_1,\fdot)]\ci{A_p,\cD^2}\lesssim_{d,p}[W]\ci{A_p,\bfD},\qquad\text{for a.e.~} x_1\in \R^n.
\end{equation*}

\item[(2)] There holds
\begin{equation*}
[W(\fdot,x_2)]\ci{A_p(\R^n)}\lesssim_{d,p,n}[W]\ci{A_p(\R^n\times\R^m)},\qquad\text{for a.e.~} x_2\in \R^m,
\end{equation*}
\begin{equation*}
[W(x_1,\fdot)]\ci{A_p(\R^m)}\lesssim_{d,p,m}[W]\ci{A_p(\R^n\times\R^m)},\qquad\text{for a.e.~} x_1\in \R^n.
\end{equation*}

\end{enumerate}
\end{lm}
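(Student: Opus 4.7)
The proof combines two tools: the iteration identity for reducing operators, equation~\eqref{iterate reducing operators}, together with the characterization $[W]\ci{A_p,\bfD}\sim_{p,d}\sup_{R\in\bfD}|\cW'\ci{R}\cW\ci{R}|^p$; and Lebesgue differentiation applied fiber by fiber. I treat part~(1); part~(2) follows by the same scheme but with non-dyadic shrinking cubes, the outer supremum being reduced to a countable dense subfamily (e.g.\ cubes with rational parameters), which is where the additional dimensional dependence on $n$ enters.

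Fix $R_1\in\cD^1$ and $x_2\in\R^m$, and let $(R_2^{(k)})_k\subset\cD^2$ be the chain of dyadic cubes of sidelength $2^{-k}$ containing $x_2$. Abbreviate $\cA_k:=\cW\ci{R_1\times R_2^{(k)}}$ and $\cA_k':=\cW'\ci{R_1\times R_2^{(k)}}$. The biparameter hypothesis yields $|\cA_k'\cA_k|^p\lesssim_{p,d}[W]\ci{A_p,\bfD}$ uniformly in $k$. For each vector $e\in\C^d$, the function $\phi_e(x_2):=\strokedint_{R_1}|W(x_1,x_2)^{1/p}e|^p\mathd x_1$ is locally integrable on $\R^m$, so Lebesgue differentiation along $(R_2^{(k)})_k$ gives, for a.e.\ $x_2$,
\begin{equation*}
|\cA_k e|^p\sim_{p,d}\strokedint_{R_1\times R_2^{(k)}}|W(x)^{1/p}e|^p\mathd x\;\longrightarrow\;\phi_e(x_2)\sim_{p,d}|\cM e|^p,
\end{equation*}
where $\cM$ denotes any reducing operator of $W(\fdot,x_2)$ over $R_1$ with respect to exponent $p$. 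The analogous limit holds for $\cA_k'$ and a reducing operator $\cM'$ of $W(\fdot,x_2)^{-1/(p-1)}$ over $R_1$ with respect to $p'$.

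Intersecting the null sets over the countable collection $\cD^1\times(D\cup\{e_1,\ldots,e_d\})$, where $D\subset\C^d$ is a fixed countable dense subset and $(e_l)$ is the standard basis, one obtains a full-measure set of $x_2$ on which all the above convergences hold simultaneously. The identities $|\cA_k|^p\sim_d\sum_l|\cA_k e_l|^p$ and $|\cA_k'|^{p'}\sim_d\sum_l|\cA_k' e_l|^{p'}$ show that $(\cA_k)$ and $(\cA_k')$ remain bounded in $M_d(\C)$; by Bolzano--Weierstrass, extract a common subsequence along which $\cA_{k_j}\to M$ and $\cA_{k_j}'\to M'$ in matrix norm. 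Then $|Me|^p\sim_{p,d}|\cM e|^p$ for all $e\in D$, hence by continuity for all $e\in\C^d$, so $M$ is itself a valid reducing operator for $W(\fdot,x_2)$ over $R_1$; likewise for $M'$. Passing the product-norm bound to the limit,
\begin{equation*}
|\cM'\cM|^p\sim_{p,d}|M'M|^p=\lim_j|\cA_{k_j}'\cA_{k_j}|^p\lesssim_{p,d}[W]\ci{A_p,\bfD}.
\end{equation*}
Taking the supremum over $R_1\in\cD^1$ then yields $[W(\fdot,x_2)]\ci{A_p,\cD^1}\lesssim_{p,d}[W]\ci{A_p,\bfD}$; the symmetric statement for $W(x_1,\fdot)$ is identical.

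The main technical obstacle is the matrix-limit step: Lebesgue differentiation only provides pointwise convergence of the lengths $|\cA_k e|$, and this does not a priori control the operator norm of the product $\cA_k'\cA_k$. The remedy is uniform boundedness of $(\cA_k),(\cA_k')$ together with the fact that a positive-definite matrix is determined up to equivalence by its induced length function, ensuring that every subsequential matrix limit is itself a legitimate reducing operator and hence inherits the biparameter product-norm bound.
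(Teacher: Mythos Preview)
The paper does not give its own proof of this lemma; it is quoted from \cite{matrix journe}. Your argument is correct and is essentially the one used there: the same subsequential-limit-of-reducing-operators mechanism appears verbatim later in the present paper (proof of Lemma~\ref{l: step3}), where for a.e.\ $x_2$ and each $P\in\cD^1$ one extracts a sequence $Q_k\in\cD^2$ shrinking to $x_2$ along which $\cW\ci{P\times Q_k}\to\cW\ci{P,2}$ with $|\cW\ci{P,2}e|\sim_{p,d}|\cW\ci{x_2,P}e|$.

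Two remarks on presentation. First, the step $|\cM'\cM|\sim_{p,d}|M'M|$ deserves one line of justification: from $|Me|\sim|\cM e|$ one gets $|M\cM^{-1}|,|\cM M^{-1}|\lesssim 1$, and self-adjointness gives $|\cM^{-1}M|=|M\cM^{-1}|$, whence $|M'M|\le|M'\cM'^{-1}|\,|\cM'\cM|\,|\cM^{-1}M|\lesssim|\cM'\cM|$ and conversely. Second, your explanation of the dimensional constant in part~(2) is slightly off: the extra dependence does not arise from a countable reduction of the supremum (the $A_p$ expression is already a supremum of continuous-in-$Q$ quantities), but rather from the standard device of covering all cubes by finitely many shifted dyadic grids (the $1/3$ trick uses $3^n$ grids in $\R^n$), after which part~(1) applies to each grid.
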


\section{Matrix-weighted little BMO}
\label{s:MatrixWeightedLittleBMO}

In this section we define and study the matrix-weighted little BMO space in two parameters. Emphasis is led on proving that several possible reasonable norms are actually equivalent. The development parallels that of \cite{holmes-petermichl-wick} and \cite{isralowitz-pott-treil}.

\subsection{Dyadic versions} Let $1<p<\infty$. Let $\bfD=\cD^1\times\cD^2$ be any product dyadic grid in $\R^n\times\R^m$. Let $U,V$ be biparameter $d\times d$-matrix valued $\bfD$-dyadic $A_p$ weights on $\R^n\times\R^m$. Let $B\in L^1\ti{loc}(\R^{n+m};M_{d}(\C))$. We define the two-matrix weighted $\bfD$-dyadic little BMO norm $\Vert B\Vert\ci{\text{bmo}\ci{\bfD}(U,V,p)}$ by
\begin{equation}
\label{matrix two-weight little BMO dyadic}
\Vert B\Vert\ci{\text{bmo}\ci{\bfD}(U,V,p)}:=\sup_{R\in\bfD}\left(\strokedint_{R}|V(x)^{1/p}(B(x)-\La B\Ra\ci{R})\cU\ci{R}^{-1}|^{p}\mathd x\right)^{1/p},
\end{equation}
where all reducing operators are taken with respect to exponent $p$. We also define
\begin{equation}
\label{matrix ``one-weight'' little BMO dyadic}
\Vert B\Vert\ci{\wt{\text{bmo}}\ci{\bfD}(U,V,p)}:=\sup_{R\in\bfD}\strokedint_{R}|\cV\ci{R}(B(x)-\La B\Ra\ci{R})\cU\ci{R}^{-1}|\mathd x,
\end{equation}
where all reducing operators are again taken with respect to exponent $p$. As in the one-parameter case \cite{isralowitz-pott-treil}, we will also need the variants
\begin{equation}
\label{matrix two-weight little BMO first pointwise dyadic}
\Vert B\Vert\ci{\text{bmo}^1\ci{\bfD}(U,V,p)}:=\sup_{R\in\bfD}\left(\strokedint_{R}\left(\strokedint_{R}|V(x)^{1/p}(B(x)-B(y))U(y)^{-1/p}|^{p'}\mathd y\right)^{p/p'}\mathd x\right)^{1/p}
\end{equation}
and
\begin{equation}
\label{matrix two-weight little BMO second pointwise dyadic}
\Vert B\Vert\ci{\text{bmo}^2\ci{\bfD}(U,V,p)}:=\sup_{R\in\bfD}\left(\strokedint_{R}\left(\strokedint_{R}|V(x)^{1/p}(B(x)-B(y))U(y)^{-1/p}|^{p}\mathd x\right)^{p'/p}\mathd y\right)^{1/p'}.
\end{equation}
Observe that $\Vert B\Vert\ci{\text{bmo}^2\ci{\bfD}(U,V,p)}=\Vert B^{\ast}\Vert\ci{\text{bmo}^1\ci{\bfD}(V',U',p')}$.

Our main goal is to prove the following

\begin{prop}
\label{prop:equivalencesdyadiclittleBMO}
Let $d,p,\bfD,U,V,B$ as above. Then, there holds
\begin{equation}
\label{mainBMOdyadicequivalence}
\Vert B\Vert\ci{\emph{bmo}\ci{\bfD}(U,V,p)}\sim\Vert B^{\ast}\Vert\ci{\emph{bmo}\ci{\bfD}(V',U',p')}\sim\Vert B\Vert\ci{\wt{\emph{bmo}}\ci{\bfD}(U,V,p)}\sim\Vert B^{\ast}\Vert\ci{\wt{\emph{bmo}}\ci{\bfD}(V',U',p')},
\end{equation}
and
\begin{equation}
\label{secondaryBMOdyadicequivalence}
\Vert B\Vert\ci{\emph{bmo}\ci{\bfD}(U,V,p)}\sim\Vert B\Vert\ci{\emph{bmo}^1\ci{\bfD}(U,V,p)}\sim\Vert B\Vert\ci{\emph{bmo}^2\ci{\bfD}(U,V,p)},
\end{equation}
where all implied constants depend only on $n,m,p,d, [U]\ci{A_p,\bfD}$ and $[V]\ci{A_p,\bfD}$
\end{prop}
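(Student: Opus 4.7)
The proof follows the one-parameter strategy of Isralowitz--Pott--Treil \cite{isralowitz-pott-treil}, adapted to our biparameter dyadic setting. A key structural observation is that every norm appearing in (\ref{mainBMOdyadicequivalence})--(\ref{secondaryBMOdyadicequivalence}) is a supremum over $R\in\bfD$ of an integral expression depending only on $B, U, V$ restricted to $R$ together with the associated reducing operators $\mathcal{U}_R, \mathcal{V}_R, \mathcal{U}'_R, \mathcal{V}'_R$; consequently the biparameter character enters only through the replacement of cubes by rectangles. The biparameter $A_p$-characteristic $[\cdot]\ci{A_p,\bfD}$ controls the single-rectangle quantity $|\mathcal{U}'_R\mathcal{U}_R|^p$ uniformly in $R\in\bfD$ exactly as in the one-parameter setting, so all reducing-operator manipulations transfer essentially verbatim.

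I will first prove chain (\ref{secondaryBMOdyadicequivalence}). The bounds $\Vert B\Vert\ci{\text{bmo}\ci{\bfD}}\lesssim\Vert B\Vert\ci{\text{bmo}^j\ci{\bfD}}$ ($j=1,2$) come from writing $B(x)-\langle B\rangle\ci{R}=\strokedint_R(B(x)-B(y))\,\mathrm{d}y$, inserting $U(y)^{-1/p}U(y)^{1/p}$, applying H\"older's inequality in $y$, and using the columnwise reducing-operator estimate $\strokedint_R|U(y)^{1/p}\mathcal{U}_R^{-1}|^p\,\mathrm{d}y\lesssim_d 1$ from (\ref{reducing_matrices_definition}). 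The reverse inequalities $\Vert B\Vert\ci{\text{bmo}^j\ci{\bfD}}\lesssim\Vert B\Vert\ci{\text{bmo}\ci{\bfD}}$ are based on the decomposition $B(x)-B(y)=(B(x)-\langle B\rangle\ci R)-(B(y)-\langle B\rangle\ci R)$: each piece is factored through $\mathcal{U}_R^{-1}\mathcal{U}_R$, and for the $B(y)$-piece the outer weight is additionally transferred via $V(x)^{1/p}V(y)^{-1/p}\cdot V(y)^{1/p}$; the resulting remainder factors of the form $\strokedint_R|\mathcal{U}_R U(y)^{-1/p}|^{p'}\,\mathrm{d}y$ are absorbed into $|\mathcal{U}'_R\mathcal{U}_R|^{p'}\lesssim[U]\ci{A_p,\bfD}^{p'/p}$ via Lemma \ref{l: replace inverse by prime}, while averages of $|V(x)^{1/p}V(y)^{-1/p}|$ are controlled by the $A_p$-condition on $V$.

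For chain (\ref{mainBMOdyadicequivalence}), I observe the algebraic identity
\[
\Vert B\Vert\ci{\text{bmo}^2\ci{\bfD}(U,V,p)} = \Vert B^*\Vert\ci{\text{bmo}^1\ci{\bfD}(V',U',p')},
\]
which follows at once from $|A|=|A^*|$ together with the identities $W'^{1/p'}=W^{-1/p}$ and $W'^{-1/p'}=W^{1/p}$ (after a swap of dummy variables). Combined with chain (\ref{secondaryBMOdyadicequivalence}) this yields $\Vert B\Vert\ci{\text{bmo}\ci{\bfD}(U,V,p)}\sim\Vert B^*\Vert\ci{\text{bmo}\ci{\bfD}(V',U',p')}$. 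An analogous adjoint manipulation, together with the relations $|\mathcal{U}'_R e|\sim|\mathcal{U}_R^{-1}e|$ and $|\mathcal{V}'_R e|\sim|\mathcal{V}_R^{-1}e|$ (up to $A_p$-dependent constants, from Lemma \ref{l: replace inverse by prime}), gives $\Vert B\Vert\ci{\wt{\text{bmo}}\ci{\bfD}(U,V,p)}\sim\Vert B^*\Vert\ci{\wt{\text{bmo}}\ci{\bfD}(V',U',p')}$.

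It remains to establish $\Vert B\Vert\ci{\text{bmo}\ci{\bfD}}\sim\Vert B\Vert\ci{\wt{\text{bmo}}\ci{\bfD}}$. The direction $\wt{\text{bmo}}\ci{\bfD}\lesssim\text{bmo}\ci{\bfD}$ is direct: by (\ref{reducing_matrices_definition}) applied columnwise,
\[
|\mathcal{V}_R(B(x)-\langle B\rangle\ci R)\mathcal{U}_R^{-1}e_i|\lesssim_d\bigl(\strokedint_R|V(y)^{1/p}(B(x)-\langle B\rangle\ci R)\mathcal{U}_R^{-1}e_i|^p\,\mathrm{d}y\bigr)^{1/p};
\]
inserting $V(y)^{1/p}V(x)^{-1/p}\cdot V(x)^{1/p}$ and applying H\"older in $x$ with exponents $(p,p')$ separates out the $\text{bmo}$-norm from an average of $|V(y)^{1/p}V(x)^{-1/p}|^p$, which coincides with the $A_{p'}$-characteristic of $V'$ and is thus comparable to $[V]\ci{A_p,\bfD}^{p'/p}$. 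The reverse $\text{bmo}\ci{\bfD}\lesssim\wt{\text{bmo}}\ci{\bfD}$ is the principal obstacle: following the matrix-valued John--Nirenberg-type argument of \cite{isralowitz-pott-treil}, I will perform a stopping-time decomposition inside a fixed rectangle $R\in\bfD$ (relative to its bi-dyadic structure) in order to promote the $L^1$-control afforded by $\wt{\text{bmo}}$ to the $L^p$-control required by $\text{bmo}$. Carrying out this matrix John--Nirenberg argument within the bi-dyadic structure inside $R$, rather than in a single dyadic grid, is the main technical task; the rest is routine bookkeeping with reducing operators and the $A_p$ condition.
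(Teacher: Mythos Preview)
Your route diverges from the paper's in the one genuinely hard step, namely $\Vert B\Vert\ci{\text{bmo}\ci{\bfD}(U,V,p)}\lesssim\Vert B\Vert\ci{\wt{\text{bmo}}\ci{\bfD}(U,V,p)}$. The paper does \emph{not} run a biparameter John--Nirenberg argument. Instead it reduces to the one--parameter result of \cite[Lemma~4.7]{isralowitz-pott-treil} by a slicing/Lebesgue--differentiation scheme: Lemma~\ref{l: step1} bounds $\text{bmo}\ci{\bfD}$ by the uniform one--parameter BMO norms of the slices $B_{x_1},B_{x_2}$; Lemma~\ref{l: step3} (the technical core) bounds those slice norms back by $\wt{\text{bmo}}\ci{\bfD}$ via a careful Lebesgue--point argument for reducing operators; Lemma~\ref{l: step2} is the easy $\wt{\text{bmo}}\ci{\bfD}\lesssim\text{bmo}\ci{\bfD}$. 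Only once \eqref{mainBMOdyadicequivalence} is in hand does the paper deduce \eqref{secondaryBMOdyadicequivalence} (so the logical order is opposite to yours).

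Your proposed direct John--Nirenberg ``within the bi-dyadic structure inside $R$'' is risky as stated. A stopping-time decomposition over \emph{all} dyadic subrectangles of $R$ (arbitrary eccentricity) does not yield a disjoint Calder\'on--Zygmund family, since maximal subrectangles need not be pairwise disjoint; this is the usual obstruction to naive biparameter CZ arguments. What \emph{does} work is to observe that for a fixed $R=R_1\times R_2$ the subcollection $\{S_1\times S_2: S_i\in\ch_k(R_i),\ k\ge0\}$ is a \emph{one}-parameter dyadic lattice (each element has $2^{n+m}$ children), the $\wt{\text{bmo}}\ci{\bfD}$ hypothesis and the $A_p$ characteristics control all such subrectangles, and hence the matrix John--Nirenberg of \cite{isralowitz-pott-treil} applies verbatim to this lattice. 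If that is what you intend, say so explicitly; then your argument is actually shorter than the paper's, since it avoids the Lebesgue--differentiation technicalities of Lemma~\ref{l: step3}. A secondary issue: in your order, the bound $\text{bmo}^j\ci{\bfD}\lesssim\text{bmo}\ci{\bfD}$ for the ``$B(y)$-piece'' naturally produces $\Vert B^\ast\Vert\ci{\text{bmo}\ci{\bfD}(V',U',p')}$ rather than $\Vert B\Vert\ci{\text{bmo}\ci{\bfD}(U,V,p)}$, so you need \eqref{mainBMOdyadicequivalence} (or at least the adjoint symmetry) before \eqref{secondaryBMOdyadicequivalence}, not after---be careful to avoid circularity.
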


The proof of Proposition \ref{prop:equivalencesdyadiclittleBMO} will be accomplished in several steps. Our main strategy consists in reducing its content to its one-parameter counterpart \cite[Lemma 4.7]{isralowitz-pott-treil}. That is, we prove first an analog of \cite[Proposition 4.2]{holmes-petermichl-wick}.

We fix the following pieces of notation. For all $x_1\in\R^n$, we set $U_{x_1}(x_2):=U(x_1,x_2)$, $x_2\in \R^m$. For a.e.~$x_1\in \R^n$, we denote by $\cU\ci{x_1,Q}$ the reducing operator of $U_{x_1}$ over any cube $Q$ in $\R^m$ with respect to the exponent $p$, and we define $U\ci{Q}(x_1):=\cU\ci{x_1,Q}^{p}$. If $P,Q$ are cubes in $\R^n,\R^m$ respectively, we denote by $\cU\ci{Q,P}$ the reducing operator of $U\ci{Q}$ over $P$ with respect to the exponent $p$, and by $\cU\ci{Q,P}'$ the reducing operator of $U\ci{Q}':=(U\ci{Q})^{-1/(p-1)}$ over $P$ with respect to the exponent $p'$. We fix similar pieces of notation for $V$, and also the obvious symmetric pieces of notation. We also denote $B_{x_1}(x_2)=B_{x_2}(x_1):=B(x_1,x_2)$ for $(x_1,x_2)\in\R^n\times\R^m$. Finally we write in general $x=(x_1,x_2)\in\R^n\times\R^m$.

\begin{lm}
\label{l: step1}
There holds
\begin{equation*}
\Vert B\Vert\ci{\emph{bmo}\ci{\bfD}(U,V,p)}\lesssim\max(\esssup_{x_2\in\R^m}\Vert B_{x_2}\Vert\ci{\emph{BMO}\ci{\cD^1}(U_{x_2},V_{x_2},p)},\esssup_{x_1\in\R^n}\Vert B_{x_1}\Vert\ci{\emph{BMO}\ci{\cD^2}(U_{x_1},V_{x_1},p)}),
\end{equation*}
where the implied constant depends only on $p,d$ and $[V]\ci{A_p,\bfD}$.
\end{lm}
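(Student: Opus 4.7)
My approach is to follow the scalar strategy of \cite{holmes-petermichl-wick}, adapted to the matrix setting via the reducing-operator machinery of \cite{isralowitz-pott-treil}. I fix a rectangle $R=P\times Q\in\bfD$ and decompose
\begin{equation*}
B(x_1,x_2)-\La B\Ra\ci{R} = \bigl(B(x_1,x_2)-\La B_{x_1}\Ra\ci{Q}\bigr) + \bigl(\La B_{x_1}\Ra\ci{Q}-\La B\Ra\ci{R}\bigr) =:\Delta_1+\Delta_2,
\end{equation*}
so that it suffices to bound the two pieces $I_j:=\strokedint_R|V(x)^{1/p}\Delta_j\cU\ci{R}^{-1}|^p\,\mathd x$ by the first and second terms of the claimed maximum respectively.

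For $I_1$, I apply the column-wise factorization
\begin{equation*}
|V(x)^{1/p}\Delta_1\cU\ci{R}^{-1}e_k|\le|V(x)^{1/p}\Delta_1\cU\ci{x_1,Q}^{-1}|\cdot|\cU\ci{x_1,Q}\cU\ci{R}^{-1}e_k|,
\end{equation*}
raise to the $p$-th power and sum in $k$; since $\Delta_1(x_1,\cdot)=B_{x_1}-\La B_{x_1}\Ra\ci{Q}$, the inner $x_2$-integral over $Q$ is dominated by $|\cU\ci{x_1,Q}\cU\ci{R}^{-1}|^p\|B_{x_1}\|^p\ci{\text{BMO}\ci{\cD^2}(U_{x_1},V_{x_1},p)}$. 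Integrating in $x_1\in P$, the remaining factor $\strokedint_P|\cU\ci{x_1,Q}\cU\ci{R}^{-1}|^p\,\mathd x_1$ is $\sim 1$ by another column-trick application combined with the iteration identity \eqref{iterate reducing operators} (which yields $|\cU\ci{Q,P}e|\sim|\cU\ci{R}e|$ and hence $|\cU\ci{Q,P}\cU\ci{R}^{-1}e_k|\sim|e_k|$).

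For $I_2$, since $\Delta_2$ depends only on $x_1$, integrating out $x_2$ via the column-trick identity $\strokedint_Q|V(x_1,x_2)^{1/p}M|^p\,\mathd x_2\sim_d|\cV\ci{x_1,Q}M|^p$ and replacing $\cU\ci{R}^{-1}$ by $\cU\ci{Q,P}^{-1}$ through \eqref{iterate reducing operators} reduces the task to controlling the one-parameter $\text{BMO}\ci{\cD^1}(U_Q,V_Q,p)$-norm of $G_2(x_1):=\strokedint_Q B(x_1,y_2)\,\mathd y_2$ at the cube $P$, where $U_Q(x_1):=\cU\ci{x_1,Q}^p$ and $V_Q(x_1):=\cV\ci{x_1,Q}^p$. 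Using Lemma~\ref{l: two-weight-biparameter A_p implies uniform A_p in each coordinate} together with \eqref{iterate reducing operators} one verifies that $U_Q,V_Q$ are genuine $\cD^1$-dyadic matrix $A_p$ weights with constants $\lesssim [U]\ci{A_p,\bfD},[V]\ci{A_p,\bfD}$, so the one-parameter equivalence \eqref{matrix valued BMO equivalence} of \cite{isralowitz-pott-treil} allows one to switch to $\wt{\text{BMO}}\ci{\cD^1}(U_Q,V_Q,p)$. Writing $G_2-\La G_2\Ra\ci{P}=\strokedint_Q(B(\cdot,y_2)-\La B_{y_2}\Ra\ci{P})\,\mathd y_2$, Minkowski's inequality and the three-factor bound
\begin{equation*}
|\cV\ci{Q,P}M\cU\ci{Q,P}^{-1}|\le|\cV\ci{Q,P}\cV\ci{y_2,P}^{-1}|\cdot|\cV\ci{y_2,P}M\cU\ci{y_2,P}^{-1}|\cdot|\cU\ci{y_2,P}\cU\ci{Q,P}^{-1}|
\end{equation*}
with $M=B(x_1,y_2)-\La B_{y_2}\Ra\ci{P}$ put the slice norm $\|B_{y_2}\|\ci{\wt{\text{BMO}}\ci{\cD^1}(U_{y_2},V_{y_2},p)}$ in the middle after the $x_1$-integration over $P$.

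The main obstacle is to bound
\begin{equation*}
\strokedint_Q|\cV\ci{Q,P}\cV\ci{y_2,P}^{-1}|\cdot|\cU\ci{y_2,P}\cU\ci{Q,P}^{-1}|\,\mathd y_2
\end{equation*}
by a constant depending only on $p,d,[U]\ci{A_p,\bfD},[V]\ci{A_p,\bfD}$. I apply H\"older in $y_2$ with exponents $p',p$. The $\cU$-factor $\strokedint_Q|\cU\ci{y_2,P}\cU\ci{Q,P}^{-1}|^p\,\mathd y_2$ is $\sim 1$ via the column trick, the identity $|\cU\ci{y_2,P}e|^p\sim\strokedint_P|U(x_1,y_2)^{1/p}e|^p\,\mathd x_1$, Fubini, and \eqref{iterate reducing operators}. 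The more delicate $\cV$-factor $\strokedint_Q|\cV\ci{Q,P}\cV\ci{y_2,P}^{-1}|^{p'}\,\mathd y_2$ is handled by using both parts of Lemma~\ref{l: replace inverse by prime} --- together with Lemma~\ref{l: two-weight-biparameter A_p implies uniform A_p in each coordinate} to keep constants uniform in $y_2$ --- to pass to the averaged Muckenhoupt-dual matrices $\La V^{1/p}\Ra\ci{R}$ and $\La V^{-1/p}(\cdot,y_2)\Ra\ci{P}$. A Minkowski/Jensen reduction then yields the biparameter quantity $\strokedint_R\strokedint_R|V^{1/p}(x)V(y)^{-1/p}|^{p'}\,\mathd x\,\mathd y$, which is controlled by $[V]\ci{A_p,\bfD}$ --- directly from the definition via Jensen when $p\ge 2$, and by passing to the dual weight $V'=V^{-1/(p-1)}$ (using $[V']\ci{A_{p'},\bfD}^{1/p'}\sim [V]\ci{A_p,\bfD}^{1/p}$) when $p\le 2$. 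A final application of \eqref{matrix valued BMO equivalence} converts $\wt{\text{BMO}}$ back to $\text{BMO}$ on the slice, completing the argument.
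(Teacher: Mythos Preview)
Your decomposition and your treatment of $I_1$ match the paper's. The difference is in $I_2$: the paper proceeds much more directly. After writing $\La B_{x_1}\Ra\ci{Q}-\La B\Ra\ci{R}=\strokedint_{Q}(B(x_1,y_2)-\La B_{y_2}\Ra\ci{P})\,\mathd y_2$, it inserts $V(x_1,y_2)^{-1/p}V(x_1,y_2)^{1/p}$ and applies H\"older in $y_2$ with exponents $p',p$. This produces the factor $|\cV'\ci{x_1,Q}\cV\ci{x_1,Q}|^{p}\lesssim[V]\ci{A_p,\bfD}$ (via Lemma~\ref{l: two-weight-biparameter A_p implies uniform A_p in each coordinate}) together with an $L^p$ average in $y_2$ that, after Fubini and the insertion of $\cU\ci{y_2,P}^{-1}\cU\ci{y_2,P}$, is bounded directly by $C_1^{p}$ times $\strokedint_{Q}|\cU\ci{y_2,P}\cU\ci{R}^{-1}|^{p}\,\mathd y_2\sim 1$. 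No appeal to \eqref{matrix valued BMO equivalence} is needed, and the constant depends only on $p,d,[V]\ci{A_p,\bfD}$ as asserted. Your route through $\Vert G_2\Vert\ci{\text{BMO}\ci{\cD^1}(U_Q,V_Q,p)}\sim\Vert G_2\Vert\ci{\wt{\text{BMO}}\ci{\cD^1}(U_Q,V_Q,p)}$ already brings in $[U]\ci{A_p,\bfD}$, so even if the rest succeeded you would not prove the lemma exactly as stated.

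More seriously, there is a genuine gap in your ``$\cV$-factor'' step. The quantity you reduce to, $\strokedint_{R}\strokedint_{R}|V(x)^{1/p}V(y)^{-1/p}|^{p'}\,\mathd x\,\mathd y$, is \emph{not} controlled by $[V]\ci{A_p,\bfD}$ when $p<2$: already for $d=1$ this equals $\La w^{1/(p-1)}\Ra\ci{R}\La w^{-1/(p-1)}\Ra\ci{R}$, and for $p=3/2$, $w(x_1,x_2)=|x_1|^{-3/4}\in A_{3/2}(\R\times\R)$ one has $\La w^{2}\Ra\ci{R}=\La|x_1|^{-3/2}\Ra\ci{R}=\infty$ whenever $R$ meets $\{x_1=0\}$. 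Passing to $V'$ does not help, since that merely swaps the roles of $p$ and $p'$. The fix is to skip the passage to the averaged matrices $\La V^{1/p}\Ra\ci{R}$, $\La V^{-1/p}(\cdot,y_2)\Ra\ci{P}$ altogether: use only $|\cV\ci{y_2,P}^{-1}e|\le|\cV'\ci{y_2,P}e|$ from Lemma~\ref{l: replace inverse by prime}(1), unfold the defining $L^{p'}$ average of $\cV'\ci{y_2,P}$, and apply Fubini over $P\times Q$ to obtain $\strokedint_{Q}|\cV\ci{Q,P}\cV\ci{y_2,P}^{-1}|^{p'}\,\mathd y_2\lesssim|\cV'\ci{R}\cV\ci{R}|^{p'}\lesssim[V]\ci{A_p,\bfD}^{p'/p}$ for all $1<p<\infty$. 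With this correction your $I_2$ argument goes through, with the extra $[U]\ci{A_p,\bfD}$-dependence noted above.
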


\begin{proof}
The proof combines and adapts parts of the proofs of \cite[Propositions 4.2, 4.5]{holmes-petermichl-wick}. Set
\begin{equation*}
C_1:=\esssup_{x_2\in\R^m}\Vert B_{x_2}\Vert\ci{\text{BMO}\ci{\cD^1}(U_{x_2},V_{x_2},p)},\qquad
C_2:=\esssup_{x_1\in\R^n}\Vert B_{x_1}\Vert\ci{\text{BMO}\ci{\cD^2}(U_{x_1},V_{x_1},p)}.
\end{equation*}
Without loss of generality we may assume that $C_1,C_2<\infty$. Let $R=R_1\times R_2\in\bfD$ be arbitrary. Then, we have
\begin{align*}
\left(\strokedint_{R}|V(x)^{1/p}(B(x)-\La B\Ra\ci{R})\cU\ci{R}^{-1}|^{p}\mathd x\right)^{1/p}
&\leq\left(\strokedint_{R}|V(x)^{1/p}(B(x)-\La B_{x_1}\Ra\ci{R_2})\cU\ci{R}^{-1}|^{p}\mathd x\right)^{1/p}\\
&+\left(\strokedint_{R}|V(x)^{1/p}(\La B_{x_1}\Ra\ci{R_2}-\La B\Ra\ci{R})\cU\ci{R}^{-1}|^{p}\mathd x\right)^{1/p}.
\end{align*}
We observe that
\begin{align*}
&\strokedint_{R}|V(x)^{1/p}(B(x)-\La B_{x_1}\Ra\ci{R_2})\cU\ci{R}^{-1}|^{p}\mathd x\\
&\leq\strokedint_{R_1}\left(\strokedint_{R_2}|V(x_1,x_2)^{1/p}(B_{x_1}(x_2)-\La B_{x_1}\Ra\ci{R_2})\cU\ci{x_1,R_2}^{-1}|^{p}\cdot|\cU\ci{x_1,R_2}\cU\ci{R}^{-1}|^{p}\mathd x_2\right)\mathd x_1\\
&\leq C_2^{p}\strokedint_{R_1}|\cU\ci{x_1,R_2}\cU\ci{R}^{-1}|^{p}\mathd x_1\sim_{p,d}C_2^{p}|\cU\ci{R_2,R_1}\cU\ci{R}^{-1}|^{p}\sim_{p,d}C_2^{p}|\cU\ci{R}\cU\ci{R}^{-1}|^{p}=C_2^{p},
\end{align*}
where in the last $\sim_{p,d}$ we used \eqref{iterate reducing operators}. Moreover, we have
\begin{align*}
&\strokedint_{R}|V(x)^{1/p}(\La B_{x_1}\Ra\ci{R_2}-\La B\Ra\ci{R})\cU\ci{R}^{-1}|^{p}\mathd x\sim_{p,d}\strokedint_{R_1}|\cV\ci{x_1,R_2}(\La B_{x_1}\Ra\ci{R_2}-\La B\Ra\ci{R})\cU\ci{R}^{-1}|^{p}\mathd x_1\\
&\leq\strokedint_{R_1}\left(\strokedint_{R_2}|\cV\ci{x_1,R_2}(B(x_1,y_2)-\La B(\fdot,y_2)\Ra\ci{R_1})\cU\ci{R}^{-1}|\mathd y_2\right)^{p}\mathd x_1\\
&\leq\strokedint_{R_1}\left(\strokedint_{R_2}|\cV\ci{x_1,R_2}V(x_1,y_2)^{-1/p}|^{p'}\mathd y_2\right)^{p/p'}\\
&\cdot\left(\strokedint_{R_2}|V(x_1,y_2)^{1/p}(B_{y_2}(x_1)-\La B_{y_2}\Ra\ci{R_1})\cU\ci{R}^{-1}|^{p}\mathd y_2\right)\mathd x_1\\
&\sim_{p,d}\strokedint_{R_1}|\cV'\ci{x_1,R_2}\cV\ci{x_1,R_2}|^{p}\cdot\left(\strokedint_{R_2}|V(x_1,y_2)^{1/p}(B_{y_2}(x_1)-\La B_{y_2}\Ra\ci{R_1})\cU\ci{R}^{-1}|^{p}\mathd y_2\right)\mathd x_1\\
&\lesssim_{p,d}[V]\ci{A_p,\bfD}\strokedint_{R_2}|\cU\ci{y_2,R_1}\cU^{-1}\ci{R}|^{p}\left(\strokedint_{R_1}|V(x_1,y_2)^{1/p}(B_{y_2}(x_1)-\La B_{y_2}\Ra\ci{R_1})\cU\ci{y_2,R_1}^{-1}|^{p}\mathd x_1\right)\mathd y_2\\
&\leq C_1^{p}[V]\ci{A_p,\bfD}\strokedint_{R_2}|\cU\ci{y_2,R_1}\cU^{-1}\ci{R}|^{p}\mathd y_2\sim_{p,d}C_1^{p}[V]\ci{A_p,\bfD}|\cU\ci{R_2,R_1}\cU^{-1}\ci{R}|^{p}\\
&\sim_{p,d}C_1^{p}[V]\ci{A_p,\bfD}|\cU\ci{R}\cU^{-1}\ci{R}|^{p}=C_1^{p}[V]\ci{A_p,\bfD},
\end{align*}
where in the first $\lesssim_{p,d}$ we used Lemma \ref{l: two-weight-biparameter A_p implies uniform A_p in each coordinate}, and in the last $\sim_{p,d}$ we used \eqref{iterate reducing operators}, concluding the proof.
\end{proof}

\begin{lm}
\label{l: step2}
There holds
\begin{equation*}
\Vert B\Vert\ci{\wt{\emph{bmo}}\ci{\bfD}(U,V,p)}\lesssim_{p,d}[V]\ci{A_p,\bfD}^{1/p}\Vert B\Vert\ci{\emph{bmo}\ci{\bfD}(U,V,p)}.
\end{equation*}
\end{lm}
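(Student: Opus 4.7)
Fix an arbitrary $R\in\bfD$. The plan is to insert a factor of $V(x)^{-1/p}V(x)^{1/p}$ between $\cV\ci{R}$ and $B(x)-\La B\Ra\ci{R}$, apply submultiplicativity of the matrix norm, and then separate the two factors by H\"{o}lder's inequality with exponents $p'$ and $p$. This yields
\begin{equation*}
\strokedint_{R}|\cV\ci{R}(B(x)-\La B\Ra\ci{R})\cU\ci{R}^{-1}|\,\mathd x\leq \mathrm{I}\cdot\mathrm{II},
\end{equation*}
where
\begin{equation*}
\mathrm{I}:=\left(\strokedint_{R}|\cV\ci{R}V(x)^{-1/p}|^{p'}\mathd x\right)^{1/p'},\qquad \mathrm{II}:=\left(\strokedint_{R}|V(x)^{1/p}(B(x)-\La B\Ra\ci{R})\cU\ci{R}^{-1}|^{p}\mathd x\right)^{1/p}.
\end{equation*}
The factor $\mathrm{II}$ is bounded by $\Vert B\Vert\ci{\text{bmo}\ci{\bfD}(U,V,p)}$ by definition, so the whole task reduces to showing that
\begin{equation*}
\mathrm{I}\lesssim_{p,d}[V]\ci{A_p,\bfD}^{1/p}.
\end{equation*}

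To bound $\mathrm{I}$, I would use \eqref{equivalence_matrix_norm_columns} together with the self-adjointness of $\cV\ci{R}$ and $V(x)^{-1/p}$, which gives the identity $|\cV\ci{R}V(x)^{-1/p}|=|V(x)^{-1/p}\cV\ci{R}|$ and thus
\begin{equation*}
|\cV\ci{R}V(x)^{-1/p}|\sim_{d}\sum_{k=1}^{d}|V(x)^{-1/p}\cV\ci{R}e_k|.
\end{equation*}
Raising to the $p'$-th power and averaging over $R$, then applying the defining property \eqref{reducing_matrices_definition} of the reducing operator $\cV\ci{R}'$ of $V'=V^{-p'/p}$ with respect to exponent $p'$ to each column vector $\cV\ci{R}e_k$, we obtain
\begin{equation*}
\mathrm{I}\lesssim_{p,d}\sum_{k=1}^{d}\left(\strokedint_{R}|V(x)^{-1/p}\cV\ci{R}e_k|^{p'}\mathd x\right)^{1/p'}\sim_{p,d}\sum_{k=1}^{d}|\cV\ci{R}'\cV\ci{R}e_k|\sim_{d}|\cV\ci{R}'\cV\ci{R}|.
\end{equation*}
Finally, the characterization of the dyadic matrix $A_p$ constant recalled in Subsection~\ref{s:Ap}, namely
$$[V]\ci{A_p,\bfD}\sim_{p,d}\sup_{R\in\bfD}|\cV\ci{R}'\cV\ci{R}|^{p},$$
yields $|\cV\ci{R}'\cV\ci{R}|\lesssim_{p,d}[V]\ci{A_p,\bfD}^{1/p}$. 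Combining the estimates and taking the supremum over $R\in\bfD$ completes the proof.

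The main (and really only) subtlety is keeping careful track of the interplay between matrix norms and action on vectors when passing from the matrix-valued expression $|V(x)^{-1/p}\cV\ci{R}|$ to the scalar expressions $|V(x)^{-1/p}\cV\ci{R}e_k|$ to which the reducing operator bound \eqref{reducing_matrices_definition} directly applies; once \eqref{equivalence_matrix_norm_columns} is invoked this is routine. Everything else is H\"{o}lder's inequality and the standard reducing-operator reformulation of the $A_p$ condition.
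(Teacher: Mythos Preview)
Your proof is correct and follows essentially the same route as the paper's own argument: insert $V(x)^{-1/p}V(x)^{1/p}$, split by submultiplicativity, apply H\"older with exponents $p'$ and $p$, and then identify $\left(\strokedint_{R}|\cV\ci{R}V(x)^{-1/p}|^{p'}\mathd x\right)^{1/p'}\sim_{p,d}|\cV\ci{R}'\cV\ci{R}|\lesssim_{p,d}[V]\ci{A_p,\bfD}^{1/p}$. The only difference is cosmetic: the paper writes the equivalence $\left(\strokedint_{R}|\cV\ci{R}V(x)^{-1/p}|^{p'}\mathd x\right)^{1/p'}\sim_{p,d}|\cV\ci{R}'\cV\ci{R}|$ in one line, while you unpack it via \eqref{equivalence_matrix_norm_columns} and the columnwise definition \eqref{reducing_matrices_definition} of $\cV\ci{R}'$.
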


\begin{proof}
In the one-parameter case this is proved in \cite[Lemma 4.4]{isralowitz-pott-treil}, and the same proof works without any changes in the present two-parameter setting. For reasons of completeness, we repeat the short argument here. For all $R\in\bfD$, we have
\begin{align*}
&\strokedint_{R}|\cV\ci{R}(B(x)-\La B\Ra\ci{R})\cU^{-1}\ci{R}|\mathd x\leq
\strokedint_{R}|\cV\ci{R}V(x)^{-1/p}|\cdot|V(x)^{1/p}(B(x)-\La B\Ra\ci{R})\cU^{-1}\ci{R}|\mathd x\\
&\leq\left(\strokedint_{R}|\cV\ci{R}V(x)^{-1/p}|^{p'}\mathd x\right)^{1/p'}\left(\strokedint_{R}|V(x)^{1/p}(B(x)-\La B\Ra\ci{R})\cU^{-1}\ci{R}|^{p}\mathd x\right)^{1/p}\\
&\sim_{p,d}|\cV'\ci{R}\cV\ci{R}|\left(\strokedint_{R}|V(x)^{1/p}(B(x)-\La B\Ra\ci{R})\cU^{-1}\ci{R}|^{p}\mathd x\right)^{1/p}\\
&\lesssim_{p,d}[V]\ci{A_p,\bfD}^{1/p}\left(\strokedint_{R}|V(x)^{1/p}(B(x)-\La B\Ra\ci{R})\cU^{-1}\ci{R}|^{p}\mathd x\right)^{1/p}.
\end{align*}
\end{proof}

\begin{lm}
\label{l: step3}
There holds
\begin{equation*}
\Vert B\Vert\ci{\wt{\emph{bmo}}\ci{\bfD}(U,V,p)}\gtrsim_{p,d}\max(\esssup_{x_2\in\R^m}\Vert B_{x_2}\Vert\ci{\wt{\emph{BMO}}\ci{\cD^1}(U_{x_2},V_{x_2},p)},\esssup_{x_1\in\R^n}\Vert B_{x_1}\Vert\ci{\wt{\emph{BMO}}\ci{\cD^2}(U_{x_1},V_{x_1},p)}).
\end{equation*}
\end{lm}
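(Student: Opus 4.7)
The strategy is to adapt the scalar Lebesgue differentiation argument of \cite{holmes-petermichl-wick} that yielded \eqref{holmes-petermichl-wick little BMO}, with two new ingredients needed to handle the matrix setting: a duality test-vector trick that reduces the matrix oscillation under the integral to a scalar oscillation amenable to scalar Lebesgue differentiation, and a use of Lemmas~\ref{l: replace inverse by prime} and~\ref{l: two-weight-biparameter A_p implies uniform A_p in each coordinate} to identify the limiting reducing operators with the reducing operators of the slice weights. By symmetry it suffices to establish the bound with $\esssup_{x_2}$. Fix $R_1\in\cD^1$ and $u,v\in\C^d$, and for $R_2\in\cD^2$ set $R:=R_1\times R_2$. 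From the trivial estimate $|A|\geq|\La A\xi,\eta\Ra|/(|\xi|\,|\eta|)$ applied with $\xi=\cU\ci{R}u$, $\eta=\cV^{-1}\ci{R}v$, together with the self-adjointness of $\cV\ci{R}$, one obtains $|\cV\ci{R}M\cU^{-1}\ci{R}|\geq|\La Mu,v\Ra|/(|\cU\ci{R}u|\cdot|\cV^{-1}\ci{R}v|)$ for any matrix $M$. Applying this with $M=B(y)-\La B\Ra\ci{R}$ and integrating over $R$, the defining bound of $\Vert B\Vert\ci{\wt{\text{bmo}}\ci{\bfD}(U,V,p)}$ gives
\begin{equation*}
\Vert B\Vert\ci{\wt{\text{bmo}}\ci{\bfD}(U,V,p)}\cdot|\cU\ci{R_1\times R_2}u|\cdot|\cV^{-1}\ci{R_1\times R_2}v|\geq\strokedint_{R_1\times R_2}|\La(B(y)-\La B\Ra\ci{R_1\times R_2})u,v\Ra|\,\mathd y,
\end{equation*}
whose right-hand side is the scalar oscillation of $\phi_{u,v}(y):=\La B(y)u,v\Ra\in L^1\ti{loc}(\R^{n+m})$ over $R_1\times R_2$.

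Now let $R_2$ shrink to $x_2\in\R^m$ dyadically. For a.e.\ $x_2$, the right-hand side converges to $\strokedint_{R_1}|\La(B(y_1,x_2)-\La B(\fdot,x_2)\Ra\ci{R_1})u,v\Ra|\,\mathd y_1$ by the scalar argument of \cite{holmes-petermichl-wick}, which combines the $L^1(R_1)$-valued Lebesgue differentiation of $y_2\mapsto\phi_{u,v}(\fdot,y_2)$ with the convergence $\La\phi_{u,v}\Ra\ci{R_1\times R_2}\to\La\phi_{u,v}(\fdot,x_2)\Ra\ci{R_1}$. On the left-hand side, the defining relation~\eqref{reducing_matrices_definition} together with Lebesgue differentiation applied to the locally integrable function $y_2\mapsto\strokedint_{R_1}|U(y_1,y_2)^{1/p}u|^p\,\mathd y_1$ yields $|\cU\ci{R_1\times R_2}u|^p\to|\cU\ci{x_2,R_1}u|^p$ up to $\sim_d$ constants, and an analogous computation gives $\limsup_{R_2}|\cV^{-1}\ci{R_1\times R_2}v|\lesssim_{p,d}|\cV^{-1}\ci{x_2,R_1}v|$, where the passage through $\cV'$ (required because $\cV^{-1}$ is not itself a reducing operator) uses Lemma~\ref{l: replace inverse by prime}(1), with the associated $A_p$-type constant of $V(\fdot,x_2)$ controlled by $[V]\ci{A_p,\bfD}$ via Lemma~\ref{l: two-weight-biparameter A_p implies uniform A_p in each coordinate}(1). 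Intersecting null sets over the countable collection $(R_1,u,v)\in\cD^1\times(\Q+i\Q)^d\times(\Q+i\Q)^d$ and invoking the joint continuity of both sides in $(u,v)$, one obtains, for a.e.\ $x_2\in\R^m$ and simultaneously for every $R_1\in\cD^1$ and $u,v\in\C^d$,
\begin{equation*}
\Vert B\Vert\ci{\wt{\text{bmo}}\ci{\bfD}(U,V,p)}\cdot|\cU\ci{x_2,R_1}u|\cdot|\cV^{-1}\ci{x_2,R_1}v|\gtrsim_{p,d}\strokedint_{R_1}|\La(B(y_1,x_2)-\La B(\fdot,x_2)\Ra\ci{R_1})u,v\Ra|\,\mathd y_1.
\end{equation*}

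To finish, I specialize $u=\cU^{-1}\ci{x_2,R_1}e_k$ and $v=\cV\ci{x_2,R_1}e_j$ for $j,k=1,\ldots,d$, so that the left-hand side collapses to $\Vert B\Vert\ci{\wt{\text{bmo}}\ci{\bfD}(U,V,p)}$ and the integrand on the right becomes exactly $|\La\cV\ci{x_2,R_1}(B(y_1,x_2)-\La B(\fdot,x_2)\Ra\ci{R_1})\cU^{-1}\ci{x_2,R_1}e_k,e_j\Ra|$. Summing over $j,k$ and using the equivalence $|M|\sim_d\sum_{j,k}|\La Me_k,e_j\Ra|$ (a standard consequence of \eqref{equivalence_matrix_norm_columns} applied to both $M$ and its columns) restores the full matrix norm under the integral, yielding $\Vert B\Vert\ci{\wt{\text{bmo}}\ci{\bfD}(U,V,p)}\gtrsim_{p,d}\Vert B(\fdot,x_2)\Vert\ci{\wt{\text{BMO}}\ci{\cD^1}(U_{x_2},V_{x_2},p)}$ for a.e.\ $x_2\in\R^m$ after taking supremum over the countable grid $\cD^1$; the case of $\esssup_{x_1}$ is entirely symmetric. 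The main technical subtlety is the null-set bookkeeping: all the pointwise Lebesgue differentiation limits must hold outside a single null set independent of $R_1,u,v$, which the countability of $\cD^1$, a countable dense subset of $\C^d$, and the continuity argument together permit.
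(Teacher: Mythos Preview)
Your argument is correct and follows a genuinely different route from the paper's. The paper works directly with the matrix oscillation $|\cV\ci{R}(B-\La B\Ra\ci{R})\cU\ci{R}^{-1}|$ and therefore needs two pieces of machinery: a custom density lemma (Lemma~\ref{l: from rational to weak Lebesgue differentiation}) that upgrades Lebesgue differentiation from rational to arbitrary matrix multipliers, and the result from \cite[Lemma~3.6]{matrix journe} producing \emph{matrix}-level subsequential limits $\cV\ci{P\times Q_k}\to\cV\ci{P,2}$ with $|\cV\ci{P,2}e|\sim_{p,d}|\cV\ci{x_2,P}e|$, from which $|\cV\ci{P,2}^{-1}e|\sim_{p,d}|\cV\ci{x_2,P}^{-1}e|$ follows by \cite[Lemma~3.1]{matrix journe}. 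Your test-vector reduction to the scalar oscillation of $\phi_{u,v}=\La Bu,v\Ra$ sidesteps both: scalar Lebesgue differentiation applies directly, and the reducing operators enter only through $|\cU\ci{R}u|$ and $|\cV\ci{R}^{-1}v|$, which are scalar quantities attached to fixed vectors. This is more elementary and self-contained.

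One quantitative point deserves care. Your passage ``$\limsup|\cV^{-1}\ci{R_1\times R_2}v|\lesssim_{p,d}|\cV^{-1}\ci{x_2,R_1}v|$ via $\cV'$'' actually picks up a factor $[V]\ci{A_p,\bfD}^{1/p}$: the step $|\cV'\ci{x_2,R_1}v|\lesssim|\cV^{-1}\ci{x_2,R_1}v|$ is the \emph{hard} direction of Lemma~\ref{l: replace inverse by prime}(1) and carries the slice characteristic $[V_{x_2}]\ci{A_p,\cD^1}^{1/p}$. So your proof, as written, yields $\gtrsim_{p,d,[V]\ci{A_p,\bfD}}$ rather than the stated $\gtrsim_{p,d}$. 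This is harmless for Proposition~\ref{prop:equivalencesdyadiclittleBMO}, whose constants already depend on $[V]\ci{A_p,\bfD}$, but it does not match the lemma as stated. If you want the clean $\gtrsim_{p,d}$, avoid the $\cV'$ detour: write $|\cV\ci{R}^{-1}v|=\sup_{w\neq0}|\La v,w\Ra|/|\cV\ci{R}w|$, use \eqref{reducing_matrices_definition} to replace $|\cV\ci{R}w|$ by $(\strokedint_{R}|V^{1/p}w|^{p})^{1/p}$, and note that pointwise convergence of these norms on $\C^d$ (which your Lebesgue-differentiation argument already gives for a countable dense set of $w$, extended by equicontinuity since $|V|$ is locally integrable) implies convergence of the dual norms; this recovers $\limsup|\cV^{-1}\ci{R_1\times R_2}v|\lesssim_{d}|\cV^{-1}\ci{x_2,R_1}v|$ with no $A_p$ loss. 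The paper's route achieves the same end by extracting matrix limits instead.
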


To prove Lemma \ref{l: step3} we need the following technical lemma.

\begin{lm}
\label{l: from rational to weak Lebesgue differentiation}
Let $F\in L^1\ti{loc}(\R^{n+m};M_{d}(\C))$ be arbitrary. Let $x_2\in\R^m$ such that $F(\fdot,x_2)\in L^1\ti{loc}(\R^n;M_{d}(\C))$, and let $(Q_k)^{\infty}_{k=1}$ be a sequence of cubes in $\R^m$ shrinking to $x_2$. Assume that for \emph{some} cube $P$ in $\R^n$ (possibly depending on $F$), there holds
\begin{equation*}
\strokedint_{P}|AF(x_1,x_2)B|\mathd x_1=\lim_{k\rightarrow\infty}\strokedint_{P\times Q_k}|AF(x_1,y_2)B|\mathd x_1\mathd y_2,\qquad\forall A,B\in M_{d}(\Q+i\Q).
\end{equation*}
Then, there holds
\begin{equation*}
\strokedint_{P}|AF(x_1,x_2)B|\mathd x_1\leq\liminf_{k\rightarrow\infty}\strokedint_{P\times Q_k}|AF(x_1,y_2)B|\mathd x_1\mathd y_2,\qquad\forall A,B\in M_{d}(\C).
\end{equation*}
\end{lm}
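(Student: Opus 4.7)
The plan is a straightforward density approximation. For arbitrary $A,B\in M_d(\C)$, I would approximate them by matrices $A',B'\in M_d(\Q+i\Q)$, apply the hypothesis to $(A',B')$, and control the resulting error uniformly in $k$.

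The key pointwise estimate is the reverse triangle inequality for the matrix norm combined with its submultiplicativity:
\begin{equation*}
\left||AF(x_1,y_2)B|-|A'F(x_1,y_2)B'|\right|\leq |AF(x_1,y_2)B-A'F(x_1,y_2)B'|\leq \bigl(|A|\,|B-B'|+|A-A'|\,|B'|\bigr)\,|F(x_1,y_2)|.
\end{equation*}
Given $\eta>0$, I would choose $A',B'\in M_d(\Q+i\Q)$ with $|A-A'|,|B-B'|<\eta$, so that $|B'|\leq|B|+\eta$ and the right-hand side above is at most $C(\eta)\,|F(x_1,y_2)|$ with $C(\eta):=\eta(|A|+|B|+\eta)\to 0$ as $\eta\to 0$. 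Applying the hypothesis with the rational choice $A=B=I_d$ (the identity matrix) then gives
\begin{equation*}
\lim_{k\to\infty}\strokedint_{P\times Q_k}|F(x_1,y_2)|\,\mathd x_1\,\mathd y_2=\strokedint_P|F(x_1,x_2)|\,\mathd x_1=:M_0<\infty,
\end{equation*}
so these averages are bounded by $2M_0$ uniformly for all sufficiently large $k$.

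Writing $I(A,B)$ and $J_k(A,B)$ respectively for the left- and right-hand side averages in the target inequality, integrating the pointwise bound yields $|I(A,B)-I(A',B')|\leq C(\eta)M_0$ and, for large $k$, $|J_k(A,B)-J_k(A',B')|\leq 2C(\eta)M_0$. Combining these with the hypothesis $\lim_{k}J_k(A',B')=I(A',B')$ gives
\begin{equation*}
I(A,B)\leq I(A',B')+C(\eta)M_0=\lim_{k\to\infty}J_k(A',B')+C(\eta)M_0\leq\liminf_{k\to\infty}J_k(A,B)+3C(\eta)M_0,
\end{equation*}
and sending $\eta\to 0$ finishes the proof. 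No substantial obstacle is expected; the only step requiring care is recognizing that the identity-matrix instance of the hypothesis is precisely what supplies the uniform-in-$k$ bound on $\strokedint_{P\times Q_k}|F|$ needed to absorb the approximation error in $J_k$.
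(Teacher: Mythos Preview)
Your proposal is correct and follows essentially the same approach as the paper: approximate $A,B$ by rational matrices, use the hypothesis with $A=B=I_d$ to uniformly control $\strokedint_{P\times Q_k}|F|$, and absorb the approximation errors via the (reverse) triangle inequality. The only cosmetic differences are that the paper uses a three-term telescoping and a sequential limit $\ell\to\infty$ rather than your two-term telescoping and $\eta\to 0$ argument.
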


\begin{proof}
Let $A,B\in M_{d}(\C)$ be arbitrary. Pick sequences $(A_{\ell})^{\infty}_{\ell=1},(B_{\ell})^{\infty}_{\ell=1}$ in $M_{d}(\Q+i\Q)$ such that $\lim_{\ell\rightarrow\infty}A_{\ell}=A$, $\lim_{m\rightarrow\infty}B_{\ell}=B$. Then, since by assumption $F(\fdot,x_2)\in L^1\ti{loc}(\R^n;M_d(\C))$ by the Theorem of Dominated Convergence we deduce
\begin{equation*}
\strokedint_{P}|AF(x_1,x_2)B|\mathd x_1=\lim_{\ell\rightarrow\infty}\strokedint_{P}|A_{\ell}F(x_1,x_2)B_{\ell}|\mathd x_1.
\end{equation*}
We observe now that for all $k,\ell=1,2,\ldots$ we have
\begin{align*}
&\strokedint_{P\times Q_k}|A_{\ell}F(x_1,y_2)B_{\ell}|\mathd x_1\mathd y_2\leq\strokedint_{P\times Q_k}|AF(x_1,y_2)B|\mathd x_1\mathd y_2\\
&+\strokedint_{P\times Q_k}|AF(x_1,y_2)(B_{\ell}-B)|\mathd x_1\mathd y_2+\strokedint_{P\times Q_k}|(A_{\ell}-A)F(x_1,y_2)B|\mathd x_1\mathd y_2\\
&+\strokedint_{P\times Q_k}|(A_{\ell}-A)F(x_1,y_2)(B_{\ell}-B)|\mathd x_1\mathd y_2\\
&\leq\strokedint_{P\times Q_k}|AF(x_1,y_2)B|\mathd x_1\mathd y_2+|A|\cdot|B_{\ell}-B|\cdot\strokedint_{P\times Q_k}|F(x_1,y_2)|\mathd x_1\mathd y_2\\
&+|A_{\ell}-A|\cdot|B|\cdot\strokedint_{P\times Q_k}|F(x_1,y_2)|\mathd x_1\mathd y_2+|A_{\ell}-A|\cdot|B_{\ell}-B|\cdot\strokedint_{P\times Q_k}|F(x_1,y_2)|\mathd x_1\mathd y_2.
\end{align*}
Taking for every fixed $\ell=1,2,\ldots$ liminf as $k\rightarrow\infty$ and using the assumption we obtain
\begin{align*}
&\strokedint_{P}|A_{\ell}F(x_1,x_2)B_{\ell}|\mathd x_1\leq\liminf_{k\rightarrow\infty}\strokedint_{P\times Q_k}|AF(x_1,y_2)B|\mathd x_1\mathd y_2\\
&+(|A|\cdot|B_{\ell}-B|+|A_{\ell}-A|\cdot|B|+|A_{\ell}-A|\cdot|B_{\ell}-B|)\cdot\strokedint_{P}|F(x_1,x_2)|\mathd x_1.
\end{align*}
The desired result follows then after taking the limit as $\ell\rightarrow\infty$ in the last display.
\end{proof}

\begin{proof}[Proof (of Lemma \ref{l: step3})]
Broadly speaking, the proof we give is an adaptation of the first part of the proof of \cite[Proposition 4.2]{holmes-petermichl-wick}.

Without loss of generality we may assume that $\Vert B\Vert\ci{\wt{\text{bmo}}\ci{\bfD}(U,V,p)}<\infty$. We show only that
\begin{equation*}
\Vert B\Vert\ci{\wt{\text{bmo}}\ci{\bfD}(U,V,p)}\gtrsim_{p,d}\esssup_{x_2\in\R^m}\Vert B_{x_2}\Vert\ci{\wt{\text{BMO}}\ci{\cD^1}(U_{x_2},V_{x_2},p)},
\end{equation*}
the other estimate being symmetric.

We will rely on the ideas from the proof of \cite[Lemma 3.6]{matrix journe}. First of all, note that there exists a measurable subset $A$ of $\R^m$ such that $|\R^m\setminus A|=0$ and
\begin{align*}
U(\fdot,x_2),V(\fdot,x_2),B(\fdot,x_2)\in L^1\ti{loc}(\R^n;M_{d}(\C)),\qquad\forall x_2\in A.
\end{align*}
Then, for all $P\in\cD^1$, we can consider the a.e.~defined function $F\ci{P}$ on $\R^m$ given by
\begin{equation*}
F\ci{P}(x_2):=\strokedint_{P}|B(x_1,x_2)|\mathd x_1,\qquad\forall x_2\in A.
\end{equation*}
Clearly, $F\ci{P}\in L^1\ti{loc}(\R^m;M_{d}(\C))$, therefore we can consider the set $A\ci{P}$ of all Lebesgue points of $F\ci{P}$, for all $P\in\cD^1$. Set
\begin{equation*}
C:=A\cap\left(\bigcap_{P\in\cD^1}A\ci{P}\right).
\end{equation*}
Next, for all $S,T\in M_{d}(\Q+i\Q)$ and for all $P\in\cD^1$ consider the a.e.~on $\R^m$ defined locally integrable function
\begin{equation*}
H\ci{P,S,T}(y_2):=\strokedint_{P}|S(B(x_1,y_2)-\La B(\fdot,y_2)\Ra\ci{P})T|\mathd x_1,\qquad y_2\in C,
\end{equation*}
and let $C\ci{P,S,T}$ be the set of its Lebesgue points. Set
\begin{equation*}
C':=C\cap\bigg(\bigcap_{\substack{P\in\cD^1\\S,T\in M_{d}(\Q+i\Q)}}C\ci{P,S,T}\bigg),
\end{equation*}
so that $C'$ is a measurable subset of $\R^m$ with $|\R^m\setminus C'|=0$. Finally, as in the proof of \cite[Lemma 3.6]{matrix journe}, we have that there exists a measurable subset $E$ of $\R^m$ with $E\subseteq C'$ and $|\R^m\setminus E|=0$, such that the following holds: for each $x_2\in E$ and for each $P\in\cD^1$, there exist \emph{some} sequence $(Q_k)^{\infty}_{k=1}$ in $\cD^2$ shrinking to $x_2$ and \emph{some} self-adjoint matrices $\cU\ci{P,2},\cV\ci{P,2}$, such that
\begin{equation*}
\lim_{k\rightarrow\infty}\cU\ci{P\times Q_k}=\cU\ci{P,2},\qquad\lim_{k\rightarrow\infty}\cV\ci{P\times Q_k}=\cV\ci{P,2},
\end{equation*}
and
\begin{equation*}
|\cU\ci{P,2}e|\sim_{p,d}|\cU\ci{x_2,P}e|,\qquad|\cV\ci{P,2}e|\sim_{p,d}|\cV\ci{x_2,P}e|
\end{equation*}
for all $e\in\C^d$; in particular, note that the matrices $\cU\ci{P,2},\cV\ci{P,2}$ are invertible, because the matrices $\cU\ci{x_2,P},\cV\ci{x_2,P}$ are invertible, and moreover, by \cite[Lemma 3.1]{matrix journe} we deduce
\begin{equation*}
|\cU\ci{P,2}^{-1}e|\sim_{p,d}|\cU\ci{x_2,P}^{-1}e|,\qquad|\cV\ci{P,2}^{-1}e|\sim_{p,d}|\cV\ci{x_2,P}^{-1}e|,\qquad\forall e\in\C^d.
\end{equation*}

Pick now $x_2\in e$. We will prove that
\begin{equation*}
\Vert B_{x_2}\Vert\ci{\wt{\text{BMO}}\ci{\cD^1}(U_{x_2},V_{x_2},p)}\lesssim_{p,d}\Vert B\Vert\ci{\wt{\text{bmo}}\ci{\bfD}(U,V,p)}.
\end{equation*}
Let $P\in\cD^1$ be arbitrary. Pick a sequence $(Q_{n})_{n=1}^{\infty}$ in $\cD^2$ shrinking to $x_2$ as above. Consider the a.e.~defined function $f\ci{P}$ on $\R^m$ given by
\begin{equation*}
f\ci{P}(y_2):=\strokedint_{P}|\cV\ci{x_2,P}(B(x_1,y_2)-\La B(\fdot,y_2)\Ra\ci{P})\cU\ci{x_2,P}^{-1}|\mathd x_1,\qquad\forall y_2\in e.
\end{equation*}
It is obvious that $f\ci{P}$ is locally integrable. Our goal now is to show that $f\ci{P}(x_2)\lesssim_{p,d}\Vert B\Vert\ci{\wt{\text{bmo}}\ci{\bfD}(U,V,p)}$. Note that $x_2\in C'$, so by the construction of the set $C'$ coupled with Lemma \ref{l: from rational to weak Lebesgue differentiation} we deduce
\begin{equation*}
f\ci{P}(x_2)\leq\liminf_{k\rightarrow\infty}\La f\ci{P}\Ra\ci{Q_k}.
\end{equation*}
Next, for all $k=1,2,\ldots$ we have
\begin{align*}
\La f\ci{P}\Ra\ci{Q_k}&\leq
\strokedint_{P\times Q_k}|\cV\ci{x_2,P}(B(x_1,y_2)-\La B\Ra\ci{P\times Q_k})\cU\ci{x_2,P}^{-1}|\mathd x_1\mathd y_2\\
&+
\strokedint_{P\times Q_k}|\cV\ci{x_2,P}(\La B_{y_2}\Ra\ci{P}-\La B\Ra\ci{P\times Q_k})\cU\ci{x_2,P}^{-1}|\mathd x_1\mathd y_2.
\end{align*}
For all $k=1,2,\ldots$, we have
\begin{align*}
&\strokedint_{P\times Q_k}|\cV\ci{x_2,P}(B(x_1,y_2)-\La B\Ra\ci{P\times Q_k})\cU\ci{x_2,P}^{-1}|\mathd x_1\mathd y_2\\
&\sim_{p,d}\strokedint_{P\times Q_k}|\cV\ci{P,2}(B(x_1,y_2)-\La B\Ra\ci{P\times Q_k})\cU\ci{P,2}^{-1}|\mathd x_1\mathd y_2\\
&\leq\strokedint_{P\times Q_k}|\cV\ci{P\times Q_k}(B(x_1,y_2)-\La B\Ra\ci{P\times Q_k})\cU\ci{P\times Q_k}^{-1}|\mathd x_1\mathd y_2\\
&+|\cV\ci{P\times Q_k}|\cdot|\cU^{-1}\ci{P\times Q_k}-\cU^{-1}\ci{P,2}|\strokedint_{P\times Q_k}|B(x_1,y_2)-\La B\Ra\ci{P\times Q_k}|\mathd x_1\mathd y_2\\
&+|\cV\ci{P\times Q_k}-\cV\ci{P,2}|\cdot|\cU^{-1}\ci{P,2}|\strokedint_{P\times Q_k}|B(x_1,y_2)-\La B\Ra\ci{P\times Q_k}|\mathd x_1\mathd y_2\\
&\leq\Vert B\Vert\ci{\wt{\text{bmo}}\ci{\bfD}(U,V,p)}+2\La F\ci{P}\Ra\ci{Q_k}\big(|\cV\ci{P\times Q_k}|\cdot|\cU^{-1}\ci{P,2}-\cU^{-1}\ci{P\times Q_k}|+|\cV\ci{P\times Q_
k}-\cV\ci{P,2}|\cdot|\cU^{-1}\ci{P,2}|).
\end{align*}
We have $\lim_{k\rightarrow\infty}\cU\ci{P\times Q_k}=\cU\ci{P,2}$, so $\lim_{k\rightarrow\infty}\cU\ci{P\times Q_k}^{-1}=\cU\ci{P,2}^{-1}$. Moreover, we have
\begin{equation*}
\lim_{k\rightarrow\infty}\cV\ci{P\times Q_k}=\cV\ci{P,2},\qquad
\lim_{k\rightarrow\infty}\La F\ci{P}\Ra\ci{Q_k}=F\ci{P}(x_2)<\infty.
\end{equation*}
Therefore
\begin{equation*}
\limsup_{k\rightarrow\infty}\strokedint_{P\times Q_k}|\cV\ci{x_2,P}(B(x_1,y_2)-\La B\Ra\ci{P\times Q_k})\cU\ci{x_2,P}^{-1}|\mathd x_1\mathd y_2\lesssim_{p,d}\Vert B\Vert\ci{\wt{\text{bmo}}\ci{\bfD}(U,V,p)}.
\end{equation*}
Moreover, we have
\begin{align*}
&\strokedint_{P\times Q_k}|\cV\ci{x_2,P}(\La B_{y_2}\Ra\ci{P}-\La B\Ra\ci{P\times Q_k})\cU\ci{x_2,P}^{-1}|\mathd x_1\mathd y_2\\
&=\strokedint_{Q_k}|\cV\ci{x_2,P}(\La B_{y_2}\Ra\ci{P}-\La B\Ra\ci{P\times Q_k})\cU\ci{x_2,P}^{-1}|\mathd y_2\\
&\leq\strokedint_{P\times Q_k}|\cV\ci{x_2,P}(B(x_1,y_2)-\La B\Ra\ci{P\times Q_k})\cU^{-1}\ci{x_2,P}|\mathd x_1\mathd y_2,
\end{align*}
for all $k=1,2,\ldots$. We saw above that 
\begin{equation*}
\limsup_{k\rightarrow\infty}\strokedint_{P\times Q_k}|\cV\ci{x_2,P}(B(x_1,y_2)-\La B\Ra\ci{P\times Q_k})\cU\ci{x_2,P}^{-1}|\mathd x_1\mathd y_2\lesssim_{p,d}\Vert B\Vert\ci{\wt{\text{bmo}}\ci{\bfD}(U,V,p)},
\end{equation*}
concluding the proof.
\end{proof}

We can now prove Proposition \ref{prop:equivalencesdyadiclittleBMO}.

\begin{proof}[Proof (of Proposition \ref{prop:equivalencesdyadiclittleBMO})]
Estimate \eqref{mainBMOdyadicequivalence} follows immediately by combining Lemmas \ref{l: step1}, \ref{l: step2}, \ref{l: step3} and \ref{l: two-weight-biparameter A_p implies uniform A_p in each coordinate} with the results in the one-parameter case \cite[Lemma 4.7]{isralowitz-pott-treil}. Estimate \eqref{secondaryBMOdyadicequivalence} can then be proved using estimate \eqref{mainBMOdyadicequivalence} in the exact same way as this is done in the one-parameter case in \cite[Lemma 4.7]{isralowitz-pott-treil}; we omit the details.
\end{proof}

\subsection{Continuous versions} Let $1<p<\infty$. Let $U,V$ be biparameter $d\times d$-matrix valued $A_p$ weights on $\R^n\times\R^m$. Let $B\in L^1\ti{loc}(\R^{n+m};M_{d}(\C))$. We define the two-matrix weighted little BMO norm $\Vert B\Vert\ci{\text{bmo}(U,V,p)}$ by
\begin{equation}
\label{matrix two-weight little BMO}
\Vert B\Vert\ci{\text{bmo}(U,V,p)}:=\sup_{R}\left(\strokedint_{R}|V(x)^{1/p}(B(x)-\La B\Ra\ci{R})\cU\ci{R}^{-1}|^{p}\mathd x\right)^{1/p},
\end{equation}
where the supremum is taken over all rectangles $R$ in $\R^n\times\R^m$ (with sides parallel to the coordinate axes), and all reducing operators are taken with respect to exponent $p$. We also define
\begin{equation}
\label{matrix ``one-weight'' little BMO}
\Vert B\Vert\ci{\wt{\text{bmo}}(U,V,p)}:=\sup_{R}\strokedint_{R}|\cV\ci{R}(B(x)-\La B\Ra\ci{R})\cU\ci{R}^{-1}|\mathd x,
\end{equation}
where the supremum is taken over all rectangles $R$ in $\R^n\times\R^m$ (with sides parallel to the coordinate axes), and all reducing operators are again taken with respect to exponent $p$. As before we also need the variants
\begin{equation}
\label{matrix two-weight little BMO first pointwise}
\Vert B\Vert\ci{\text{bmo}^1(U,V,p)}:=\sup_{R}\left(\strokedint_{R}\left(\strokedint_{R}|V(x)^{1/p}(B(x)-B(y))U(y)^{-1/p}|^{p'}\mathd y\right)^{p/p'}\mathd x\right)^{1/p}
\end{equation}
and
\begin{equation}
\label{matrix two-weight little BMO second pointwise}
\Vert B\Vert\ci{\text{bmo}^2(U,V,p)}:=\sup_{R}\left(\strokedint_{R}\left(\strokedint_{R}|V(x)^{1/p}(B(x)-B(y))U(y)^{-1/p}|^{p}\mathd x\right)^{p'/p}\mathd y\right)^{1/p'},
\end{equation}
where both suprema are taken over all rectangles $R$ in $\R^n\times\R^m$.

By the well-known $1/3$ trick (see e.g. \cite{lerner-nazarov}) we have that there exist $3^n$ dyadic grids $\cD_1^{(n)},\ldots,\cD_{3^{n}}^{(n)}$ in $\R^{n}$, such that for all cubes $Q$ in $\R^n$, there exist $i\in\lbrace1,\ldots,3^{n}\rbrace$ and $Q'\in\cD^{(n)}_{i}$ with $Q\subseteq Q'$ and $\ell(Q')\leq 6\ell(Q)$. Thus, we can consider the product dyadic grids
\begin{equation*}
\bfD^{i,j}:=\cD_{i}^{(n)}\times\cD_{j}^{(m)},\qquad i=1,\ldots,3^n,~j=1,\ldots,3^m
\end{equation*}
in $\R^n\times\R^m$. To be able to use covering arguments in the matrix-weighted setting, we need the following trivial observation, which follows immediately from the definition of reducing operators.

\begin{lm}
\label{l: trivial inclusion reducing operators}
Let $E,F$ be measurable subsets of $\R^{n}$ with $E\subseteq F$ and $0<|E|,|F|<\infty$. Let $W\in L^1(F;M_{d}(\C))$. Then, there holds
\begin{equation*}
|\cW\ci{E}e|\lesssim_{p,d}\left(\frac{|F|}{|E|}\right)^{1/p}|\cW\ci{F}e|,\qquad\forall e\in\C^d.
\end{equation*}
\end{lm}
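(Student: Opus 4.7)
The claim is an immediate consequence of the two-sided inequality \eqref{reducing_matrices_definition} defining reducing operators, combined with the trivial monotonicity of integrals of nonnegative functions under the inclusion $E\subseteq F$. The plan is the following.

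First, I would fix $e\in\C^d$ arbitrary and apply the upper bound in the defining inequality \eqref{reducing_matrices_definition} for the reducing operator $\cW\ci{E}$ of $W$ over $E$ with respect to exponent $p$ to obtain
\begin{equation*}
|\cW\ci{E}e|^{p}\leq d^{p/2}\strokedint_{E}|W(x)^{1/p}e|^{p}\mathd x=\frac{d^{p/2}}{|E|}\int_{E}|W(x)^{1/p}e|^{p}\mathd x.
\end{equation*}

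Next, since $E\subseteq F$ and the integrand $|W(x)^{1/p}e|^{p}$ is nonnegative, I would extend the integration from $E$ to $F$ and rewrite the resulting expression as an average over $F$:
\begin{equation*}
\frac{1}{|E|}\int_{E}|W(x)^{1/p}e|^{p}\mathd x\leq\frac{1}{|E|}\int_{F}|W(x)^{1/p}e|^{p}\mathd x=\frac{|F|}{|E|}\strokedint_{F}|W(x)^{1/p}e|^{p}\mathd x.
\end{equation*}

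Finally, I would apply the lower bound in the defining inequality \eqref{reducing_matrices_definition} for the reducing operator $\cW\ci{F}$ of $W$ over $F$ with respect to exponent $p$, which yields $\strokedint_{F}|W(x)^{1/p}e|^{p}\mathd x\leq|\cW\ci{F}e|^{p}$. Chaining the three inequalities gives
\begin{equation*}
|\cW\ci{E}e|^{p}\leq d^{p/2}\,\frac{|F|}{|E|}\,|\cW\ci{F}e|^{p},
\end{equation*}
and taking $p$-th roots produces the desired estimate with implied constant $d^{1/2}$. There is no real obstacle here: the lemma is a direct consequence of the defining inequalities of reducing operators and of the positivity-based comparison of integrals, and does not require any further properties of $W$ beyond its integrability and positive-definiteness a.e.\ on $F$.
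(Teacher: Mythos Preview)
Your proof is correct and is exactly what the paper has in mind: the authors state that the lemma ``follows immediately from the definition of reducing operators'' without giving further details, and your chain of inequalities using the two sides of \eqref{reducing_matrices_definition} together with the trivial monotonicity of the integral under $E\subseteq F$ is precisely that immediate verification.
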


We can now extend Proposition \ref{prop:equivalencesdyadiclittleBMO} to the continuous setting.

\begin{cor}
\label{c: equivalences of little bmo}
There holds
\begin{equation*}
\Vert B\Vert\ci{\emph{bmo}(U,V,p)}\sim\max(\esssup_{x_2\in\R^m}\Vert B_{x_2}\Vert\ci{\emph{BMO}(U_{x_2},V_{x_2},p)},\esssup_{x_1\in\R^n}\Vert B_{x_1}\Vert\ci{\emph{BMO}(U_{x_1},V_{x_1},p)}),
\end{equation*}
\begin{equation*}
\Vert B\Vert\ci{\emph{bmo}(U,V,p)}\sim\Vert B^{\ast}\Vert\ci{\emph{bmo}(V',U',p')}\sim\Vert B\Vert\ci{\wt{\emph{bmo}}(U,V,p)}\sim\Vert B^{\ast}\Vert\ci{\wt{\emph{bmo}}(V',U',p')},
\end{equation*}
and
\begin{equation*}
\Vert B\Vert\ci{\emph{bmo}(U,V,p)}\sim\Vert B\Vert\ci{\emph{bmo}^{1}(U,V,p)}\sim\Vert B\Vert\ci{\emph{bmo}^{2}(U,V,p)},
\end{equation*}
where all implied constants depend only on $n,m,p,d,[U]\ci{A_p(\R^n\times\R^m)}$ and $[V]\ci{A_p(\R^n\times\R^m)}$.
\end{cor}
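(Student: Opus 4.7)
The plan is to reduce everything to the dyadic content of Proposition~\ref{prop:equivalencesdyadiclittleBMO} via the $1/3$-trick for the product dyadic grids $\bfD^{i,j}$, $1\leq i\leq 3^n$, $1\leq j\leq 3^m$, already introduced above.

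First, I would establish that for each of the four norms in \eqref{matrix two-weight little BMO}--\eqref{matrix two-weight little BMO second pointwise} (and also for the ``star'' versions), the continuous norm is equivalent to the maximum of the corresponding dyadic norm over the $3^{n+m}$ grids $\bfD^{i,j}$, with implied constants depending only on $n,m,p,d,[U]\ci{A_p(\R^n\times\R^m)},[V]\ci{A_p(\R^n\times\R^m)}$. The inequality $\gtrsim$ is trivial since every dyadic rectangle is a rectangle. For $\lesssim$, given any rectangle $R$ in $\R^n\times\R^m$, the $1/3$-trick produces some $R'\in\bfD^{i,j}$ with $R\subseteq R'$ and $|R'|\leq 6^{n+m}|R|$. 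I would perform the standard BMO-style replacement $\La B\Ra\ci{R}\rightarrow\La B\Ra\ci{R'}$ at the cost of a universal factor, then pass from $\strokedint_R$ to $(|R'|/|R|)\strokedint_{R'}$, and finally factor the reducing operators by writing $\cV\ci{R}(B-\La B\Ra\ci{R'})\cU^{-1}\ci{R}=(\cV\ci{R}\cV^{-1}\ci{R'})\cdot\cV\ci{R'}(B-\La B\Ra\ci{R'})\cU^{-1}\ci{R'}\cdot(\cU\ci{R'}\cU^{-1}\ci{R})$.

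The critical step is to control $|\cU^{-1}\ci{R}\cU\ci{R'}|$ and $|\cV\ci{R}\cV^{-1}\ci{R'}|$ by constants depending only on the listed parameters. For $U$ this is achieved by combining Lemma~\ref{l: replace inverse by prime}(1), which gives $|\cU^{-1}\ci{R}e|\sim|\cU'\ci{R}e|$ with constants depending on $[U]\ci{A_p(\R^n\times\R^m)}$, and Lemma~\ref{l: trivial inclusion reducing operators} applied to the weight $U'$ at exponent $p'$, which yields $|\cU'\ci{R}e|\lesssim(|R'|/|R|)^{1/p'}|\cU'\ci{R'}e|\sim|\cU^{-1}\ci{R'}e|$, so altogether $|\cU^{-1}\ci{R}e|\lesssim|\cU^{-1}\ci{R'}e|$ for every $e\in\C^d$, i.e.\ $|\cU^{-1}\ci{R}\cU\ci{R'}|\lesssim 1$. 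For $\cV\ci{R}\cV^{-1}\ci{R'}$, Lemma~\ref{l: trivial inclusion reducing operators} applied directly to $V$ gives $|\cV\ci{R}e|\lesssim(|R'|/|R|)^{1/p}|\cV\ci{R'}e|\lesssim|\cV\ci{R'}e|$, which is the desired bound. Entirely analogous estimates handle the $\text{bmo}^1,\text{bmo}^2$ norms and the versions with $(V',U',p')$.

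With this dyadic reduction at hand, the second and third equivalences of the Corollary follow immediately from Proposition~\ref{prop:equivalencesdyadiclittleBMO} applied to each of the finitely many grids $\bfD^{i,j}$, followed by taking the maximum. For the first equivalence (the slice characterization), I would combine the dyadic slice bounds of Lemma~\ref{l: step1} (for the $\lesssim$ inequality in each $\bfD^{i,j}$) and Lemma~\ref{l: step3} (for the $\gtrsim$ inequality), with the one-parameter version of the $1/3$-trick to pass from dyadic slice BMO to continuous slice BMO, using Lemma~\ref{l: two-weight-biparameter A_p implies uniform A_p in each coordinate}(2) to guarantee that for a.e.\ $x_2\in\R^m$ the slices $U(\fdot,x_2),V(\fdot,x_2)$ are one-parameter matrix $A_p$ weights with constants controlled by $[U]\ci{A_p(\R^n\times\R^m)},[V]\ci{A_p(\R^n\times\R^m)}$ (and symmetrically).

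The principal obstacle is the matrix-valued comparison of reducing operators between $R\subseteq R'$ with $|R'|\sim|R|$: scalar-style doubling is not available for matrix weights, so rather than directly comparing $\cU\ci{R}$ and $\cU\ci{R'}$ through averages, I route the argument through the Muckenhoupt dual $U'$ using the ``inverse vs.\ prime'' identity of Lemma~\ref{l: replace inverse by prime} together with the monotonicity under set inclusion of Lemma~\ref{l: trivial inclusion reducing operators}. Once that small but essential bookkeeping is in place, the remainder of the proof is a straightforward transcription of the dyadic arguments.
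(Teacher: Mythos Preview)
Your proof plan is correct and follows essentially the same route as the paper: reduce the continuous norms to the maximum over the $3^{n+m}$ product dyadic grids $\bfD^{i,j}$ via the $1/3$-trick, handle the passage between reducing operators on $R\subseteq R'$ using Lemma~\ref{l: replace inverse by prime}(1) together with Lemma~\ref{l: trivial inclusion reducing operators}, and then invoke Proposition~\ref{prop:equivalencesdyadiclittleBMO} (and Lemmas~\ref{l: step1}, \ref{l: step3}, \ref{l: two-weight-biparameter A_p implies uniform A_p in each coordinate} for the slice characterization). The paper carries out exactly this computation for the $\text{bmo}$ norm, controlling the cross term $\La B\Ra\ci{R}-\La B\Ra\ci{R'}$ via the dyadic $\wt{\text{bmo}}$ norm and the equivalence of Proposition~\ref{prop:equivalencesdyadiclittleBMO}, and then declares that the remaining variants are handled analogously; your plan makes this uniformity slightly more explicit but is otherwise the same.
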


\begin{proof}
Let $R$ be any rectangle in $\R^n\times\R^m$. There exist $i\in\lbrace1,\ldots,3^n\rbrace$, $j\in\lbrace1,\ldots,3^m\rbrace$ and $S\in\bfD^{i,j}$, such that $R\subseteq S$ and $|S|\leq 6^{n+m}|R|$. Then, we have
\begin{align*}
&\left(\strokedint_{R}|V(x)^{1/p}(B(x)-\La B\Ra\ci{R})\cU\ci{R}^{-1}|^{p}\mathd x\right)^{1/p}\\
&\lesssim_{n,m,p,d}\left(\strokedint_{S}|V(x)^{1/p}(B(x)-\La B\Ra\ci{R})\cU'\ci{R}|^{p}\mathd x\right)^{1/p}\\
&\lesssim_{n,m,p,d}\left(\strokedint_{S}|V(x)^{1/p}(B(x)-\La B\Ra\ci{R})\cU'\ci{S}|^{p}\mathd x\right)^{1/p}\\
&\lesssim_{p,d}[U]\ci{A_p,\bfD^{i,j}}^{1/p}\left(\strokedint_{S}|V(x)^{1/p}(B(x)-\La B\Ra\ci{R})\cU^{-1}\ci{S}|^{p}\mathd x\right)^{1/p}\\
&\leq[U]\ci{A_p(\R^n\times\R^m)}^{1/p}\left(\strokedint_{S}|V(x)^{1/p}(B(x)-\La B\Ra\ci{S})\cU\ci{S}^{-1}|^{p}\mathd x\right)^{1/p}\\
&+[U]\ci{A_p(\R^n\times\R^m)}^{1/p}\left(\strokedint_{S}|V(x)^{1/p}(\La B\Ra\ci{S}-\La B\Ra\ci{R})\cU\ci{S}^{-1}|^{p}\mathd x\right)^{1/p},
\end{align*}
where in the first $\lesssim_{n,m,p,d}$ and in the first $\lesssim_{p,d}$ we used Lemma \ref{l: replace inverse by prime}, and in the second $\lesssim_{n,m,p,d}$ we used Lemma \ref{l: trivial inclusion reducing operators}. By definition we have
\begin{equation*}
\left(\strokedint_{S}|V(x)^{1/p}(B(x)-\La B\Ra\ci{S})\cU\ci{S}^{-1}|^{p}\mathd x\right)^{1/p}\leq\Vert B\Vert\ci{\text{bmo}\ci{\bfD^{i,j}}(U,V,p)}.
\end{equation*}
Moreover, we have
\begin{align*}
&\left(\strokedint_{S}|V(x)^{1/p}(\La B\Ra\ci{S}-\La B\Ra\ci{R})\cU\ci{S}^{-1}|^{p}\mathd x\right)^{1/p}\sim_{p,d}|\cV\ci{S}(\La B\Ra\ci{S}-\La B\Ra\ci{R})\cU\ci{S}^{-1}|\\
&\leq\strokedint_{R}|\cV\ci{S}(B(x)-\La B\Ra\ci{S})\cU\ci{S}^{-1}|\mathd x
\lesssim_{n,m}\strokedint_{S}|\cV\ci{S}(B(x)-\La B\Ra\ci{S})\cU\ci{S}^{-1}|\mathd x\\
&\leq\Vert B\Vert\ci{\wt{\text{bmo}}\ci{\bfD^{i,j}}(U,V,p)}
\sim\Vert B\Vert\ci{\text{bmo}\ci{\bfD^{i,j}}(U,V,p)},
\end{align*}
where in the last $\sim$ all implied constants depend only on $n,m,p, d, [U]\ci{A_p(\R^n\times\R^m)}$ and $[V]\ci{A_p(\R^n\times\R^m)}$.

Therefore
\begin{equation}
\label{little BMO from dyadic to continuous}
\Vert B\Vert\ci{\text{bmo}(U,V,p)}\sim\max\left\lbrace \Vert B\Vert\ci{\text{bmo}\ci{\bfD^{i,j}}(U,V,p)}:~i=1,\ldots,3^n,~j=1,\ldots,3^m\right\rbrace,
\end{equation}
where all implied constants depend only on $n,m,p, d, [U]\ci{A_p(\R^n\times\R^m)}$ and $[V]\ci{A_p(\R^n\times\R^m)}$. Similar arguments yield analogs of \eqref{little BMO from dyadic to continuous} for the other variants of matrix-weighted little BMO (as well as for one-parameter matrix-weighted BMO spaces), thus the desired estimates follow from Proposition \ref{prop:equivalencesdyadiclittleBMO}.
\end{proof}

\section{Matrix-weighted little BMO lower bounds}
\label{s: lower matrix little BMO bounds}
Here we show how to adapt the argument from \cite[Section~3]{isralowitz-pott-treil} to get a lower bound, in terms of the little BMO matrix norm, for commutators with all possible Riesz transforms.
To be more precise, let $R^1_j$ with $1 \leq j \leq n$ and $R^2_k$ with $1 \leq k \leq m$ denote, respectively, the Riesz transforms acting on the first and second variables for scalar functions defined on $\R^n\times\R^m.$
As before, we use the same notation for the canonical extensions of $R^1_j$ and $R^2_k$ acting componentwise on $\C^d$-valued functions defined on $\R^n\times\R^m$ (without risk of ambiguity as we will only use this canonical extension in the rest of this section).
Recall that, taking $x \in \R^n$ and $y \in \R^m,$ these operators have kernels
\begin{equation*}
  \frac{x_j-x_j'}{|x-x'|^{n+1}} \otimes I_d \quad \text{and} \quad \frac{y_k-y_k'}{|y-y'|^{m+1}} \otimes I_d.
\end{equation*}
Our goal is to show the following
\begin{thm}
  \label{thm:LowerBoundRiesz}
  Consider a function $B \in L^1\ti{loc}(\R^{n+m};M_{d}(\C)),$ let $1 < p < \infty$ and let $U$ and $V$ be $d \times d$-matrix valued biparameter $A_p$ weights.
  Then, there holds
  \begin{equation}
  \label{eq:RieszLowerBound}
  \Vert B\Vert\ci{\emph{bmo}(U,V,p)} \lesssim
  \max_{\substack{1\leq j\leq n\\ 1\leq k \leq m}} \Vert[R^1_jR^2_k,B]\Vert\ci{L^{p}(U)\rightarrow L^{p}(V)},
  \end{equation}
  where the implied constant depends only on the $n,m,p,d,[U]\ci{A_p,\bfD}$ and $[V]\ci{A_p,\bfD}$.
\end{thm}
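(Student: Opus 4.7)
The approach is a biparameter matrix-weighted adaptation of the classical Coifman--Rochberg--Weiss commutator lower bound, carried out in the one-parameter matrix-weighted setting in \cite[Section~3]{isralowitz-pott-treil}. By Corollary~\ref{c: equivalences of little bmo} one has $\|B\|\ci{\text{bmo}(U,V,p)} \sim \|B\|\ci{\wt{\text{bmo}}(U,V,p)}$, so it suffices to bound the latter norm by $\max\ci{j,k} \|[R^1_j R^2_k, B]\|\ci{L^p(U)\to L^p(V)}$. Fix a rectangle $R = R_1 \times R_2$ and indices $1\leq j\leq n,\, 1\leq k\leq m$. Translating $R_1$ by $2\ell(R_1)$ in the direction $e_j$ and $R_2$ by $2\ell(R_2)$ in the direction $e_k$ produces a rectangle $\tilde R = \tilde R_1 \times \tilde R_2$ on which the (scalar) kernel
\begin{equation*}
K(x,y) = c_{n,m}\frac{x^{(1)}_j - y^{(1)}_j}{|x^{(1)} - y^{(1)}|^{n+1}} \cdot \frac{x^{(2)}_k - y^{(2)}_k}{|x^{(2)} - y^{(2)}|^{m+1}}
\end{equation*}
of $R^1_j R^2_k$ has constant sign $\varepsilon \in \{\pm 1\}$ and satisfies $|K(x,y)| \sim\ci{n,m} 1/|R|$ for all $(x,y)\in R\times\tilde R$. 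Moreover, $R\cup\tilde R$ sits inside an enlargement $R^*$ with sides parallel to those of $R$ and $|R^*| \sim\ci{n,m} |R|$.

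For unit vectors $e,e' \in \C^d$, I would test against $f := \mathbf{1}\ci{\tilde R}\, \cU\ci{\tilde R}^{-1} e$ and pair against $g := \mathbf{1}\ci{R}\, (\cV\ci{R}')^{-1} e'$. By the defining property \eqref{reducing_matrices_definition} of reducing operators one has $\|f\|\ci{L^p(U)} \sim\ci{p,d} |\tilde R|^{1/p}$ and $\|g\|\ci{L^{p'}(V')} \sim\ci{p,d} |R|^{1/p'}$. Expanding the pairing
\begin{equation*}
([R^1_j R^2_k, B] f, g) = \int_R \int_{\tilde R} K(x,y)\, \La (B(y)-B(x))\cU\ci{\tilde R}^{-1} e,\, (\cV\ci{R}')^{-1} e'\Ra\, dy\, dx
\end{equation*}
and splitting $K(x,y) = \varepsilon/|R| + (K(x,y) - \varepsilon/|R|)$, the main term isolates the oscillation $\varepsilon\,\La\cV\ci{R}'(\La B\Ra\ci{\tilde R} - B(x))\cU\ci{\tilde R}^{-1} e, e'\Ra$ integrated over $x\in R$. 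Combining this with the commutator bound, using Lemma~\ref{l: replace inverse by prime} to swap $\cV\ci{R}'$ for $\cV\ci{R}$, and optimizing over $e,e'$ via $|A|\sim\ci{d}\sum_i |Ae_i|$, one extracts
\begin{equation*}
\strokedint_R |\cV\ci{R}(\La B\Ra\ci{\tilde R} - B(x))\cU\ci{\tilde R}^{-1}|\, dx \lesssim \max\ci{j,k}\|[R^1_j R^2_k, B]\|\ci{L^p(U)\to L^p(V)} + (\text{error}).
\end{equation*}

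The final step is to transfer the estimate from $\tilde R$ back to $R$. Since $R,\tilde R \subseteq R^*$ with $|R^*| \sim|R|$, Lemma~\ref{l: trivial inclusion reducing operators} together with the biparameter $A_p$ assumption on $U, V$ yields the comparability $|\cU\ci{\tilde R} v| \sim |\cU\ci{R} v|$ and $|\cV\ci{\tilde R} v| \sim |\cV\ci{R} v|$ for every $v \in \C^d$. The average $\La B\Ra\ci{\tilde R}$ is then replaced by $\La B\Ra\ci{R}$ modulo an error of the form $|\cV\ci{R}(\La B\Ra\ci{\tilde R} - \La B\Ra\ci{R})\cU\ci{R}^{-1}|$ which is itself controlled on $R^*$ by $\|B\|\ci{\wt{\text{bmo}}(U,V,p)}$, and a standard absorption after taking the supremum over rectangles closes the estimate.

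\textbf{Main obstacle.} The principal difficulty is the remainder arising from the decomposition $K(x,y) = \varepsilon/|R| + (K(x,y) - \varepsilon/|R|)$ on $R\times\tilde R$: the remainder is comparable in size to the main term (rather than small), so it cannot be discarded outright. Following the strategy of \cite[Section~3]{isralowitz-pott-treil}, one recasts the key computation using the equivalent norms $\|B\|\ci{\text{bmo}^1(U,V,p)}$ or $\|B\|\ci{\text{bmo}^2(U,V,p)}$ defined in \eqref{matrix two-weight little BMO first pointwise} and \eqref{matrix two-weight little BMO second pointwise} (which, by Proposition~\ref{prop:equivalencesdyadiclittleBMO} combined with the transfer from dyadic to continuous rectangles in Corollary~\ref{c: equivalences of little bmo}, are comparable to $\|B\|\ci{\text{bmo}(U,V,p)}$): the remainder contribution then fits naturally inside a two-matrix weighted double integral that can be estimated by the commutator norm via duality. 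The secondary difficulty, namely the passage from biparameter reducing operators on the translate $\tilde R$ back to those on $R$, rests on Lemma~\ref{l: two-weight-biparameter A_p implies uniform A_p in each coordinate} together with the enlargement argument described above.
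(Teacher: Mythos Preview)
Your proposal has a genuine gap. The kernel splitting $K(x,y)=\varepsilon/|R|+(K(x,y)-\varepsilon/|R|)$ on $R\times\tilde R$ does not close: since $|K(x,y)-\varepsilon/|R||\lesssim 1/|R|$ there, the remainder produces a double-oscillation integral over the \emph{disjoint} pair $R\times\tilde R$, which is neither a commutator pairing nor an instance of the $\text{bmo}^1$/$\text{bmo}^2$ expressions \eqref{matrix two-weight little BMO first pointwise}--\eqref{matrix two-weight little BMO second pointwise} (those integrate over $R\times R$). Enlarging to $R^*\supseteq R\cup\tilde R$ only bounds it by the oscillation on $R^*$, and the same is true of your transfer error $|\cV\ci{R}(\langle B\rangle\ci{\tilde R}-\langle B\rangle\ci{R})\cU\ci{R}^{-1}|$. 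After taking the supremum over rectangles you obtain $\|B\|\ci{\wt{\text{bmo}}}\lesssim(\text{commutator})+C\,\|B\|\ci{\wt{\text{bmo}}}$ with no mechanism forcing $C<1$, so the ``standard absorption'' is circular. You also misidentify the strategy of \cite[Section~3]{isralowitz-pott-treil}: that paper does not run a translated-cube argument.

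The paper (following \cite{isralowitz-pott-treil}) avoids absorption entirely via the \emph{tensorization trick}. One builds the $2d\times 2d$ weight $W^{1/p}=(\Phi^*\Phi)^{1/2}$ from
\[
\Phi=\begin{pmatrix}V^{1/p}&V^{1/p}B\\0&U^{1/p}\end{pmatrix},
\]
checks that $[W]\ci{A_p,R}\sim[U]\ci{A_p,R}+[V]\ci{A_p,R}+\|B\|\ci{\text{bmo}^1(U,V,p),R}^p$, and proves the \emph{one-weight} lower bound $|\cW\ci{R}'\cW\ci{R}|\lesssim\max_{j,k}\|\1\ci{R}R^1_jR^2_k\1\ci{R}\|\ci{L^p(W)\to L^p(W)}$ by the Liaw--Treil Wiener-algebra device (Lemmas~\ref{lm:WienerAlgebraLemma}--\ref{lm:ReducingOperatorLowerBound}): the averaging kernel $\1\ci{R\times R}/|R|$ is written as a finite sum of biparameter Riesz kernels times $W_0(\R^{n+m})$ multipliers. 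Since $\Phi(R^1_jR^2_k\otimes I_2)\Phi^{-1}$ has $V^{1/p}[B,R^1_jR^2_k]U^{-1/p}$ as its off-diagonal block, the right-hand side is controlled by the commutator norm plus $\|R^1_jR^2_k\|\ci{L^p(U)\to L^p(U)}+\|R^1_jR^2_k\|\ci{L^p(V)\to L^p(V)}$; rescaling $B\mapsto rB$, dividing by $r$, and letting $r\to\infty$ eliminates these last two terms, yielding the bound with no translated rectangles and no self-referential error.
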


It can look like the proof of this result takes a different approach to the classical one for such lower bounds, due to Coifman--Rochberg--Weiss \cite{coifman-rochberg-weiss} and based on a property of spherical harmonics.
However, the main idea of this approach is still at the core of the argument since we will also express the unit as a linear combination of products of Riesz transforms, together with the tensorization trick due to Isralowitz--Pott--Treil \cite{isralowitz-pott-treil}.
For a suitable scalar function $h$ defined on $\R^{n+m},$ let us denote by $\fourier{h}(t) \coloneq \int_{\R^{n+m}} h(x) e^{-i\langle t, x \rangle} \mathd x$ its Fourier transform.
Consider now the Wiener algebra $W_0(\R^{n+m}) \coloneq \{\psi = \fourier{\rho}\colon \rho \in L^1(\R^{n+m})\},$ that is the set of functions that are Fourier transforms of integrable functions.
First we need the following lemma, which follows from an observation in \cite[Lemma~2.1]{liaw-treil}.
For $x = (x_1,x_2), y = (y_1,y_2) \in \R^n\times\R^m,$ let us denote by $K_{jk}(x,y)$ the kernel of the product $R^1_jR^2.$
\begin{lm}
  \label{lm:WienerAlgebraLemma}
  Assume that $W$ is a $d \times d$-matrix valued weight and that $R^1_jR^2_k$ are bounded in $L^p(W)$ for $1 \leq j \leq n$ and $1 \leq k \leq m.$
  That is, for compactly supported $f \in L^2 \cap L^p(W)$ and $g \in L^2 \cap L^{p'}(W^{-p'/p})$ such that $\dist(\supp f, \supp g) > 0,$ if $E$ is a measurable set in $\R^{n+m},$ then
  \begin{multline}
  \label{eq:RieszLpBound}
  \left|\int_{\R^{n+m}}\int_{\R^{n+m}} \1\ci{E}(x,y)K_{jk}(x,y) \langle f(x), g(y) \rangle \mathd x\mathd y \right|\\
  \leq \Vert\1\ci{E}R^1_jR^2_k\1\ci{E}\Vert\ci{L^{p}(W)\rightarrow L^{p}(W)} \Vert f \Vert\ci{L^{p}(W)} \Vert g \Vert\ci{L^{p'}(W^{-p'/p})}.
  \end{multline}
  Then, if $\psi \in W_0(\R^{n+m}),$ $\varepsilon_1, \varepsilon_2 > 0$ and $E$ is a measurable set of $\R^{n+m},$ it holds that
  \begin{multline}
  \label{eq:WienerBilinearEstimate}
  \left|\int_{\R^{n+m}}\int_{\R^{n+m}} \psi\left( \frac{x_1-y_1}{\varepsilon_1}, \frac{x_2-y_2}{\varepsilon_2}\right)\1\ci{E}(x,y)K_{jk}(x,y) \langle f(x), g(y) \rangle \mathd x\mathd y \right|\\
  \lesssim \Vert\1\ci{E}R^1_jR^2_k\1\ci{E}\Vert\ci{L^{p}(W)\rightarrow L^{p}(W)} \Vert f \Vert\ci{L^{p}(W)} \Vert g \Vert\ci{L^{p'}(W^{-p'/p})}.
  \end{multline}
\end{lm}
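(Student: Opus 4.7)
The plan is to express $\psi$ as a superposition of plane waves via its Wiener-algebra representation and then reduce \eqref{eq:WienerBilinearEstimate} to the given bilinear inequality \eqref{eq:RieszLpBound} by exploiting that modulations by unimodular functions preserve matrix-weighted $L^p$ norms. Concretely, I would write $\psi = \widehat{\rho}$ for some $\rho \in L^1(\R^{n+m})$, and via the change of variables $t_i \mapsto \varepsilon_i s_i$ (componentwise in each factor) obtain
\begin{equation*}
\psi\!\left(\frac{x_1-y_1}{\varepsilon_1},\frac{x_2-y_2}{\varepsilon_2}\right) = \int_{\R^{n+m}}\rho_\varepsilon(s)\,e^{-i\langle s_1,x_1-y_1\rangle - i\langle s_2,x_2-y_2\rangle}\,\mathd s,
\end{equation*}
where $\rho_\varepsilon(s) := \varepsilon_1^{n}\varepsilon_2^{m}\rho(\varepsilon_1 s_1,\varepsilon_2 s_2)$ satisfies $\|\rho_\varepsilon\|\ci{L^1} = \|\rho\|\ci{L^1}$. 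Substituting this identity into the left-hand side of \eqref{eq:WienerBilinearEstimate} and interchanging the order of integration reduces matters to controlling, for each $s \in \R^{n+m}$, the bilinear form
\begin{equation*}
B_s(f,g) := \int_{\R^{n+m}}\!\!\int_{\R^{n+m}} e^{-i\langle s, x - y\rangle}\,\1\ci{E}(x,y)\,K_{jk}(x,y)\,\langle f(x),g(y)\rangle\,\mathd x\,\mathd y.
\end{equation*}

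The crucial observation is that the phase factors as a product of a function of $x$ alone and a function of $y$ alone. Setting $f_s$ and $g_s$ equal to suitable unimodular modulations of $f$ and $g$ (the signs chosen according to the conjugation convention of $\langle\fdot,\fdot\rangle$ so that $\langle f_s(x), g_s(y)\rangle = e^{-i\langle s, x-y\rangle}\langle f(x), g(y)\rangle$), one recognizes $B_s(f,g)$ as precisely the bilinear form in \eqref{eq:RieszLpBound} applied to $(f_s, g_s)$. Since $|e^{i\langle s,\fdot\rangle}| = 1$ pointwise, the modulations preserve both the $L^p(W)$ and the $L^{p'}(W^{-p'/p})$ norms, so hypothesis \eqref{eq:RieszLpBound} applies uniformly in $s$ to yield
\begin{equation*}
|B_s(f,g)| \leq \|\1\ci{E}R^1_jR^2_k\1\ci{E}\|\ci{L^p(W)\to L^p(W)}\,\|f\|\ci{L^p(W)}\,\|g\|\ci{L^{p'}(W^{-p'/p})}.
\end{equation*}
Integrating $|\rho_\varepsilon(s)B_s(f,g)|$ against $\mathd s$ and using $\|\rho_\varepsilon\|\ci{L^1} = \|\rho\|\ci{L^1}$ produces \eqref{eq:WienerBilinearEstimate} with implicit constant $\|\rho\|\ci{L^1}$; optimizing over admissible $\rho$ gives the Wiener norm $\|\psi\|\ci{W_0(\R^{n+m})}$ as the natural constant.

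The only technical point requiring care is the initial Fubini interchange, but this is a soft matter rather than a genuine obstacle. Because $f$ and $g$ are compactly supported with $\dist(\supp f,\supp g) > 0$, the kernel $K_{jk}$ is bounded by some finite $M$ on $\supp f \times \supp g$, so the triple integrand is dominated by $|\rho_\varepsilon(s)|\,M\,|f(x)|\,|g(y)|$, which is absolutely integrable ($\rho_\varepsilon \in L^1$ and $f,g$ bounded with compact support). No cancellation is exploited; the whole argument is essentially an adaptation of the observation in \cite{liaw-treil} to the present biparameter, matrix-weighted context, and I do not anticipate any substantive difficulty beyond this routine bookkeeping.
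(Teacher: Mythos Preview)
Your proposal is correct and follows essentially the same route as the paper's proof: write $\psi=\widehat{\rho}$, absorb the anisotropic dilation into a rescaled $\rho_\varepsilon$ with the same $L^1$ norm, and use that modulating by $e^{-i\langle s,\fdot\rangle}$ leaves the matrix-weighted norms unchanged so that \eqref{eq:RieszLpBound} applies uniformly in $s$; integrating in $s$ against $|\rho_\varepsilon|$ then gives \eqref{eq:WienerBilinearEstimate} with constant $\Vert\rho\Vert\ci{L^1}$. The paper phrases the modulation step at the level of the kernel (``$K_{jk}(x,y)e^{-i\langle a,x-y\rangle}$ also satisfies \eqref{eq:RieszLpBound}'') rather than at the level of $f,g$, and is terser about Fubini, but the arguments are identical in substance.
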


\begin{proof}
  Since $K_{jk}(x,y)$ satisfies \eqref{eq:RieszLpBound}, so does the kernel $K_{jk}(x,y) e^{-i \langle a, x-y \rangle}$ for any $a \in \R^{n+m}.$
  In particular, taking $\rho \in L^1(\R^{n+m}),$ the kernel
  \begin{equation*}
    \fourier{\rho}(x-y) K_{jk}(x,y) = \int_{\R^{n+m}} \rho(a) K_{jk}(x,y) e^{-i \langle a, x-y \rangle} \mathd a
  \end{equation*}
  also satisfies \eqref{eq:RieszLpBound} (with $\Vert \rho \Vert\ci{L^1(\R^{n+m})}$ as a multiplicative constant).
  Now, consider $(x_1,x_2),(t_1,t_2) \in \R^{n+m}$ and observe that rescaling each variable separately we get
  \begin{equation*}
    \fourier{\rho}(t_1/\varepsilon_1, t_2/\varepsilon_2) = \varepsilon_1^n\varepsilon_2^m\fourier{\rho(\varepsilon_1x_1, \varepsilon_2x_2)}
  \end{equation*}
  and
  \begin{equation*}
    \int_{\R^n}\int_{\R^m} |\varepsilon_1^n\varepsilon_2^m \rho(\varepsilon_1x_1, \varepsilon_2x_2)| \mathd x_1 \mathd x_2 = \Vert \rho \Vert\ci{L^1(\R^{n+m})}.
  \end{equation*}
  Thus, letting $\rho$ to be such that $\psi = \fourier{\rho}$ and taking these observations into account, the kernel $\psi((x_1-y_1)/\varepsilon_1, (x_2-y_2)/\varepsilon_2) K_{jk}(x,y)$ satisfies equation \eqref{eq:RieszLpBound} (with the multiplicative constant $\Vert \rho \Vert\ci{L^1(\R^{n+m})}$), which is precisely \eqref{eq:WienerBilinearEstimate}.
\end{proof}

\begin{rem}
  Observe that if $W$ is a $d \times d$-matrix valued biparameter $A_2$ weight defined on $\R^{n+m},$ then the operators $R^1_jR^2_k$ (which are paraproduct-free) are bounded in $L^p(W)$ for $1 \leq j \leq n$ and $1 \leq k \leq m$ (see \cite{matrix journe}, and also \cite{Convex} in the case $p = 2$ and \cite{cruz-uribe-isralowitz-moen} in the case of general $p$ for the one-parameter results).
\end{rem}

The next lemma is just a reformulation of \cite[Lemma~3.2]{isralowitz-pott-treil} for it to be directly applicable to the biparameter setting.
We include its proof, which is elementary, for the reader's convenience.
\begin{lm}
  \label{lm:ModulusMultiplicationWiener}
  If $\varphi \in C^{\infty}\ti{c}(\R^{n+m}),$ then the three functions $|x|\varphi(x,y),$ $|y|\varphi(x,y)$ and $|x||y|\varphi(x,y)$ are in $W_0(\R^{n+m}).$
\end{lm}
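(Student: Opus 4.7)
The plan is to reduce the lemma to a one-variable statement via the algebra structure of the Wiener algebra, and then establish the one-variable statement by a Littlewood--Paley decomposition that exploits the extra $|x|$ (or $|y|$) factor.

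First I would use two standard facts: (i) $W_0(\R^{n+m})$ is a Banach algebra under pointwise multiplication, because the Fourier transform converts products into convolutions and $L^1\ast L^1\subseteq L^1$; and (ii) $W_0$ is closed under tensor products---if $\alpha\in W_0(\R^n)$ and $\beta\in W_0(\R^m)$, then $(x,y)\mapsto\alpha(x)\beta(y)\in W_0(\R^{n+m})$, since its Fourier transform factors as a tensor product of two $L^1$ functions and therefore lies in $L^1(\R^{n+m})$. Since every compactly supported smooth function is Schwartz and in particular lies in $W_0$, picking cutoffs $\eta_1\in C^\infty_c(\R^n)$ and $\eta_2\in C^\infty_c(\R^m)$ with $\eta_1(x)\eta_2(y)\equiv 1$ on $\supp\varphi$ allows me to factor
\begin{equation*}
|x|\varphi(x,y)=\bigl[|x|\eta_1(x)\bigr]\cdot\eta_2(y)\cdot\varphi(x,y),
\end{equation*}
with analogous factorizations for $|y|\varphi$ and $|x||y|\varphi$. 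It therefore suffices to prove the one-variable claim that $|x|\eta(x)\in W_0(\R^n)$ for an arbitrary $\eta\in C^\infty_c(\R^n)$ (and symmetrically for $|y|\tilde\eta(y)$ in $\R^m$).

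To handle this claim I would use a Littlewood--Paley-type decomposition adapted to the origin. Fixing $\psi\in C^\infty_c(\R^n)$ supported in $\{\tfrac12\leq|x|\leq 2\}$ with $\sum_{k\in\Z}\psi(2^k x)\equiv 1$ off the origin, and noting that $\eta$ is compactly supported, I would write
\begin{equation*}
|x|\eta(x)=\sum_{k\geq k_0}f_k(x),\qquad f_k(x):=|x|\,\psi(2^k x)\,\eta(x),
\end{equation*}
for some $k_0\in\Z$ depending only on $\supp\eta$. The rescaling $f_k(x)=2^{-k}g_k(2^k x)$, where $g_k(y):=|y|\psi(y)\eta(2^{-k}y)$, gives by a direct change of variable on the Fourier side
\begin{equation*}
\|\widehat{f_k}\|_{L^1(\R^n)}=2^{-k}\|\widehat{g_k}\|_{L^1(\R^n)}.
\end{equation*}
The functions $g_k$ are all supported in the fixed annulus $\{\tfrac12\leq|y|\leq 2\}$ and uniformly bounded in every Schwartz seminorm (when differentiating $\eta(2^{-k}y)$ only harmless additional factors of $2^{-k}$ arise); hence $\sup_k\|\widehat{g_k}\|_{L^1}<\infty$, and $\sum_k\|\widehat{f_k}\|_{L^1}$ converges as a geometric series of ratio $\tfrac12$. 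This yields $\widehat{|x|\eta}\in L^1(\R^n)$, i.e.\ $|x|\eta\in W_0(\R^n)$.

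The decisive quantitative point, and the only place where the structure of the problem is genuinely used, is the $2^{-k}$ gain per dyadic shell furnished by the factor $|x|\sim 2^{-k}$ on $\supp\psi(2^k\fdot)$; without it the rescaling would produce only a uniform bound $\|\widehat{f_k}\|_{L^1}\lesssim 1$, which fails to sum. Everything else---the reductions, the algebra and tensor-product properties of $W_0$, and the Fourier-scaling identity---is routine.
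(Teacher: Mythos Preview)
Your proof is correct, but it follows a genuinely different route from the paper's. The paper works directly in $\R^{n+m}$: writing $F(x,y)=|x||y|\varphi(x,y)$, it partitions $\R^{n+m}$ according to whether each coordinate has modulus $\leq 1$ or $\geq 1$, inserts monomial weights $|x^{\alpha}||y^{\beta}|$, applies H\"older with an exponent $1<\delta<\min\{1+1/(n+m-1),2\}$ and the Hausdorff--Young inequality, and finishes with the pointwise bound $|D_1^{\alpha}D_2^{\beta}F|\lesssim|(x,y)|^{1-|\alpha|-|\beta|}$ (which is where the gain from the factor $|x||y|$ is cashed in). Your approach instead exploits the Banach-algebra and tensor-product structure of $W_0$ to separate variables and reduce to the one-variable statement $|x|\eta\in W_0(\R^n)$, which you then handle by a dyadic decomposition near the origin; the $2^{-k}$ gain per shell plays the same role that the derivative estimate $|D^{\alpha}F|\lesssim|x|^{1-|\alpha|}$ does in the paper. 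Your argument is more modular and makes the biparameter structure transparently irrelevant, while the paper's argument is a single self-contained computation that avoids the reduction machinery; either way the analytic heart is the same regularity gain from the extra $|x|$ factor. One cosmetic point: in your factorization $|x|\varphi=[|x|\eta_1(x)]\cdot\eta_2(y)\cdot\varphi$, neither $[|x|\eta_1(x)]$ nor $\eta_2(y)$ alone lies in $W_0(\R^{n+m})$, so you should group the first two factors as the tensor product $[|x|\eta_1]\otimes\eta_2\in W_0(\R^{n+m})$ before invoking the algebra property.
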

\begin{proof}
  The same as in \cite[Lemma~3.2]{isralowitz-pott-treil}.
  We show first the case of $|x||y|\varphi(x,y),$ as the other two are treated similarly, and then we indicate the only difference for those.
  Let $F(x,y) = |x||y|\varphi(x,y)$ and take $1 < \delta < \min \{1+1/(n+m-1),2\}.$
  For $\alpha \in \{0,1\}^n$ and $\beta \in \{0,1\}^m$ define
  \begin{equation*}
    P_{\alpha\beta} = \{(x,y) \in \R^n\times\R^m\colon (-1)^{\alpha_j} |x_{\alpha_j}| \leq (-1)^{\alpha_j}, (-1)^{\beta_k} |y_{\beta_k}| \leq (-1)^{\beta_k}\}.
  \end{equation*}
  Then
  \begin{align*}
    \Vert \fourier{F} \Vert\ci{L^1(\R^{n+m})}
    &= \sum_{\alpha,\beta} \int_{P_{\alpha\beta}} |x^\alpha|^{-1} |y^\beta|^{-1} \left(|x^\alpha| |y^\beta| |\fourier{F}(x,y)|\right) \mathd x \mathd y\\
    &\leq \sum_{\alpha,\beta} \left(\int_{P_{\alpha\beta}} |x^\alpha|^{-\delta} |y^\beta|^{-\delta} \mathd x \mathd y\right)^{1/\delta}
                              \left(\int_{\R^{n+m}} |x^\alpha y^\beta \fourier{F}(x,y)|^{\delta'} \mathd x \mathd y\right)^{1/\delta'}\\
    &\lesssim \sum_{\alpha,\beta} \left(\int_{\R^{n+m}} |\fourier{D_1^\alpha D_2^\beta F}(x,y)|^{\delta'} \mathd x \mathd y\right)^{1/\delta'}\\
    &\lesssim \sum_{\alpha,\beta} \left(\int_{\R^{n+m}} |D_1^\alpha D_2^\beta F(x,y)|^{\delta}\right)^{1/\delta},
  \end{align*}
  where we used the Hausdorff--Young Inequality in the last step and the symbols $D_1^\alpha$ and $D_2^\beta$ to denote, respectively, the $\alpha$-partial on the first variable and the $\beta$-partial derivative on the second variable.
  The result now follows from the estimate
  \begin{equation*}
    |D_1^\alpha D_2^\beta F(x,y)| \lesssim |(x,y)|^{1-|\alpha|-|\beta|},
  \end{equation*}
  the fact that $\varphi$ is compactly supported and that $1 < \delta < 1+1/(n+m-1)$ and Fourier inversion.
  
  For the case of $|x|\varphi(x,y)$ and $|y|\varphi(x,y),$ the computation is exactly the same.
  The only difference is that at the end one has the estimates $|D_1^\alpha D_2^\beta F(x,y)| \lesssim |x|^{1-|\alpha|}$ and $|D_1^\alpha D_2^\beta F(x,y)| \lesssim |y|^{1-|\beta|}$ respectively.
  Thus, in these two cases it would be enough to take $\delta < 1 + 1/(n-1)$ and $\delta < 1 + 1/(m-1)$ accordingly.
\end{proof}

The last ingredient to prove Theorem \ref{thm:LowerBoundRiesz} is the following lemma, which gives a local lower bound in terms of reducing operators.
\begin{lm}
\label{lm:ReducingOperatorLowerBound}
  Let $W$ be a $d \times d$-matrix valued weight and assume that $R^1_jR^2_k$ are bounded in $L^p(W)$ for $1 \leq j \leq n$ and $1 \leq k \leq m.$
  If $R$ is a rectangle in $\R^{n+m}$ with sides parallel to the coordinate axes, then
  \begin{equation*}
    |\cW_R'\cW_R| \lesssim \max_{\substack{1\leq j\leq n\\ 1\leq k \leq m}} \Vert\1\ci{R}R^1_jR^2_k\1\ci{R}\Vert\ci{L^{p}(W)\rightarrow L^{p}(W)}.
  \end{equation*}
\end{lm}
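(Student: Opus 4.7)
The proof adapts to the biparameter setting the strategy used for the one-parameter analog in \cite{isralowitz-pott-treil}. The plan is to express a nonnegative tensor-product bump vanishing near the origin as a finite linear combination of products of Riesz-transform kernels with Wiener-algebra multipliers; invoking Lemma~\ref{lm:WienerAlgebraLemma} termwise then yields a bilinear estimate, and feeding in matrix-weighted test functions supported in two disjoint sub-rectangles of $R$ extracts the lower bound on $|\cW_R'\cW_R|$.

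Concretely, I would first pick nonnegative $\varphi\in C_c^{\infty}(\R^n)$ and $\psi\in C_c^{\infty}(\R^m)$ supported in annuli bounded away from the origin and each bounded below by a constant $c_0>0$ on a sub-annulus. Using $|u|^2=\sum_j u_j^2$, one has the pointwise identity
\[\varphi(u)\psi(v) = \sum_{j,k} K_j^{(n)}(u)\, K_k^{(m)}(v)\, M_{jk}(u,v),\qquad M_{jk}(u,v) := u_j v_k\,|u|^{n-1}|v|^{m-1} \varphi(u)\psi(v),\]
with $K_j^{(n)}(u)=u_j/|u|^{n+1}$, $K_k^{(m)}(v)=v_k/|v|^{m+1}$, and each $M_{jk}\in C_c^{\infty}(\R^{n+m})\subset W_0(\R^{n+m})$ (smoothness follows since $\varphi,\psi$ vanish near $0$). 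Writing $R=I\times J$ with $\ell_1=\ell(I)$, $\ell_2=\ell(J)$, and using the homogeneity $K_j^{(n)}(u/\ell_1)=\ell_1^n K_j^{(n)}(u)$, one rescales to
\[\tfrac{1}{\ell_1^n \ell_2^m}\,\varphi\bigl(\tfrac{x_1-y_1}{\ell_1}\bigr)\psi\bigl(\tfrac{x_2-y_2}{\ell_2}\bigr) = \sum_{j,k} K_{jk}(x,y)\, M_{jk}\bigl(\tfrac{x_1-y_1}{\ell_1},\tfrac{x_2-y_2}{\ell_2}\bigr).\]
Applying Lemma~\ref{lm:WienerAlgebraLemma} termwise with $E=R$, $\varepsilon_i=\ell_i$, and summing yields, for any $f\in L^p(W)$, $g\in L^{p'}(W^{-p'/p})$ with disjoint supports contained in $R$,
\[\Bigl|\int_R\int_R \varphi\bigl(\tfrac{x_1-y_1}{\ell_1}\bigr)\psi\bigl(\tfrac{x_2-y_2}{\ell_2}\bigr) \langle f(x),g(y)\rangle\, dx\, dy\Bigr| \lesssim |R|\max_{j,k}\|\1_R R^1_j R^2_k \1_R\|\cdot\|f\|_{L^p(W)}\|g\|_{L^{p'}(W^{-p'/p})}.\]

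To reach the desired lower bound, I would pick two disjoint sub-rectangles $R_\pm\subset R$ with $|R_\pm|\sim|R|$ so that $(x_i-y_i)/\ell_i$ lies in the sub-annuli where $\varphi,\psi\geq c_0$, uniformly for $x\in R_+$, $y\in R_-$; then $\varphi\bigl(\tfrac{x_1-y_1}{\ell_1}\bigr)\psi\bigl(\tfrac{x_2-y_2}{\ell_2}\bigr)\geq c_0^2$ on $R_+\times R_-$. Pick unit $e,e'\in\C^d$ realising $|\langle\cW_R'\cW_R e',e\rangle|\sim|\cW_R'\cW_R|$, and set $f(x):=\1_{R_+}(x)W(x)^{-1/p}e'$, $g(y):=\1_{R_-}(y)W(y)^{1/p}e$; direct computation gives $\|f\|_{L^p(W)}\sim|R|^{1/p}$ and $\|g\|_{L^{p'}(W^{-p'/p})}\sim|R|^{1/p'}$. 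Substituting into the bilinear estimate, the left-hand side equals
\[\Bigl|\int_{R_+}\int_{R_-} \varphi\bigl(\tfrac{x_1-y_1}{\ell_1}\bigr)\psi\bigl(\tfrac{x_2-y_2}{\ell_2}\bigr)\,\langle e',W(x)^{-1/p}W(y)^{1/p}e\rangle\, dx\, dy\Bigr|,\]
so the lemma reduces to bounding this from below by $c|R|^2|\cW_R'\cW_R|$.

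The main obstacle is exactly this last step: the complex-valued matrix integrand $\langle e',W(x)^{-1/p}W(y)^{1/p}e\rangle$ could in principle cancel against the positive bump. This is handled as in the one-parameter argument of \cite{isralowitz-pott-treil}: using $\varphi\psi\geq c_0^2$ on $R_+\times R_-$, the integral dominates (up to a controlled error) the factored quantity $c_0^2|R_+||R_-|\,|\langle e',\langle W^{-1/p}\rangle_{R_+}\langle W^{1/p}\rangle_{R_-}e\rangle|$; Lemma~\ref{l: replace inverse by prime}(2) identifies $\langle W^{-1/p}\rangle_{R_\pm}\sim\cW_{R_\pm}'$ and $\langle W^{1/p}\rangle_{R_\pm}\sim\cW_{R_\pm}$, and comparability of reducing operators on $R$ and its sub-rectangles of proportional size (a consequence of Lemma~\ref{l: trivial inclusion reducing operators} together with the $A_p$ hypothesis keeping local characteristics bounded) yields the required lower bound for the optimal choice of $e,e'$. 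The biparameter extension introduces no genuine new difficulty because the bump factors as a tensor product in the two scales and the matrix-algebraic core of the argument is inherently dimension-free.
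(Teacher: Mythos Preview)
Your decomposition of a compactly supported bump into Riesz kernels times Wiener-algebra multipliers is fine, and the application of Lemma~\ref{lm:WienerAlgebraLemma} goes through. The genuine gap is in the final extraction step, which you yourself flag and then dismiss too quickly.

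First, the ``up to a controlled error'' claim is not justified. If the scalar integrand $\langle e',W(x)^{-1/p}W(y)^{1/p}e\rangle$ were nonnegative, the pointwise bound $\varphi\psi\ge c_0^2$ would indeed give a lower bound by the averaged quantity; but it is complex-valued and may oscillate, so the variable factor $\varphi\psi$ can interact with it destructively. There is no general inequality of the form $\bigl|\int \rho\, h\bigr|\ge c\,\bigl|\int h\bigr|$ when $\rho\ge c>0$ and $h$ is complex. Second, even if you could replace $\varphi\psi$ by a constant, you would be left with $\bigl|\langle e',\langle W^{-1/p}\rangle_{R_+}\langle W^{1/p}\rangle_{R_-}e\rangle\bigr|$; the comparabilities you invoke from Lemma~\ref{l: replace inverse by prime}(2) and Lemma~\ref{l: trivial inclusion reducing operators} are \emph{operator-norm} statements of the form $|Ae|\sim|Be|$ for all $e$, and these do not transfer to a fixed bilinear pairing $\langle e',Ae\rangle$ for one specific pair $(e,e')$. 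Third, both of those lemmas require control of the local quantity $C_E$ (essentially the local $A_p$ characteristic), and your appeal to ``the $A_p$ hypothesis keeping local characteristics bounded'' is not available: the present lemma assumes only boundedness of the $R^1_jR^2_k$ on $L^p(W)$, not $W\in A_p$.

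The paper's proof sidesteps all of this by a cleaner choice of bump: it takes a single $\varphi\in C_c^\infty(\R^{n+m})$ with $\varphi\equiv 1$ on $[-2,2]^{n+m}$ (so \emph{not} vanishing near the origin), and uses Lemma~\ref{lm:ModulusMultiplicationWiener} to show that the resulting multipliers $x_jy_k|x|^{n-1}|y|^{m-1}\varphi^2(x,y)$ still lie in $W_0(\R^{n+m})$ despite the singularity at $0$. Because $\varphi\equiv 1$ on the relevant set, the kernel $|R|^{-1}\1_{R\times R}$ of the full averaging operator $A_R$ is recovered exactly, and Lemma~\ref{lm:WienerAlgebraLemma} gives $\|A_R\|_{L^p(W)\to L^p(W)}\lesssim\max_{j,k}\|\1_R R^1_jR^2_k\1_R\|$. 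The conclusion then follows from the known, hypothesis-free equivalence $|\cW_R'\cW_R|\sim\|A_R\|_{L^p(W)\to L^p(W)}$ (\cite[Lemma~3.2]{matrix journe}), with no need to choose test vectors or control cancellation.
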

\begin{proof}
  Write $R = I \times J$ and let $\varepsilon_1, \varepsilon_2$ be the side-lengths of $I$ and $J$ respectively.
  Let $\varphi \in C^\infty\ti{c}(\R^{n+m})$ be such that $\varphi(x,y) = 1$ if $(x,y) \in [-2,2]^{n+m}.$
  Fix $1 \leq j \leq n$ and $1 \leq k \leq m.$
  Note that if both $n$ and $m$ are odd, then $x_j y_k |x|^{n-1} |y|^{m-1} \varphi^2(x,y) \in C^\infty\ti{c}(\R^{n+m}) \subseteq W_0(\R^{n+m}).$
  On the other hand, if both $n$ and $m$ are even, then $x_j y_k |x|^{n-2} |y|^{m-2} \varphi(x,y) \in C^\infty\ti{c}(\R^{n+m}) \subseteq W_0(\R^{n+m}),$ so that Lemma \ref{lm:WienerAlgebraLemma} implies that $x_j y_k |x|^{n-1} |y|^{m-1} \varphi^2(x,y) \in W_0(\R^{n+m}).$
  The case in which $n$ is even and $m$ is odd (and vice versa) is also treated with Lemma \ref{lm:WienerAlgebraLemma} to get that $x_j y_k |x|^{n-1} |y|^{m-1} \varphi^2(x,y) \in W_0(\R^{n+m}).$
  Next, observe that we can rewrite the averaging operator kernel as
  \begin{align*}
    \frac{\1_{R\times R}((x,y),(x',y'))}{|R|}
    &\sim \varepsilon_1^{-n} \varepsilon_2^{-m} \varphi\left(\frac{x-x'}{\varepsilon_1},\frac{y-y'}{\varepsilon_2}\right) \1_{R\times R}((x,y),(x',y'))\\
    &= \left(\sum_{j,k} \frac{x_j-x_j'}{\varepsilon_1} \frac{y_k-y_k'}{\varepsilon_2} \left|\frac{x-x'}{\varepsilon_1}\right|^{n-1} \left|\frac{y-y'}{\varepsilon_2}\right|^{m-1} K_{jk}((x,y),(x',y'))\right)\\
    &\times \varphi^2\left(\frac{x-x'}{\varepsilon_1},\frac{y-y'}{\varepsilon_2}\right) \1_{R\times R}((x,y),(x',y')),
  \end{align*}
  so that each of the terms in this sum satisfies \eqref{eq:WienerBilinearEstimate}.
  Hence, for any compactly supported $f \in L^2 \cap L^p(W)$ and $g \in L^2 \cap L^{p'}(W^{-p'/p}),$ the averaging operator satisfies
  \begin{equation*}
    \left|(A_R f, g )\right| \lesssim \max_{\substack{1\leq j\leq n\\ 1\leq k \leq m}} \Vert\1\ci{R}R^1_jR^2_k\1\ci{R}\Vert\ci{L^{p}(W)\rightarrow L^{p}(W)} \Vert f \Vert\ci{L^{p}(W)} \Vert g \Vert\ci{L^{p'}(W^{-p'/p})}.
  \end{equation*}
  Using a standard density argument, it follows that
  \begin{equation*}
    |\cW_R'\cW_R| \sim \Vert A_R \Vert\ci{L^{p}(W)\rightarrow L^{p}(W)} \lesssim \max_{\substack{1\leq j\leq n\\ 1\leq k \leq m}} \Vert\1\ci{R}R^1_jR^2_k\1\ci{R}\Vert\ci{L^{p}(W)\rightarrow L^{p}(W)},
  \end{equation*}
  where in the $\sim$ we applied \cite[Lemma 3.2]{matrix journe}.
\end{proof}

\begin{proof}[Proof of Theorem \ref{thm:LowerBoundRiesz}]
  Assume that the right-hand side of \eqref{eq:RieszLowerBound} is finite, since otherwise it is trivial.
  For a matrix valued weight $W$ and a rectangle $R\subseteq\R^n\times\R^m,$ let us denote
  \begin{equation*}
    [W]\ci{A_p,R} \coloneq \strokedint_{R}\left(\strokedint_{R}|W(x)^{1/p}W(y)^{-1/p}|^{p'}\mathd y \right)^{p/p'}\mathd x,
  \end{equation*}
  and for a matrix valued function $B \in L^1\ti{loc}(\R^{n+m};M_{d}(\C))$ let us denote
  \begin{equation*}
    \Vert B\Vert\ci{\wt{\text{bmo}}^1(U,V,p),R} \coloneq \left(\strokedint_{R}\left(\strokedint_{R}|V(x)^{1/p}(B(x)-B(y))U(y)^{-1/p}|^{p'}\mathd y\right)^{p/p'}\mathd x\right)^{1/p}.
  \end{equation*}
  Define the $2d \times 2d$-matrix valued function
  \begin{equation*}
    \Phi = \begin{pmatrix}
             V^{1/p} & V^{1/p}B \\
             0       & U^{1/p}
           \end{pmatrix},
  \end{equation*}
  that has a.e. inverse
  \begin{equation*}
    \Phi^{-1} = \begin{pmatrix}
                  V^{-1/p} & -BU^{-1/p} \\
                  0        & U^{-1/p}
                  \end{pmatrix}
  \end{equation*}
  and let $W^{1/p} = (\Phi^\ast \Phi)^{1/2}.$
  By the polar decomposition we have that $\Phi = \cU W^{1/p},$ where $\cU$ is an a.e. unitary matrix.
  Note that a direct computation shows that
  \begin{align*}
    \strokedint_R \bigg(\strokedint_R &\Vert W^{1/p}(x)W^{-1/p}(y) \Vert^{p'}\mathd y\bigg)^{p/p'} \mathd x\\
    &\sim [V]_{A_p,R} + [U]_{A_p,R} + \Vert B\Vert\ci{\wt{\text{bmo}}^1(U,V,p),R}^p.
  \end{align*}
  Now, use that for a linear operator $T$ acting on $\C^d$-valued functions it happens that
   \begin{equation*}
    \Phi (T \otimes I_2) \Phi^{-1} = \begin{pmatrix}
                                       V^{1/p}(T \otimes I_2)V^{-1/p} & V^{1/p}[B,T]U^{-1/p} \\
                                       0                              & U^{1/p}(T \otimes I_2)U^{-1/p}
                                     \end{pmatrix},
  \end{equation*}
  so that by Lemma \ref{lm:ReducingOperatorLowerBound} we obtain
  \begin{align*}
    \big([V]_{A_p,R} + [U]_{A_p,R} &+ \Vert B\Vert\ci{\wt{\text{bmo}}^1(U,V,p),R}^p\big)^{1/p}\\
    &\sim [W]_{A_p,R}^{1/p} \sim |\cW_R'\cW_R|\\
    &\lesssim \max_{\substack{1\leq j\leq n\\ 1\leq k \leq m}} \Vert\1\ci{R}R^1_jR^2_k\1\ci{R}\Vert\ci{L^{p}(W)\rightarrow L^{p}(W)}\\
    &\leq \max_{\substack{1\leq j\leq n\\ 1\leq k \leq m}} \Vert W^{1/p}R^1_jR^2_kW^{-1/p} \Vert\ci{L^{p}\rightarrow L^{p}}\\
    &= \max_{\substack{1\leq j\leq n\\ 1\leq k \leq m}} \Vert \Phi R^1_jR^2_k\Phi^{-1} \Vert\ci{L^{p}\rightarrow L^{p}}\\
    &\lesssim \max_{\substack{1\leq j\leq n\\ 1\leq k \leq m}} \big(\Vert[B,R^1_jR^2_k]\Vert\ci{L^{p}(U)\rightarrow L^{p}(V)}\\
              &+ \Vert R^1_jR^2_k \Vert\ci{L^{p}(U)\rightarrow L^{p}(U)} + \Vert R^1_jR^2_k \Vert\ci{L^{p}(V)\rightarrow L^{p}(V)}\big).
  \end{align*}
  Since both $U$ and $V$ are biparameter $A_p$ weights, all quantities above are finite.
  Thus, rescaling $B \mapsto rB,$ dividing by $r$ and letting $r \to \infty$ we get
  \begin{equation*}
    \Vert B\Vert\ci{\wt{\text{bmo}}^1(U,V,p),R} \lesssim \max_{\substack{1\leq j\leq n\\ 1\leq k \leq m}} \Vert[B,R^1_jR^2_k]\Vert\ci{L^{p}(U)\rightarrow L^{p}(V)}.
  \end{equation*}
  Finally, taking the supremum over all rectangles and using Corollary \ref{c: equivalences of little bmo}, the result follows.
\end{proof}

\section{Matrix-weighted little BMO upper bounds}
\label{s: upper matrix little BMO bounds}

In this section, we briefly explain how one can get upper bounds in terms of matrix-weighted little BMO norms for commutators with (a wide classes of) Journ\'e operators. The main tool we need is the following lemma. It is the natural biparameter counterpart of \cite[Lemma 1.3]{isralowitz-pott-treil} due to Isralowitz--Pott--Treil. Since the proof in the present two-parameter setting is exactly the same as in the one-parameter setting of \cite{isralowitz-pott-treil} (up to replacing the word ``cube'' with the word ``rectangle''), we omit it and instead refer the reader to \cite[Section 2]{isralowitz-pott-treil}.

\begin{lm}
\label{l: upper bounds}
Let $T$ be a linear operator acting on scalar-valued functions on $\R^n\times\R^m$. Let $1<p<\infty$. Assume that there exists an increasing function $\phi:[0,\infty)\rightarrow[0,\infty)$ (possibly depending on $T,n,m,p,d$) such that for all biparameter $(2d)\times(2d)$-matrix valued $A_p$ weights $W$ on $\R^n\times\R^m$, the canonical vector-valued extension of $T$ (denoted again by $T$) satisfies
\begin{equation*}
\Vert T\Vert\ci{L^{p}(W)\rightarrow L^{p}(W)}\leq\phi([W]\ci{A_p(\R^n\times\R^m)}).
\end{equation*}
Then, for all biparameter $d\times d$-matrix valued $A_p$ weights $U,V$ on $\R^n\times\R^m$ and for all $B\in L^1\ti{loc}(\R^{n+m};M_{d}(\C))$ there holds
\begin{equation*}
\Vert[T,B]\Vert\ci{L^{p}(U)\rightarrow L^{p}(V)}\leq\Vert B\Vert\ci{\emph{bmo}^1(U,V,p)}\phi(3^{p-1}([U]\ci{A_p(\R^n\times\R^m)}+[V]\ci{A_p(\R^n\times\R^m)}+1)).
\end{equation*}
\end{lm}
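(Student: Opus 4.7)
My plan is to run the tensorization trick of Isralowitz--Pott--Treil (already used in the proof of Theorem \ref{thm:LowerBoundRiesz}): I will package the data $U$, $V$, $B$ into a single $(2d)\times(2d)$-matrix valued biparameter weight $W$ whose $A_p$-characteristic is controlled, and then extract the commutator bound directly from the assumed $L^{p}(W)$-bound for the vector-valued extension of $T$.

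Concretely, I would set
\[
\Phi := \begin{pmatrix} V^{1/p} & V^{1/p}B \\ 0 & U^{1/p} \end{pmatrix},\qquad \Phi^{-1} = \begin{pmatrix} V^{-1/p} & -BU^{-1/p} \\ 0 & U^{-1/p} \end{pmatrix},
\]
and define $W:=(\Phi^{\ast}\Phi)^{p/2}$. By polar decomposition $\Phi=\cU W^{1/p}$ with $\cU$ a.e.~unitary, so $|W^{1/p}(x)e|=|\Phi(x)e|$ for every $e\in\C^{2d}$ and a.e.~$x$; in particular $\Vert G\Vert\ci{L^p(W)}=\Vert\Phi G\Vert\ci{L^{p}}$ for every $\C^{2d}$-valued $G$, and $|W^{1/p}(x)W^{-1/p}(y)|=|\Phi(x)\Phi(y)^{-1}|$ for a.e.~$x,y$.

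The bulk of the work lies in the $A_p$ estimate for $W$. A direct block computation gives
\[
\Phi(x)\Phi(y)^{-1} = \begin{pmatrix} V(x)^{1/p}V(y)^{-1/p} & V(x)^{1/p}(B(x)-B(y))U(y)^{-1/p} \\ 0 & U(x)^{1/p}U(y)^{-1/p} \end{pmatrix},
\]
whose operator norm is at most the sum of the three nonzero block norms. Raising this to the $p'$-th power, averaging in $y$ over a rectangle $R$, then raising to $p/p'$ and averaging in $x$, the three resulting terms are precisely $[V]\ci{A_p,R}$, $\Vert B\Vert\ci{\text{bmo}^{1}(U,V,p),R}^{p}$ and $[U]\ci{A_p,R}$. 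Careful bookkeeping of the constants from the two successive applications of $(a+b+c)^{s}\le 3^{s-1}(a^{s}+b^{s}+c^{s})$ (for $s=p'$ and $s=p/p'$), using $(p'-1)\cdot(p/p')=1$, produces
\[
[W]\ci{A_p(\R^n\times\R^m)} \leq 3^{p-1}\bigl([U]\ci{A_p(\R^n\times\R^m)}+[V]\ci{A_p(\R^n\times\R^m)}+\Vert B\Vert\ci{\text{bmo}^{1}(U,V,p)}^{p}\bigr).
\]

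With $W$ controlled, the extraction is short. Given $f\in L^{p}(U)$ taking values in $\C^{d}$, I would set $F:=(-Bf,\, f)^{\top}\in\C^{2d}$. A direct calculation yields $\Phi F=(0,\, U^{1/p}f)^{\top}$, hence $\Vert F\Vert\ci{L^p(W)}=\Vert f\Vert\ci{L^p(U)}$; and since $T\otimes I_{2}$ acts componentwise on the two $\C^{d}$-valued components,
\[
\Phi\cdot(T\otimes I_{2})F = \begin{pmatrix} V^{1/p}\bigl(BTf-T(Bf)\bigr) \\ U^{1/p}Tf \end{pmatrix} = \begin{pmatrix} -V^{1/p}[T,B]f \\ U^{1/p}Tf \end{pmatrix},
\]
which immediately gives $\Vert [T,B]f\Vert\ci{L^p(V)}\leq\Vert(T\otimes I_{2})F\Vert\ci{L^p(W)}\leq\Vert T\Vert\ci{L^p(W)\rightarrow L^p(W)}\Vert f\Vert\ci{L^p(U)}$. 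Combining with the hypothesis and rescaling $B\mapsto B/\Vert B\Vert\ci{\text{bmo}^{1}(U,V,p)}$ (which we may assume finite, else the conclusion is vacuous) turns the $\Vert B\Vert^{p}$ inside the $A_p$ estimate into a harmless $+1$ inside $\phi$, while producing the prefactor $\Vert B\Vert\ci{\text{bmo}^{1}(U,V,p)}$ out front. The only delicate point is the bookkeeping of the constant $3^{p-1}$ in the $A_p$ estimate; the remainder of the argument is linear algebra plus the hypothesis on $T$.
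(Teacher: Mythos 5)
Your strategy is exactly the tensorization trick of Isralowitz--Pott--Treil, which is also what the paper relies on: the authors do not give a proof of Lemma~\ref{l: upper bounds} at all, but simply state that it is the biparameter counterpart of \cite[Lemma 1.3]{isralowitz-pott-treil} with the identical proof (``cube'' replaced by ``rectangle'') and refer to \cite[Section 2]{isralowitz-pott-treil}. The construction of $\Phi$, the identification $W^{1/p}=(\Phi^{\ast}\Phi)^{1/2}$, the block computation of $\Phi(x)\Phi(y)^{-1}$, the choice $F=(-Bf,f)^{\top}$ and the final rescaling $B\mapsto B/\Vert B\Vert_{\text{bmo}^{1}}$ are all correct and constitute a legitimate fill-in of the omitted argument. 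The key identity $\Phi\cdot(T\otimes I_{2})F=(-V^{1/p}[T,B]f,\;U^{1/p}Tf)^{\top}$ is verified correctly.

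There is, however, a small gap in the bookkeeping of the $A_p$ characteristic of $W$. You apply $(a+b+c)^{s}\leq 3^{s-1}(a^{s}+b^{s}+c^{s})$ twice, first with $s=p'$ and then with $s=p/p'=p-1$. The second application is valid only when $p/p'\geq 1$, i.e.~$p\geq 2$; for $1<p<2$ one has $p/p'<1$, the inequality reverses, and what is actually available is the subadditivity $(x_1+x_2+x_3)^{s}\leq x_1^{s}+x_2^{s}+x_3^{s}$ with constant $1$. Following your route one therefore gets the constant $3$ for $1<p<2$ rather than the claimed $3^{p-1}<3$. The clean way to obtain $3^{p-1}$ uniformly in $1<p<\infty$ is to use Minkowski's inequality twice before any power-mean step: apply the triangle inequality in $L^{p'}(R,\mathd y)$ to $a_1+a_2+a_3$ (exponent $p'\geq 1$), then the triangle inequality in $L^{p}(R,\mathd x)$ (exponent $p\geq 1$), arriving at
\[
\left(\strokedint_{R}\left(\strokedint_{R}(a_1+a_2+a_3)^{p'}\mathd y\right)^{p/p'}\mathd x\right)^{1/p}
\leq \sum_{i=1}^{3}\left(\strokedint_{R}\left(\strokedint_{R}a_i^{p'}\mathd y\right)^{p/p'}\mathd x\right)^{1/p},
\]
and only then apply $(\sum_{i}x_i)^{p}\leq 3^{p-1}\sum_{i}x_i^{p}$ once, which holds for all $p\geq 1$. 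This yields $[W]_{A_p}\leq 3^{p-1}([U]_{A_p}+[V]_{A_p}+\Vert B\Vert_{\text{bmo}^{1}}^{p})$ exactly as claimed. The remainder of your argument is unaffected.
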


Using Lemma \ref{l: upper bounds} coupled with Corollary \ref{c: equivalences of little bmo} and already known weighted estimates from \cite{vuorinen}, we can deduce the main result.

\theoremstyle{plain}
\newtheorem*{thm_MatrixBMOUpperBound}{Theorem~\ref{thm:MatrixBMOUpperBound}}
\begin{thm_MatrixBMOUpperBound}
Let $T$ be any Journ\'e operator on $\R^n\times\R^m$. Let $1<p<\infty$, let $U,V$ be biparameter $d\times d$-matrix valued $A_p$ weights on $\R^n\times\R^m$, and let $B\in L^1\ti{loc}(\R^{n+m};M_d(\C))$. Then
\begin{equation*}
\Vert[T,B]\Vert\ci{L^{p}(U)\rightarrow L^{p}(V)}\lesssim\Vert B\Vert\ci{\emph{bmo}(U,V,p)},
\end{equation*}
where the implied constant depends only on $T,n,m,p,d,[U]\ci{A_p(\R^n\times\R^m)}$ and $[V]\ci{A_p(\R^n\times\R^m)}$.
\end{thm_MatrixBMOUpperBound}

\end{document}